\title[Witt groups of abelian categories and perverse sheaves]
{Witt groups of abelian categories and \\perverse sheaves}
\author[J. Sch\"urmann ]{J\"org Sch\"urmann}
\address{J.  Sch\"urmann : Mathematische Institut,
          Universit\"at M\"unster,
          Einsteinstr. 62, 48149 M\"unster,
          Germany.}
\email {jschuerm@math.uni-muenster.de}
\author[J. Woolf]{Jon  Woolf}
\address{J.  Woolf : Department of Mathematical Sciences,
          University of Liverpool,
          Liverpool, L69 7ZL,
          United Kingdom.}
\email {Jonathan.Woolf@liverpool.ac.uk }
\newtheorem{theorem}{Theorem}[section]
\newtheorem{proposition}[theorem]{Proposition}
\newtheorem{corollary}[theorem]{Corollary}
\newtheorem{lemma}[theorem]{Lemma}
\theoremstyle{definition}
\newtheorem{definition}[theorem]{Definition}
\newtheorem{example}[theorem]{Example}
\newtheorem{examples}[theorem]{Examples}
\newtheorem{remark}[theorem]{Remark}
\newtheorem{remarks}[theorem]{Remarks}
\newcommand{\ie}{i.e.\ }
\newcommand{\eg}{e.g.\ }
\newcommand{\Z}{\mathbb{Z}}
\newcommand{\Q}{\mathbb{Q}}
\newcommand{\R}{\mathbb{R}}
\newcommand{\C}{\mathbb{C}}
\newcommand{\F}{\mathbb{F}}
\renewcommand{\P}{\mathbb{P}}
\renewcommand{\L}{\mathbb{L}}
\newcommand{\strat}{\mathbb{S}}
\newcommand{\im}{\mathrm{im}}
\newcommand{\cat}[1]{\mathrm{#1}}
\newcommand{\perv}[1]{\cat{Perv}(#1)}
\newcommand{\pervc}[2]{\cat{Perv}_{#1}(#2)}
\newcommand{\perva}[1]{\pervc{\text{alg}}{#1}}
\newcommand{\loc}[1]{\cat{Loc}(#1)}
\newcommand{\constr}[1]{\cat{D}^b_c(#1)}
\newcommand{\plconstr}[1]{\cat{D}^b_{pl-c}(#1)}
\newcommand{\sh}[1]{\mathcal{#1}}
\newcommand{\id}{\text{id}}
\newcommand{\mor}[2]{{\mathrm{Hom}}(#1,#2)}
\newcommand{\ext}[3]{{\mathrm{Ext}^{#1}}(#2,#3)}
\newcommand{\ih}[2]{I\!H^{#1}\!\left(#2\right)}
\newcommand{\ic}[1]{\mathcal{I}\mathcal{C}({#1})}
\newcommand{\shcoh}[2]{H^{#1}(#2)}
\newcommand{\pf}[1]{{}^p\!{#1}}
\newcommand{\pt}{{\text{pt}}}
\newcommand{\ME}{\Xi^\text{un}_f}
\newcommand{\VC}{\Phi^\text{un}_f}
\newcommand{\NC}{\Psi^\text{un}_f}
\newcommand{\wg}[2]{\widetilde{\rm W}_{#1}(#2)}
\newcommand{\wgc}[2]{{\rm W}_{#1}(#2)}
\newcommand{\mw}[2]{\widetilde{\rm MW}_{#1}(#2)}
\newcommand{\mwc}[2]{{\rm MW}_{#1}(#2)}
\newcommand{\oc}[2]{#1^{#2}}
\newcommand{\ir}[2]{#1 \lhd #2}
\newcommand{\defn}[1]{\emph{#1}}
\newcommand{\codim}{\mathrm{codim}\, }
\newcommand{\mono}{\hookrightarrow}
\newcommand{\epi}{\twoheadrightarrow}
\newcommand{\coker}{\mathrm{coker}\, }
\renewcommand{\im}{\mathrm{im}\, }
\newcommand{\coim}{\mathrm{coim}\, }
\newcommand{\colim}{\mathrm{colim}\, }
\newcommand{\rep}[1]{ \mathrm{Rep}\left( #1 \right) }
\begin{document}

\begin{abstract} 
In this paper we study the Witt groups $W_{\pm}(\perv{X})$ of perverse sheaves on a finite-dimensional topologically stratified space $X$ with even dimensional strata.
 We show that $W_{\pm}(\perv{X})$ has a canonical decomposition as a direct sum of the Witt groups of shifted local systems on strata. We compare this with another `splitting decomposition' for  Witt classes of perverse sheaves obtained inductively from our main new tool, a `splitting relation' which is a generalisation of isotropic reduction. 
 
 The Witt groups $W_{\pm}(\perv{X})$ are identified with the (non-trivial) Balmer--Witt groups of the constructible derived category $\constr{X}$ of sheaves on $X$, and also with the corresponding cobordism groups defined by Youssin.

Our methods are primarily algebraic and apply more widely. The general context in which we work is that of a triangulated category with duality, equipped with a self-dual $t$--structure with noetherian heart, glued from self-dual $t$--structures on a thick subcategory and its quotient. 
\end{abstract}

\maketitle

\section{Introduction}

 The signature of a compact, oriented manifold is a basic topological invariant. It is an obstruction to the existence of a null-bordism, and plays a key role in surgery theory and the classification of manifolds. The signature can be extended to singular spaces by using intersection cohomology --- a compact Witt space $W$ is a space whose rational intersection cohomology satisfies Poincar\'e duality and $\sigma(W)$ is defined to be the signature of the associated intersection form. { For example any irreducible complex analytic or algebraic variety  is a Witt space.} A more refined invariant is the Witt class $w(W)$ of the intersection form in the rational Witt group $\wgc{}{\Q}$. This determines the signature but also contains torsion information which is localised on the singularities of the space. For manifolds, and more generally for spaces with integral Poincar\'e duality such as integral homology manifolds and intersection Poincar\'e spaces \cite{MR1057240}, this torsion information vanishes and the Witt class is simply the signature. The Witt class is the obstruction to the existence of a Witt null-bordism \cite{siegel}. It plays an analogous r\^ole in stratified surgery theory and the classification of stratified spaces to that played by the signature for manifolds.

In this paper we study the Witt group $W(\perv{X})$ of perverse sheaves. Here $X$ is a finite-dimensional topologically stratified space with even dimensional strata, 
and $\perv{X}$ the category of perverse sheaves, constructible with respect to the stratification, with rational coefficients. A proper stratified map  $f\colon W \to X$ from a Witt space $W$ { with $\dim W \equiv 0 \:(4)$} determines a class {  $[f_*I_W]\in W(\perv{X})$} whose pushforward to $W(\perv{\pt}) \cong W(\Q)$ { for $X$ compact} is $w(W)$.
{ Here 
\[
I_W\colon \ic{W} \to D \ic{W}
\]
 is the symmetric intersection form of the corresponding intersection cohomology complex of $W$, with $D$ the Verdier duality for constructible sheaf complexes.}
 Thus $W(\perv{X})$ is the natural home for relative invariants of spaces over $X$.

The category $\perv{X}$ is constructed by `gluing together' categories of shifted local systems on the strata of $X$. As a consequence $W(\perv{X})$ decomposes as a direct sum of the Witt groups of shifted local systems --- see Corollary \ref{witt decomposition theorem}. We refer to the associated decomposition of a class as the \defn{canonical decomposition}. 
In \S\ref{stratified spaces} we give an algorithm, { starting from a top-dimensional open stratum}, see (\ref{direct sum formula}), for computing the canonical decomposition of a class. The algorithm relies on the ability to identify maximal isotropic subobjects of forms on local systems, so its feasibility depends on the complexity of the fundamental groups of the strata of $X$.  { We are also interested in the structure of the Witt group $W(\perv{X})$ itself, which by the above can be reduced to the simpler and more classical case of Witt groups of  local systems, for instance see Example~\ref{ex:Bunke-Ma} for the case of real coefficients and all strata orientable. If for example all strata $S$ of $X$ are simply connected, therefore orientable, then Corollary \ref{witt decomposition theorem} implies
\begin{equation}\label{simply-connected}
W(\perv{X}) \cong \bigoplus_{S \colon \dim S \equiv 0\:(4)} W(\Q).
\end{equation}
In particular $W(\perv{X}) = 0$ when all strata $S$ of $X$ are simply connected with $\dim S \equiv 2\:(4)$. In the above mapping situation this implies $[f_*I_W]=0$ and hence $w(W)=0\in W(\Q)$. See \cite[Theorem~6.1]{MR1127478} for the corresponding vanishing of the signature $\sigma(W)$.\\}

Cappell and Shaneson state an expression for a Witt class as a sum of classes of forms on intersection cohomology complexes  \cite[Theorem 2.1]{cs}, see \cite[Chapter 8]{Banagl}. To be a little more precise, they obtain a decomposition for a class in { their cobordism group $\Omega_{CS}(X)$  of symmetric} self-dual complexes, but we show the latter is isomorphic to the Witt group of perverse sheaves --- see Proposition \ref{abelian-triangulated relation} and Corollary \ref{witt-cobordism relation} { : 
\[
W(\perv{X})  \cong  \Omega_{CS}(X) \:.
\]
}
They view this decomposition as an up-to-cobordism topological analogue of the { following famous Decomposition Theorem:
\begin{theorem}\label{alg-decomposition-theorem} Let $f: W\to X$ be a proper stratified morphism of complex algebraic varieties, with $W$ irreducible and all strata $S$ of $X$ also complex algebraic. Then
\begin{enumerate}
\item (Decomposition:) $Rf_*\ic{W}\cong \oplus_i \: {}^pR^if_*\ic{W}$ is isomorphic to the direct sum of the corresponding perverse direct image sheaves.
\item (Strict support:) Each perverse direct image sheaf for $i\in \Z$ is a direct sum  ${}^pR^if_*\ic{W}\cong  \oplus_S$  $\ic{\overline{S};\mathcal{L}_{i,S}}$ of twisted intersection cohomology sheaf complexes on the closures $\overline{S}$ of the strata $S$.
\item (Semisimplicity:) The local system $\mathcal{L}_{i,S}$ on $S$ is semisimple for all $i$ and $S$.
\end{enumerate}
\end{theorem}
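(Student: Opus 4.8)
This is the Beilinson--Bernstein--Deligne--Gabber Decomposition Theorem; the plan is to follow its proof via weights in positive characteristic, with the Hodge-theoretic proof of de Cataldo--Migliorini sketched at the end as an alternative route. I would begin with the standard reductions. By Chow's lemma (and factoring $f$) one reduces to the case $f$ projective. Then one spreads out: choose a finitely generated $\Z$-subalgebra $R\subset\C$ over which $W$, $X$, $f$ and the stratification are all defined; it suffices to establish the corresponding decomposition for $\ell$-adic perverse sheaves after base change to the closed points of $\mathrm{Spec}\,R$, i.e.\ over fields $\overline{\F_q}$, for $q$ ranging over a dense set of closed points. The three assertions concern the vanishing or splitting of explicit maps between finite-dimensional $\mathrm{Hom}$-groups, and these form constructible families over $\mathrm{Spec}\,R$; a decomposition valid at a dense set of closed points therefore propagates to the generic geometric point, and from there back to $\constr{X}$ with $\Q$-coefficients in the classical topology by Artin's comparison theorem.

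Over $\overline{\F_q}$ the essential input is \emph{purity}. In the self-dual normalisation $\ic{W}$ is a pure perverse sheaf (for $W$ irreducible its restriction to a smooth dense open is a shifted rank-one local system, hence pure), and by Deligne's Weil~II the proper direct image $Rf_*$ of a pure complex is again pure; thus $Rf_*\ic{W}$ is a pure complex. I would then invoke the structure theory of pure complexes. First, a pure complex $K$ is (non-canonically) isomorphic to $\bigoplus_i\,{}^pH^i(K)[-i]$: the obstruction to splitting off ${}^pH^i(K)$ lives in an $\mathrm{Ext}^1$ between pure perverse sheaves whose weights are forced apart, and so vanishes. This gives (1). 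Second, a pure perverse sheaf is semisimple: a non-split self-extension of a pure perverse sheaf would fail to be pure, so it has no self-extensions and is a direct sum of simple objects; and every simple perverse sheaf is of the form $\ic{\overline{S};\mathcal{L}}$ with $\mathcal{L}$ an irreducible --- hence semisimple --- local system on a smooth locally closed subvariety $S$. Grouping the simple summands of each ${}^pR^if_*\ic{W}={}^pH^i(Rf_*\ic{W})$ by the closures of their supports then yields (2) and (3).

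As an alternative that avoids reduction mod $p$, one can run de Cataldo--Migliorini's argument for $f$ projective entirely within classical Hodge theory: using resolution of singularities and a simultaneous induction one reduces to $W$ smooth projective; one proves the relative hard Lefschetz isomorphisms $\eta^i\colon {}^pR^{-i}f_*\Q_W[\dim W]\xrightarrow{\ \sim\ }{}^pR^if_*\Q_W[\dim W]$ for a relatively ample class $\eta$; Deligne's splitting criterion then deduces (1); and (2)--(3) follow from an induction on $\dim X$ that transports the Hodge--Riemann bilinear relations onto the perverse constituents and, in particular, forces the local systems $\mathcal{L}_{i,S}$ to be completely reducible.

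The main obstacle is semisimplicity: granting the ``Lefschetz package'', assertion (1) and the \emph{shape} of (2) are comparatively formal, but excluding non-trivial extensions among the perverse constituents of $Rf_*\ic{W}$ --- equivalently, proving that the monodromy representations $\mathcal{L}_{i,S}$ are semisimple (assertion (3)) --- is precisely where one must use the full strength of the theory of weights (Weil~II) or, on the Hodge-theoretic side, the delicate inductive argument with the Hodge--Riemann relations. The remaining ingredients --- reduction to $f$ projective, spreading out and specialisation, Artin comparison, and the classification of simple perverse sheaves via intermediate extensions --- are standard.
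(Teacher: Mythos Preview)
The paper does not prove this theorem at all: it is stated as background and attributed in the remark immediately following it to Beilinson--Bernstein--Deligne--Gabber (via arithmetic/weight techniques), to Saito (via mixed Hodge modules), and to de Cataldo--Migliorini (via a geometric Hodge-theoretic argument). There is therefore no ``paper's own proof'' to compare against.

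That said, your sketch is a faithful outline of the BBD route and correctly identifies the de Cataldo--Migliorini argument as the Hodge-theoretic alternative; these are exactly two of the three references the paper cites. Your identification of semisimplicity as the hard core of the theorem, and of purity (or the Hodge--Riemann relations) as the mechanism that forces it, is accurate. One small caution: the spreading-out/specialisation step --- arguing that the decomposition over a dense set of closed points propagates to the generic geometric point and then back to $\C$ via Artin comparison --- is correct in spirit but is not as soft as ``constructible families of $\mathrm{Hom}$-groups''; one has to be a bit careful because the decomposition is non-canonical, so what actually spreads are the numerical invariants (lengths, supports, ranks of the $\mathcal{L}_{i,S}$) together with the vanishing of the relevant $\mathrm{Ext}^1$'s, from which the decomposition is then reconstructed over $\C$. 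As written this is a sketch rather than a proof, which is appropriate given that the paper itself only cites the result.
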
 

\begin{remark} This Decomposition Theorem is due to Beilinson, Bernstein, Deligne  and Gabber \cite[Th\'eor\`eme 6.2.5]{bbd} via arithmetic techniques and results for perverse sheaves on schemes over a finite base field. Another proof and far reaching extension, even applying for a projective morphism of complex analytic varieties, was given by M. Saito \cite{MR1000123, MHM} via his theory of pure and mixed Hodge modules. Finally, in the complex algebraic context a more geometric proof was found by de Cataldo and Migliorini \cite{dCM}. We refer to the beautiful survey \cite{dCM2} for more details, as well as to \cite[Introduction]{BW17} for a short overview of the recent extension of the Decomposition Theorem to semisimple perverse sheaf complexes.
\end{remark} }

In { our topological context we obtain in analogy to (2) and (3) above} a decomposition up to isomorphism for anisotropic forms on perverse sheaves, but only up to Witt equivalence in general. 
{ In fact the perverse sheaves underlying an anisotropic form are semisimple (see Corollory~\ref{direct-sum-cor} for the corresponding algebraic result in a noetherian abelian category with duality).}
 The perverse sheaves underlying pure algebraic Hodge modules automatically carry anisotropic forms coming from polarisations \cite[\S 5.2]{MR1000123}. 
{ Similarly, polarizations of Hodge structures for suitable topological intersection pairings appear inductively in the proof of \cite{dCM}.} This explains why one has a stronger result when working in the algebraic as opposed to in our topological context. 

In our notation the \defn{Cappell--Shaneson decomposition} is (\ref{main eqn}) below. Since intersection cohomology complexes are precisely the intermediate extensions of local systems on the strata it makes sense to compare the canonical and Cappell--Shaneson decompositions. Before doing so though, we should mention that  there is an error in their proof, and (\ref{main eqn}) needs correcting for stratifications of depth greater than or equal to two. The depth one results cited in \cite[Theorem 4.2]{MR2646988} and \cite{MR2796414} are correct. An explicit counterexample for a depth two stratification is provided in \S\ref{rankstrat} using a quiver description for perverse sheaves on rank stratifications  \cite{braden1999}. 

Using a different method of proof we obtain a new, more complicated, expression (\ref{cs1}) which reduces to Cappell and Shaneson's in certain cases, { e.g. for anisotropic forms on perverse sheaves
(see Proposition~\ref{anisotropic formula}).} The key ingredient in the proof is { the following `splitting relation' for Witt classes: 
Let $\imath \colon  Y \hookrightarrow X$ be the inclusion of a closed stratified subspace, in other words $Y$ is a closed union of strata of $X$, with $\jmath \colon  U=X-Y \hookrightarrow X$  the complementary open inclusion.  
Suppose  $\beta\colon \mathcal{B}\to D\mathcal{B}$ is a non-degenerate symmetric form in $\perv{X}$. Then 
\begin{equation}\label{split-perv}
[\beta] = [\imath_*\imath^{!*}\beta] + [\jmath_{!*}\jmath^*\beta].
\end{equation}
in the Witt group  $W(\perv{X})$. Here $$\jmath_{!*} = \im (\pf{\jmath_!} \to \pf{\jmath_*})\quad \text{ and} \quad \imath^{!*} = \im (\pf{\imath^!} \to \pf{\imath^*})$$ are respectively the intermediate extension and restriction. 
In the depth one case in which $Y$ and $U$ are topological manifolds and the strata are their connected components, this reduces to the Cappell--Shaneson decomposition (\ref{main eqn}) below. In this case the perverse truncation used to define the intermediate  restriction $\imath^{!*} = \im (\pf{\imath^!} \to \pf{\imath^*})$ is just truncation of a  sheaf complex with respect to the standard $t$-structure. In general however, the intermediate restriction uses the more complicated perverse truncation, which cannot be expressed easily in geometric terms. By  iterated application of the `splitting relation' (\ref{split-perv}) we end up with  our new decomposition (\ref{cs1}), which we refer to as the \defn{splitting decomposition}. This involves iterated intermediate restrictions.  It turns out that the splitting decomposition (\ref{cs1})} is {\em not} the canonical decomposition in general. Moreover, it can depend upon the choice of representative for the Witt class and on a choice of ordering of the strata of $X$. The reason for these negative results is that intermediate extension is not an exact functor. When it is one obtains stronger results, in particular { (see Corollary~\ref{cs formula conditions}):}

\begin{corollary}
If each stratum has finite fundamental group, or if certain (twisted) intersection cohomology groups of links vanish, then the splitting decomposition (\ref{cs1}) is the canonical one. Moreover, under the second vanishing condition it simplifies to Cappell and Shaneson's decomposition
\begin{equation}
\label{main eqn}
[\beta] = \sum_\textrm{strata $S$} [{\imath_S}_*{\jmath_S}_{!*}\jmath_S^*\imath_S^{!*}\beta]
\end{equation}
where $\imath_S \colon  \overline{S} \hookrightarrow X$ and $\jmath_S \colon   S \hookrightarrow \overline{S}$ are the inclusions.
\end{corollary}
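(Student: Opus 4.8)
The plan is to run the canonical and splitting decompositions in parallel, by induction on the number of strata, peeling off a top-dimensional open stratum at each stage and reducing the comparison to a single recollement $\imath\colon Y\hookrightarrow X$, $\jmath\colon U=X\setminus Y\hookrightarrow X$. The first point is that the two decompositions \emph{always} agree on an anisotropic representative $\beta$: by Corollary~\ref{direct-sum-cor} the underlying perverse sheaf is semisimple, hence a direct sum $\bigoplus_S \imath_{S*}\jmath_{S!*}\mathcal{L}_S$ of $IC$ sheaves of semisimple local systems on the strata, and $\beta$ splits orthogonally as $\bigoplus_S \beta_S$; this is the canonical decomposition of $[\beta]$ under the isomorphism $W(\perv X)\cong\bigoplus_S W(\perv S)$ of Corollary~\ref{witt decomposition theorem}. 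Feeding such a $\beta$ into the splitting recursion and using $\jmath^*\jmath_{!*}=\id$, $\imath^{!*}\imath_*=\id$, $\jmath^*\imath_*=0$, that $\jmath^*$ kills the $IC$ sheaves of the non-open strata, that the strict support condition for $IC$ sheaves makes $\imath^{!*}$ kill the $IC$ sheaf of the open stratum, and that $\imath^{!*}$ is the identity on perverse sheaves already supported on the closed part, one checks that the splitting decomposition of this $\beta$ is again $([\beta_S])_S$.

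Since the canonical decomposition depends only on the Witt class, it now suffices to show that, under either hypothesis, the splitting decomposition of an arbitrary representative coincides with that of an anisotropic one --- equivalently, that each of the functors $\jmath^*$, $\jmath_{!*}$, $\imath^{!*}$ used in the recursion descends to a homomorphism of Witt groups. Here $\jmath^*$ is exact, and $\jmath_{!*}$, $\imath^{!*}$ are additive and compatible with the duality $D$, so they always send hyperbolic forms to hyperbolic forms; the only question is whether they send a form carrying a Lagrangian $\lambda\hookrightarrow\mathcal{B}$ (so that $\mathcal{B}/\lambda\cong D\lambda$) to another such, and this happens precisely when the functor is exact on the short exact sequence $0\to\lambda\to\mathcal{B}\to D\lambda\to 0$.

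This is where the two conditions enter. If $\pi_1(S)$ is finite for every stratum $S$, then by Maschke's theorem over $\Q$ the category $\loc S$, and hence $\perv S$, is semisimple; as $\jmath_{S!*}$ is additive and every short exact sequence in $\perv S$ splits, every intermediate extension functor occurring in the recursion is exact. Consequently $[\imath_*\imath^{!*}\beta]=[\beta]-[\jmath_{!*}\jmath^*\beta]$ depends only on $[\beta]$, and since $\imath_*$ is exact and fully faithful it reflects Witt triviality, so $[\imath^{!*}\beta]\in W(\perv Y)$ is well defined; the recollement splitting supplied by the splitting relation (\ref{split-perv}) is then the one underlying Corollary~\ref{witt decomposition theorem}, and induction on the number of strata identifies the splitting decomposition (\ref{cs1}) with the canonical one. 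If instead the relevant twisted intersection cohomology groups of the links of the strata vanish, then, exactly as in the depth-one case, the perverse truncations defining $\pf{\jmath_!}$, $\pf{\jmath_*}$, $\pf{\imath^!}$, $\pf{\imath^*}$ reduce at each stratum to truncations for the standard $t$-structure: thus $\jmath_{!*}$ is again exact, so the previous argument still gives splitting $=$ canonical, while moreover $\imath^{!*}$ becomes ordinary restriction followed by a standard truncation and is therefore transitive, so the iterated intermediate restrictions in (\ref{cs1}) collapse to the single-step restrictions $\jmath_S^*\imath_S^{!*}\beta$ and (\ref{cs1}) reduces to the Cappell--Shaneson formula (\ref{main eqn}).

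The main obstacle is the finite-$\pi_1$ case, where one must genuinely control the non-exactness of intermediate extension on a non-anisotropic representative: although additivity of $\jmath_{!*}$ forces exactness on the semisimple categories $\perv S$, this has to be propagated through the recursion, verifying in particular that $[\imath^{!*}\beta]$ is a well-defined Witt class on $Y$ and that the iterated recollement splittings assemble into the canonical decomposition of Corollary~\ref{witt decomposition theorem} independently of the ordering of the strata. Under the vanishing hypothesis the argument is easier, as the relevant functors are then exact on the nose and intermediate restriction is literally transitive.
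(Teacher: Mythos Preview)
Your argument for the first assertion (splitting $=$ canonical under either hypothesis) is essentially the paper's, though organised differently: you argue via anisotropic representatives and then show the splitting terms are well-defined on Witt classes, whereas the paper simply notes that both hypotheses force every $\jmath_{k!*}$ to be exact (by Maschke or by the Ext-vanishing of Lemma~\ref{local exact condition} and Corollary~\ref{int ext exact}) and then invokes Corollary~\ref{exact extension cor} directly. Your route is longer but not wrong for this part.

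There is, however, a genuine gap in your reduction to the Cappell--Shaneson formula (\ref{main eqn}). You assert that under the vanishing hypothesis ``the perverse truncations defining $\pf{\jmath_!},\pf{\jmath_*},\pf{\imath^!},\pf{\imath^*}$ reduce at each stratum to truncations for the standard $t$-structure'' and hence that $\imath^{!*}$ becomes ``ordinary restriction followed by a standard truncation and is therefore transitive.'' This is not what the hypothesis gives. The condition $\ih{(\dim L-1)/2}{L;\mathcal{L}}=0$ is a single-degree vanishing; it kills the relevant $\mathrm{Ext}^1$ groups (Lemma~\ref{local exact condition}) and thereby makes $\jmath_{!*}$ exact (Corollary~\ref{int ext exact}), but it does \emph{not} collapse perverse truncation to standard truncation, and it does not make $\imath^{!*}$ transitive on objects or on forms. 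Transitivity of $\imath^{!*}$ fails in general (Example~\ref{cs2 failure}), and nothing in your argument rules this out under the vanishing hypothesis.

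The paper's proof of the second assertion avoids transitivity of $\imath^{!*}$ altogether. It passes to an anisotropic representative $\beta'$, for which Proposition~\ref{anisotropic formula} already gives the isometric identification of the iterated terms with the single-step terms $\jmath_{S!*}\jmath_S^*\imath_S^{!*}\beta'$. Then, using only the exactness of the various $\jmath_{!*}$ (which the vanishing hypothesis does supply), one shows that each Witt class $[\jmath_{S!*}\jmath_S^*\imath_S^{!*}\beta']$ equals $[\jmath_{S!*}\jmath_S^*\imath_S^{!*}\beta]$ for the original $\beta$. So the correct mechanism is: anisotropic representatives give the Cappell--Shaneson shape, exactness of intermediate extension transports it back to $\beta$. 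Your proposed mechanism (transitivity of $\imath^{!*}$ via standard truncations) is unsupported and should be replaced by this one.
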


{ \begin{remark}
In the complex algebraic context the results of this paper don't contribute any new information to the Decomposition Theorem, except that the decomposition of ${}^pR^0f_*\ic{W}$  in Theorem~\ref{alg-decomposition-theorem} fits with the Cappell--Shaneson decomposition as well as with our canonical and splitting decompositions, because the induced form $f_*I_W$ on ${}^pR^0f_*\ic{W}$ is anisotropic. 

In our topological context the canonical decomposition comes from the direct sum decomposition of  $W(\perv{X})$, and is very helpful for understanding the structure of this Witt group.
However, the canonical decomposition of $f_*I_W$ in the stratified mapping situation $f \colon W\to X$ for a Witt space $W$ is very difficult to understand in terms of the geometry of $f$, since one has to find an anisotropic representative in the Witt class $[f_*I_W]$.

Cappell and Shaneson \cite{cs} give a nice geometric interpretation of their decomposition (\ref{main eqn}), but this may differ from our canonical decomposition and only holds under additional assumptions. Our splitting decomposition can be viewed as a technical tool to relate the Cappell--Shaneson and canonical decompositions, when the former holds.   
\end{remark}

\begin{remark}
Cappell and Shaneson introduce the notion of a `locally nonsingular' self-dual perverse sheaf and show in \cite[Theorem~3.2]{cs} that such a `locally nonsingular' self-dual perverse  sheaf is isometric to an orthogonal direct sum of forms on twisted intersection cohomology complexes $\ic{\overline{S};\mathcal{L}_{S}}$ as in part (2) of the Decomposition Theorem~\ref{alg-decomposition-theorem}. This result can also be shown by induction (starting from a closed stratum of smallest dimension) via the `splitting criterion' of \cite[Lemma~4.1.3 and  Remark~4.1.2]{cs} as in the approach of de Cataldo and Migliorini to the Decomposition Theorem. This corresponds to a decomposition of a perverse sheaf as a direct sum of twisted intersection cohomology complexes $\ic{\overline{S};\mathcal{L}_{S}}$, similar to the `strict support decomposition' of pure Hodge modules in 
\cite[(5.1.3.5) and Lemma~5.1.4]{MR1000123}. It implies the Cappell--Shaneson decomposition (\ref{main eqn}) in the Witt group, but it need not correspond to the canonical decomposition, because here one doesn't require the local systems $\mathcal{L}_{S}$ to be semisimple; cf.\ Example~\ref{abstract-nonsingular} and equation (\ref{explicit sum}) for abstract algebraic counterparts. In particular the notion of a `locally nonsingular' self-dual perverse sheaf is weaker than that of an  anisotropic form on a perverse sheaf.
\end{remark} }

For the purposes of this introduction we have framed the above results in a geometric context. However, our methods are primarily algebraic and apply more widely, { see Examples \ref{triangulated cats with duality} and \ref{triangulated gluing examples}}. The general context in which we work is that of a triangulated category with duality, and a self-dual $t$--structure glued from self-dual $t$--structures on a thick subcategory and its quotient. Our first main result, Proposition \ref{abelian-triangulated relation}, identifies $W(\perv{X})$ with the zero'th Balmer--Witt group of the constructible derived category $\constr{X}$ of sheaves on $X$:
{ 
\begin{equation}\label{Witt-Witt}
W(\perv{X}) \cong W_0(\constr{X})\:.
\end{equation} 
This implies many functorial properties of the Witt group $W(\perv{X}) $ of perverse sheaves.  For a stratified map $f\colon W\to X$ from a compact Witt space $W$ it implies
\[
[f_*I_W]=f_*[I_W] \in W(\perv{X}) \cong W_0(\constr{X})
\]
is the direct image of the symmetric intersection form $[I_W]\in W_0(\constr{W})$ under the pushforward $f_*=f_!$, which commutes with Verdier duality. In a sense this is the substitute for part (1) of the Decomposition Theorem~\ref{alg-decomposition-theorem} in our topological context.}

 When $X$ is compact and admits a triangulation compatible with the stratification, for instance when $X$ is a compact Whitney or subanalytic stratified space, then we can pass to the zero'th Balmer--Witt group of the PL-constructible derived category. With $\Q$ coefficients, these Witt groups form a generalised homology theory isomorphic to symmetric $L$-theory \cite[Corollary 4.10]{witty}. Our splitting decomposition therefore induces formul\ae\ for the $L$-theoretic fundamental classes { $ [\beta]_{\L}$ of self-dual perverse sheaves
\[
W(\perv{X}) \cong W_0(\constr{X}) \to W_0(\plconstr{X}) \colon [\beta]\mapsto [\beta]_{\L}
\]
 as sums of forms on simple perverse sheaves.  In our approach it is important to start with the constructible derived category with respect to a fixed stratification, with its self-dual perverse $t$-structure, since the latter is not visible in the PL context. Such formul\ae \ for $L$-theoretic fundamental classes of self-dual perverse sheaves were foreseen by Cappell--Shaneson~\cite{cs} as natural improvements of their formul\ae \ for homological $L$-classes of self-dual perverse sheaves. This simple definition of the  $L$-theoretic fundamental classes of self-dual perverse sheaves needs the identification~(\ref{Witt-Witt}) with Balmer-Witt groups, and not just the cobordism groups $\Omega_{CS}(X)$ of Cappell--Shaneson~\cite{cs} (or~\cite{youssin}).\\}

Pushing forward to a point one obtains corresponding formul\ae\ for signatures { and Witt classes} of self-dual perverse sheaves. These generalise the classical Chern--Hirzebruch--Serre formula for the signature of a smooth fibre bundle to singular spaces and perverse sheaves on them.
{ Since this is not the subject of this paper, we only  illustrate it by the following simple example of a compact oriented base manifold $X$ as a one stratum space. Let $f\colon W \to X$  be proper stratified map  from a Witt space $W$ with $\dim W \equiv 0 \:(4)$ to a  compact oriented manifold $X$ of even dimension. The fibre $F$ of $f$ is also a Witt space. Assume ${}^pR^0f_*\ic{W}$ is a constant local system on $X$, i.e. $\pi_1X$ acts trivially on the middle dimensional intersection cohomology $\ih{(\dim W - \dim X)/2}{F}$. Then 
\[
w(W)=\sigma(X)\cdot w(F)\in W(\Q)\:.
\]
}

The main tool we use is the aforementioned `splitting relation' (Theorem \ref{sum thm}) which is a generalisation of isotropic reduction. This is expressed most naturally in terms of degenerate forms, and so in \S\ref{witt groups} we review the construction of the Witt group of an abelian category explaining how to treat degenerate forms on an equal footing with non-degenerate ones. The Witt class of a degenerate form is the class of the induced non-degenerate form on its image; for this reason it is essential that we work with abelian categories rather than in the broader context of exact categories where there is no notion of image.

Our main results are consequences  of the splitting relation. Firstly, it implies
\[
\wgc{}{\cat{A}} \cong \bigoplus_{[s\cong Ds]} \wgc{}{\langle s \rangle}
\]
where  $\cat{A}$ is a noetherian abelian category, $\langle s \rangle$ is the full Serre subcategory generated by the self-dual simple object $s\cong Ds$, and the sum is over isomorphism classes of such objects. { This is well-known, see for example \cite[\S 6]{MR543249} or \cite[Chapter 5]{sheiham}, although the usual proof uses Hermitian devissage rather than our splitting relation.
See also \cite[Corollary 4.13]{youssin}}, but note that the $\wgc{}{\langle s\rangle}$ need not be freely generated as claimed there. Secondly, when
\[
\cat{A} \stackrel{\imath_*}{\to} \cat{B} \stackrel{\jmath^*}{\to} \cat{C}
\]
 is an  exact triple of triangulated categories with duality and the self-dual $t$-structure on $\cat{B}$ is glued from $t$-structures on $\cat{A}$ and $\cat{C}$, the splitting relation yields a formula
 \[
 [\beta] = [\imath_*\imath^{!*}\beta] + [ \jmath_{!*}\jmath^*\beta]
 \]
 in $W(\cat{B}^0)$ where $\cat{B}^0$ is the self-dual heart of the $t$-structure. In general this formula depends upon the representative form $\beta$.

In \S\ref{stratified spaces} we apply these algebraic results to categories of perverse sheaves on a topologically stratified space with finitely many strata. The splitting decomposition (\ref{cs1}) is obtained by iteratively applying the splitting relation: we choose an ordering of the strata and split off terms on an open stratum one-by-one. In \S\ref{rankstrat} and \S\ref{schubertstrat} we provide some explicit examples and counterexamples using the quiver description of perverse sheaves on a rank stratification given in \cite{braden1999} and on Schubert-stratified projective spaces given in \cite{MR1900761}.

In the final section we turn our attention to algebraically constructible perverse sheaves $\pervc{\text{alg}}{X}$ on a complex algebraic variety $X$. If $f\colon X \to \C$ is an algebraic map then the unipotent nearby and vanishing cycles formalism of \cite{Beilinson1987} provides an equivalence between this and a `gluing category' built from $\pervc{\text{alg}}{f^{-1}(0)}$ and $\pervc{\text{alg}}{X-f^{-1}(0)}$. In this situation too the Witt group decomposes as a direct sum
\[
W(\pervc{\text{alg}}{X}) \cong W( \pervc{\text{alg}}{X-f^{-1}(0)} ) \oplus W( \pervc{\text{alg}}{f^{-1}(0)} ).
\]
The projection is given by restriction along $\jmath\colon X-f^{-1}(0) \hookrightarrow X$ and the perverse unipotent vanishing cycles functor $\VC$, and the inclusions are given by the maximal extension functor $\ME$ and extension by zero along $\imath \colon f^{-1}(0) \hookrightarrow X$. Corollary \ref{relating gluing and splitting} relates this decomposition to the terms in the splitting formula, specifically
\begin{align*}
[\imath^{!*}\beta] &= \VC [\beta] - [\NC(\jmath^*\beta)\circ N] \\
 [\jmath_{!*}\gamma] &= \ME [\gamma] + \imath_*[\NC\gamma \circ N]
\end{align*}
where $\NC$ is the perverse unipotent nearby cycles functor, and $N:\NC\to \NC(-1)$ is, up to a Tate twist, the logarithm  of the  monodromy $\mu$ acting on  $\NC$.\\

\noindent
{ {\bf Acknowledgments.} 
We dedicate this paper to the memory of Andrew Ranicki.

We would like to thank M. Banagl, S. Cappell, G. Friedman, L. Maxim and J. Shaneson for helpful discussions. We also thank the referees for their comments and suggestions.

J. Sch\"{u}rmann was funded by the Deutsche Forschungsgemeinschaft (DFG, German Research Foundation) under Germany's Excellence Strategy --- EXC 2044, Mathematics M\"{u}nster: Dynamics-Geometry-Structure.}

\section{Witt groups}
\label{witt groups}

\subsection{Categories with duality}
\label{categories with duality}
A \defn{category with duality} is a triple $(\cat{A}, D,\chi)$ in which $\cat{A}$ is a category, $D$ is a functor
$\cat{A}^{\rm op} \to \cat{A}$, and $\chi$ is a natural isomorphism $\id \to D^2$ such that the morphisms 
\[
\begin{tikzcd}
Da\ar{rr}{\chi_{Da}} && D^3a & \textrm{and} & D^3a \ar{rr}{D\chi_{a}}&& Da
\end{tikzcd}
\]
are mutually inverse for any object $a \in \cat{A}$.
\begin{examples}
We are principally interested in abelian categories with duality. These arise in many contexts in topology, geometry and representation theory, usually related to finite-dimensional representations of some (graded) algebra with involution. Prominent examples include
\begin{enumerate}
\item local systems on  { a topological manifold $M$ (in the connected case these are modules over the group ring of the fundamental group 
$\pi_1M$,} with involution induced by the group inverse);
\item finite-dimensional representations of a quiver with involution (as in \cite[\S3.2]{Young});
\item finitely-generated torsion modules over a Dedekind ring $R$.  
\end{enumerate}
In each case the duality is given by morphisms into a dualising object; in the first { case this is the orientation sheaf $or_M$ of $M$ --- if $M$ is connected and oriented this is the trivial representation of the fundamental group; in the second case it is the constant one-dimensional representation;} in the third case it is $Q(R)/R$ where $Q(R)$ is the quotient field.
\end{examples}
A \defn{bilinear form} on an object $a\in\cat{A}$ is a morphism $\alpha \colon  a \to Da$. A form is \defn{non-degenerate} if $\alpha$ is an isomorphism, and it is \defn{$\epsilon$-symmetric}, where $\epsilon$ is either $+1$ or $-1$, if the diagram
\[
\begin{tikzcd}
a \ar{rr}{\alpha} \ar{dr}[swap]{\chi(a)} && Da \\
& D^2 a \ar{ur}[swap]{\epsilon D\alpha} 
\end{tikzcd}
\]
commutes. To make sense of $\epsilon$-symmetry we need $\cat{A}$ to be additive. In fact it suffices to consider the case $\epsilon=1$ since we may always absorb the sign into the definition of the natural transformation $\chi$, \ie antisymmetric forms are symmetric forms for a different duality. 

Fix a bilinear form $\beta\colon b \to Db$.  Given a morphism $f\colon a\to b$ the \defn{restriction} $\beta|_f$ is the composite  $Df\circ \beta \circ f$ on $a$. When $f$ is a monomorphism we will often abuse notation and denote the restriction by $\beta|_a$. The restriction $\beta|_f$ is symmetric whenever $\beta$ is. 

Bilinear forms $\alpha$ and $\beta$ are \defn{isometric}, written $\alpha\cong \beta$, if there is an isomorphism $f\colon  a \to b$ such that $\alpha=\beta|_f$. For example, when $\alpha\colon a\to Da$ is non-degenerate then $(D\alpha)^{-1}=\epsilon \chi(a) \alpha^{-1}$ is a  symmetric form and is isometric to $\alpha$ because 
\[
\begin{tikzcd}
a \ar{d}[swap]{\alpha} \ar{r}{\alpha} & Da \ar{d}{(D\alpha)^{-1}} \\
Da & D^2a \ar{l}{D\alpha}
\end{tikzcd}
\]
commutes. Isometry is an equivalence relation which preserves non-degeneracy and symmetry. The \defn{Witt monoid  of degenerate forms} $\mw{}{\cat{A}}$ is the set of isometry classes of symmetric forms under direct sum. The non-degenerate symmetric forms constitute a sub-monoid, the \defn{Witt monoid} $\mwc{}{\cat{A}}$. 

Suppose $(\cat{A}, D_\cat{A},\chi_\cat{A})$ and $(\cat{B}, D_\cat{B},\chi_\cat{B})$ are categories with duality, and $F\colon \cat{A} \to \cat{B}$ a functor. We say $F$ \defn{commutes with duality} if there is a natural isomorphism $\eta\colon  FD_\cat{A} \to D_\cat{B}F$ such that 
\[
\begin{tikzcd}
F \ar{r}{F\chi} \ar{d}[swap]{\chi F}& F D_\cat{A}^2\ar{d}{\eta D_\cat{A}}\\
D_\cat{B}^2 F\ar{r}[swap]{D_\cat{B} \eta} & D_\cat{B}FD_\cat{A}
\end{tikzcd}
\]
commutes. This ensures that  $\eta_aF\alpha$ is symmetric for $D_\cat{B}$ whenever $\alpha\colon a\to Da$ is symmetric for $D_\cat{A}$. Such a functor induces a morphism
$\mw{}{\cat{A}}\to \mw{}{\cat{B}}$ 
which restricts to a morphism between the submonoids of non-degenerate forms. We will suppress the natural transformation $\eta_a$ and simply write $F\alpha$ for the image form. 

\subsection{Witt groups of abelian categories}
\label{witt groups of abelian categories}

Suppose that $\cat{A}$ is an abelian category with exact duality $D$. It follows that if $\ker f \mono a$ is a kernel of $f\colon a\to b$ then $Da \epi D\ker f$ is a cokernel of $Df\colon Db \to Da$. Therefore there is a canonical isomorphism $D\ker f \cong \coker Df$, and similarly $D\,\coker f \cong \ker Df$. In practice we will suppress these identifications.

Fix a symmetric form $\beta \colon  b \to Db$. A subobject $\imath\colon a \mono b$ is 
\begin{enumerate}
\item \defn{$\beta$-isotropic} if the restriction $\beta|_\imath=0$;
\item  \defn{$\beta$-lagrangian} if the sequence $0 \to a \stackrel{\imath}{\longrightarrow} b \stackrel{D\imath  \beta}{\longrightarrow} Da \to 0$ is exact;
\item and \defn{$\beta$-null} if $\beta\circ \imath=0$.
\end{enumerate}
When the form $\beta$ is understood we suppress it from the notation. Null and lagrangian subobjects are isotropic, but not necessarily {\it vice versa}. Isotropic subobjects are also known, for instance in \cite{handbook}, as \defn{sublagrangians} because any subobject of a lagrangian is isotropic. If a form has no non-zero isotropic subobjects we say it is \defn{anisotropic}. 

The \defn{orthogonal complement} of a subobject $\imath \colon  a \mono b$ is defined to be the subobject
\[
\oc{a}{\beta} = \ker (D\imath \beta).
\]
A subobject $\imath\colon a \mono b$ is isotropic if and only if it factors through $\oc{a}{\beta}$, lagrangian if and only if the factorisation is an isomorphism $a \cong \oc{a}{\beta}$ and null if and only if the inclusion is an isomorphism $\oc{a}{\beta} \cong b$. 

A non-degenerate form $\eta$ which has a lagrangian is called {\em metabolic}. Non-degenerate forms $\beta_0$ and $\beta_1$ are {\em Witt-equivalent} if they are stably isometric by metabolic forms, \ie if there exist metabolic forms $\eta_0$ and $\eta_1$ such that
\[
\beta_0 \oplus \eta_0 \cong \beta_1 \oplus \eta_1.
\]
This defines an equivalence relation on $\mwc{}{\cat{A}}$.
\begin{definition}
The  \defn{Witt group}   $\wgc{}{\cat{A}}$ of $\cat{A}$ is the set of Witt-equivalence classes in $\mwc{}{\cat{A}}$ under $\oplus$. This is a group, not just a monoid, because $\beta\oplus -\beta$ is Witt equivalent to $0$. The class of a non-degenerate symmetric form $\beta$ is denoted $[\beta]$.
\end{definition}
\begin{remark}
Making the analogous definitions with antisymmetric forms in place of symmetric ones or, as explained above, working with symmetric forms in the category with duality $(\cat{A}, D, -\chi)$, we obtain the Witt  group $\wgc{-}{\cat{A}}$ of antisymmetric forms.
\end{remark}

If $F\colon \cat{A}\to \cat{B}$ is an exact functor which commutes with duality then it preserves metabolic forms and so induces maps $\wgc{\pm}{F}\colon \wgc{\pm}{\cat{A}} \to {  \wgc{\pm}{\cat{B}} }$. We will see shortly that  in some cases we can weaken the requirement that $F$ is exact.

\subsection{Isotropic reduction}
\label{isotropic reduction}

Fix a symmetric form $\beta \colon  b \to Db$. Given a null subobject $\imath\colon a \mono b$ there is an induced symmetric form on the cokernel of $\imath$ such that
\[
\begin{tikzcd}
b \ar{d}{\beta} \ar[two heads]{r} & \coker \imath \ar[dashed]{d}\\
Db & \ker D\imath \ar[hook]{l}
\end{tikzcd}
\]
commutes --- symmetry follows from the uniqueness of the induced morphism. In particular $\ker \beta$ is always null and the corresponding symmetric form $\overline{\beta} \colon  \im \beta \to \coim D\beta$ is non-degenerate. 

This is a special case of a more general construction starting from an isotropic subobject $\imath\colon a \mono b$. Note that the factorisation $a \mono \oc{a}{\beta}$ is always null  for the restriction $\beta|_{\oc{a}{\beta}}$ because 
\[
D(\oc{a}{\beta}) =D \ker(D\imath\beta) \cong \coker (D\beta D^2\imath) \cong \coker (\beta \imath).
\]
{ It is a kernel of $\beta|_{\oc{a}{\beta}}$ when $\beta$ is non-degenerate.} The \defn{isotropic reduction} $\ir{\beta}{a}$ is defined to be the induced symmetric form on the cokernel of $a \mono \oc{a}{\beta}$.  We note some special cases:  when $\beta$ is non-degenerate $\ir{\beta}{a} = \overline{\beta|_{\oc{a}{\beta}}}$, when $a$ is a null subobject $\ir{\beta}{a}$ is the induced symmetric form on the quotient, and in particular $\ir{\beta}{\ker \beta}=\overline{\beta}$.  The isotropic reduction is the zero form on the zero object if, and only if, $\imath\colon a \to b$ is lagrangian. If $\beta$ is  non-degenerate then so is any reduction of $\beta$ (but not {\it vice versa}).

Isotropic reduction is compatible with restriction to a subobject in the following sense.
\begin{lemma}
\label{restriction and reduction commute}
Suppose we have a commutative diagram 
\[
\begin{tikzcd}
a \ar[hook]{r}{\imath} \ar{d}{0} & b \ar[hook]{r}{\jmath} \ar{d}{\beta} & c \ar{d}{\gamma} \\
Da & Db \ar[two heads]{l}{D\imath}& Dc \ar[two heads]{l}{D\jmath}
\end{tikzcd}
\]
in which $\gamma \colon  c \to Dc$ is symmetric (so that $\beta=\gamma|_\jmath$ and $a$ is an isotropic subobject of both $\beta$ and of $\gamma$). Then there is a { monomorphism $\ir{\jmath}{a} \colon  \oc{a}{\beta}/a \to \oc{a}{\gamma}/a$} such that
\[
\ir{\left(\gamma|_\jmath\right)}{a}=\left(\ir{\gamma}{a}\right)|_{\ir{\jmath}{a}}.
\]
\end{lemma}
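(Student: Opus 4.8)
The plan is to reduce the statement to the special case in which $a$ is a \emph{null} subobject, where the compatibility becomes a short diagram computation with the universal properties that define the induced forms.

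First I would record how the orthogonal complements are related. Since $\beta=D\jmath\,\gamma\,\jmath$ we have $D\imath\,\beta=D(\jmath\imath)\,\gamma\,\jmath$, so $\oc{a}{\beta}=\ker(D(\jmath\imath)\,\gamma\,\jmath)$ is the preimage under $\jmath$ of $\oc{a}{\gamma}=\ker(D(\jmath\imath)\,\gamma)$; that is, $\oc{a}{\beta}=\jmath^{-1}(\oc{a}{\gamma})$ as subobjects of $b$. Hence $\jmath$ restricts to a monomorphism $f\colon\oc{a}{\beta}\to\oc{a}{\gamma}$ with $(\oc{a}{\gamma}\mono c)\,f=\jmath\,(\oc{a}{\beta}\mono b)$, and $f$ carries the copy of $a$ inside $\oc{a}{\beta}$ isomorphically onto the copy inside $\oc{a}{\gamma}$, so $f^{-1}(a)=a$. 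Passing to cokernels by $a$ yields the asserted morphism $\ir{\jmath}{a}\colon\oc{a}{\beta}/a\to\oc{a}{\gamma}/a$; it is a monomorphism because $f$ is a monomorphism with $f^{-1}(a)=a$ while the quotient map $\oc{a}{\beta}\epi\oc{a}{\beta}/a$ is an epimorphism with kernel $a$. From $\beta=D\jmath\,\gamma\,\jmath$ and the compatibility of $f$ with the inclusions one gets $\beta|_{\oc{a}{\beta}}=(\gamma|_{\oc{a}{\gamma}})|_{f}$. As $\ir{\beta}{a}$ and $\ir{\gamma}{a}$ are by definition the forms induced from the null subobjects $a\mono\oc{a}{\beta}$ and $a\mono\oc{a}{\gamma}$ for these restricted forms (null by the remark preceding the definition of isotropic reduction), the lemma reduces to the following: given a symmetric form $\gamma$ on $c$, a monomorphism $f\colon b\mono c$, $\beta:=\gamma|_{f}$, and a subobject $a\mono b$ whose image in $c$ is null for $\gamma$ (hence $a$ is null for $\beta$), the induced map $\bar f\colon b/a\to c/a$ satisfies $\ir{\beta}{a}=(\ir{\gamma}{a})|_{\bar f}$.

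For this reduced statement I would use that, writing $q_b\colon b\epi b/a$ and $q_c\colon c\epi c/a$, exactness of $D$ identifies $Dq_b\colon D(b/a)\to Db$ with the monomorphism $\ker D(a\mono b)\mono Db$ through which $\beta$ factors (and similarly for $c$), so that $\ir{\beta}{a}$ is characterised by $\beta=Dq_b\circ(\ir{\beta}{a})\circ q_b$ and $\ir{\gamma}{a}$ by $\gamma=Dq_c\circ(\ir{\gamma}{a})\circ q_c$. The map $\bar f$ is characterised by $\bar f\circ q_b=q_c\circ f$, whence $Dq_b\circ D\bar f=Df\circ Dq_c$ after applying $D$. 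Then
\[
Dq_b\circ(\ir{\beta}{a})\circ q_b=\beta=Df\circ\gamma\circ f=Df\circ Dq_c\circ(\ir{\gamma}{a})\circ q_c\circ f=Dq_b\circ D\bar f\circ(\ir{\gamma}{a})\circ\bar f\circ q_b,
\]
and cancelling the monomorphism $Dq_b$ on the left and the epimorphism $q_b$ on the right gives $\ir{\beta}{a}=D\bar f\circ(\ir{\gamma}{a})\circ\bar f=(\ir{\gamma}{a})|_{\bar f}$. Combined with the first step (where $\bar f=\ir{\jmath}{a}$ and $\beta=\gamma|_{\jmath}$, $\beta|_{\oc{a}{\beta}}=(\gamma|_{\oc{a}{\gamma}})|_{f}$) this is exactly the claim.

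I expect the only delicate point to be bookkeeping of the canonical isomorphisms $D\coker\cong\ker D$ and $D\ker\cong\coker D$ furnished by exactness of the duality: concretely, that $Dq_b$ and $Dq_c$ are literally the subobject inclusions through which $\beta$ and $\gamma$ factor, and that $Dk$ (for $k\colon a\mono b$) is the cokernel appearing in the description $D(\oc{a}{\beta})\cong\coker(\beta\imath)$ used in the text. Once these identifications are pinned down the argument is purely formal, and $\epsilon$-symmetry plays no role since, as noted in \S\ref{categories with duality}, we may take $\epsilon=1$.
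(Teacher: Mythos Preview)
Your proof is correct and follows essentially the same route as the paper's. The paper constructs the monomorphism $a^\beta\hookrightarrow a^\gamma$ via pullback (your preimage computation $a^\beta=\jmath^{-1}(a^\gamma)$) and then assembles a single large commutative diagram whose inner square is your identity $\beta|_{a^\beta}=(\gamma|_{a^\gamma})|_f$ and whose outer square is the claim; your explicit reduction to the null-subobject case, with the cancellation argument using $\beta=Dq_b\circ(\ir{\beta}{a})\circ q_b$, is precisely the verification that the outer square commutes given that the inner one does.
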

\begin{proof}
Taking successive pullbacks we obtain a commutative diagram:
\[
\begin{tikzcd}
a \ar[hook]{r} \ar[equal]{d} & \oc{a}{\beta} \ar[hook]{r} \ar[hook]{d} & \oc{a}{\gamma} \ar[hook]{d} \\
a  \ar[hook]{r}{\imath} & b  \ar[hook]{r}{\jmath} & c.
\end{tikzcd}
\]
Hence there is an induced { monomorphism $\ir{\jmath}{a}\colon  \oc{a}{\beta}/a \to \oc{a}{\gamma}/a$} such that the top (and dual bottom) inner squares of the following diagram commute.
\[
\begin{tikzcd}
\oc{a}{\beta}/a \ar[hook]{rrr}{\ir{\jmath}{a}} \ar{ddd}[swap]{\ir{\beta}{a}}  &&& \oc{a}{\gamma}/a \ar{ddd}{\ir{\gamma}{a}} \\
& \oc{a}{\beta} \ar[hook]{r} \ar{d}[swap]{\beta|_{\oc{a}{\beta}}} \ar[two heads]{ul} & \oc{a}{\gamma} \ar{d}{\gamma|_{\oc{a}{\gamma}}} \ar[two heads]{ur}&\\
& D\oc{a}{\beta} & D\oc{a}{\gamma} \ar[two heads]{l}&\\
D\left(\oc{a}{\beta}/a\right)\ar[hook]{ur} &&& D\left(\oc{a}{\gamma}/a\right) \ar[two heads]{lll}{D(\ir{\jmath}{a})} \ar[hook]{ul}
\end{tikzcd}
\]
The remaining internal squares commute by definition. Hence the outer square commutes. 
\end{proof}
Reduction by the kernel of a degenerate form is compatible with isotropic reductions in the following sense. 
\begin{lemma}
\label{commuting reductions}
Suppose $\imath\colon a \mono b$ is isotropic for symmetric $\beta\colon  b \to Db$. Then 
\[
\overline{\ir{\beta}{a}} \cong \ir{\overline{\beta}}{\overline{a}}
\]
where $\overline{a}$ is the image of $a \mono b \epi \im \beta$.
\end{lemma}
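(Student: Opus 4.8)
The plan is to express both sides of the asserted isometry as the non-degenerate reduction $\overline{(\,\cdot\,)}$ of one and the same form, namely the restriction $\beta|_{\oc{a}{\beta}}$ of $\beta$ to the orthogonal complement $\oc{a}{\beta}=\ker(D\imath\,\beta)$, and then to invoke the routine fact that $\overline{(\,\cdot\,)}$ is insensitive to a preliminary quotient by a null subobject. First I would fix the coimage projection $e\colon b\epi\im\beta$ (so $\im\beta=b/\ker\beta$) and recall that, with the canonical identifications suppressed throughout (in particular $\ker De\cong D\im\beta$), the square defining $\overline\beta$ as the form induced on $\coker(\ker\beta\mono b)=\im\beta$ says precisely that $\beta$ factors as $\beta=(De)\circ\overline\beta\circ e$ with $\overline\beta\colon\im\beta\to D\im\beta$ non-degenerate and symmetric. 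Writing $e\circ\imath=n\circ p$ for the epi-mono factorisation of $a\to\im\beta$, so that $\overline{a}=\im n$, $p\colon a\epi\overline{a}$, $n\colon\overline{a}\mono\im\beta$, the identity $\beta|_\imath=D(np)\circ\overline\beta\circ(np)=Dp\circ(\overline\beta|_{\overline a})\circ p$ combined with $\beta|_\imath=0$ (isotropy of $a$) forces $\overline\beta|_{\overline a}=0$, because $p$ is an epimorphism and $Dp$ a monomorphism. Hence $\overline a$ is $\overline\beta$-isotropic and the right-hand side of the lemma is meaningful.

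Next I would match up the orthogonal complements. From $D\imath\,\beta=D\imath\circ De\circ\overline\beta\circ e=Dp\circ\bigl(Dn\circ\overline\beta\bigr)\circ e$, with $Dp$ monic and $e$ epic, one reads off
\[
\oc{a}{\beta}=\ker(D\imath\,\beta)=e^{-1}\bigl(\ker(Dn\circ\overline\beta)\bigr)=e^{-1}\bigl(\oc{\overline a}{\overline\beta}\bigr),
\]
so that $e$ restricts to an epimorphism $e'\colon\oc{a}{\beta}\epi\oc{\overline a}{\overline\beta}$ with kernel $\ker e=\ker\beta$. Restricting $\beta=De\circ\overline\beta\circ e$ along $\oc{a}{\beta}\mono b$, and using $e\circ(\oc{a}{\beta}\mono b)=(\oc{\overline a}{\overline\beta}\mono\im\beta)\circ e'$, gives $\beta|_{\oc{a}{\beta}}=D(e')\circ\bigl(\overline\beta|_{\oc{\overline a}{\overline\beta}}\bigr)\circ e'$. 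Since $e'$ is the cokernel of $\ker\beta\mono\oc{a}{\beta}$, the uniqueness clause in the construction of an induced form then tells us that $\ker\beta$ is a null subobject of $\beta|_{\oc{a}{\beta}}$ and that the induced form on $\oc{a}{\beta}/\ker\beta$ is exactly $\overline\beta|_{\oc{\overline a}{\overline\beta}}$.

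It remains to record the elementary fact that for a symmetric form $\gamma$ and a $\gamma$-null subobject $\ell$, the non-degenerate reduction $\overline{\gamma/\ell}$ is canonically isometric to $\overline\gamma$: one computes $\ker(\gamma/\ell)=(\ker\gamma)/\ell$, so reducing $\gamma/\ell$ by its kernel agrees with reducing $\gamma$ by $\ker\gamma$, the induced forms coinciding once more by uniqueness. Applying this with $\gamma=\beta|_{\oc{a}{\beta}}$ and $\ell=a$ --- which is $\gamma$-null because $\ir{\beta}{a}$ is by definition the induced form $(\beta|_{\oc{a}{\beta}})/a$ --- gives $\overline{\ir{\beta}{a}}\cong\overline{\beta|_{\oc{a}{\beta}}}$. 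Applying it instead with $\ell=\ker\beta$ and combining with the previous paragraph gives $\overline{\beta|_{\oc{a}{\beta}}}\cong\overline{\,\overline\beta|_{\oc{\overline a}{\overline\beta}}\,}$, and the latter equals $\ir{\overline\beta}{\overline a}$ since $\overline\beta$ is non-degenerate, by the case of isotropic reduction recalled above. Chaining the three isometries proves the lemma.

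The step I expect to be the real obstacle is the bookkeeping in the middle two paragraphs: one has to carry the canonical identifications ($\im\beta\cong\coim D\beta$, $D\coker\cong\ker D$, hence the factorisation $\beta=De\circ\overline\beta\circ e$) carefully enough to be certain that the square
\[
\begin{tikzcd}
\oc{a}{\beta}\ar[two heads]{r}{e'}\ar{d}[swap]{\beta|_{\oc{a}{\beta}}}&\oc{\overline a}{\overline\beta}\ar{d}{\overline\beta|_{\oc{\overline a}{\overline\beta}}}\\
D(\oc{a}{\beta})&D(\oc{\overline a}{\overline\beta})\ar[hook]{l}{D(e')}
\end{tikzcd}
\]
actually commutes. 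Once that is in hand, the rest is a diagram chase of the same flavour as Lemma~\ref{restriction and reduction commute}, and no new idea is needed.
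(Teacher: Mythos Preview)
Your proof is correct and follows essentially the same approach as the paper. Both arguments identify the orthogonal complements via the coimage projection $e$, establishing that $e$ restricts to an epimorphism $\oc{a}{\beta}\epi\oc{\overline a}{\overline\beta}$ with kernel $\ker\beta$ and that the restricted forms are compatible through it; the paper packages this into a single commutative diagram from which the result is read off, whereas you factor the conclusion through two applications of your ``null subobjects do not affect $\overline{(\cdot)}$'' observation, but the content is the same.
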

\begin{proof}
Let $\overline{b}=\im\beta$ and $\overline{a}=\im(a \mono b \epi \im \beta)$. Then there is a commutative diagram:
\[
\begin{tikzcd}
a \ar{ddd}{0} \ar[hook]{rr} \ar[two heads]{dr} && b \ar[two heads]{d} & \oc{a}{\beta} \ar[two heads]{d} \ar[hook]{l}  \ar[two heads]{rr} && \oc{a}{\beta}/a \ar[two heads]{dl} \ar{ddd}{\ir{\beta}{a}}\\
&\overline{a} \ar[hook]{r} \ar{d} & \overline{b} \ar{d}{\overline{\beta}} & \oc{\overline{a}}{\overline{\beta}}  \ar{d} \ar[hook]{l} \ar[two heads]{r}& \oc{\overline{a}}{\overline{\beta}}/\overline{a} \ar{d}{\ir{\overline{\beta}}{\overline{a}}}&\\
&D\overline{a}  \ar[hook]{dl} & D\overline{b} \ar[hook]{d} \ar[two heads]{r}  \ar[two heads]{l} & D\oc{\overline{a}}{\overline{\beta}}  \ar[hook]{d}& D\left(\oc{\overline{a}}{\overline{\beta}}/\overline{a}\right) \ar[hook]{l}\ar[hook]{dr}&\\
Da && Db \ar[two heads]{r} \ar[two heads]{ll} & D \oc{a}{\beta} && D\left(\oc{a}{\beta}/a\right) \ar[hook]{ll}
\end{tikzcd}
\]
The result follows by considering the right hand square and recalling that $\ir{\overline{\beta}}{\overline{a}}$ is an isomorphism.
\end{proof}
Loosely we can say that `reduction by the kernel commutes with all other isotropic reductions'. 

The proof of the next lemma is an elementary diagram chase, which we omit. 
\begin{lemma}
\label{reduction in stages}
Let $\gamma \colon  c \to Dc$ be symmetric and $a \mono c$ isotropic. Then quotienting by $a$ induces a one-to-one correspondence between factorisations $a \mono b  \mono c$ with $b$ isotropic and isotropic subobjects of the reduced form $\ir{\gamma}{a}$. Furthermore $\ir{\gamma}{b} \cong \ir{\left(\ir{\gamma}{a}\right)}{(b/a)}$.
\end{lemma}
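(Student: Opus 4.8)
The plan is to establish the one-to-one correspondence first, then deduce the identity of reduced forms from it. Throughout, write $q\colon c \twoheadrightarrow c' := \oc{a}{\gamma}/a$ for the reduction map onto the underlying object of $\ir{\gamma}{a}$, noting that $a \mono c$ isotropic means $a$ factors through $\oc{a}{\gamma}$, so $q$ is defined on all of $\oc{a}{\gamma}$.

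For the correspondence, I would argue as follows. Given a factorisation $a \mono b \mono c$ with $b$ isotropic for $\gamma$, then $a \mono b$ is isotropic for the restriction $\gamma|_b = 0$, so it is automatically isotropic, and $b \subseteq \oc{a}{\gamma}$ since $b$ isotropic means $b \subseteq \oc{b}{\gamma} \subseteq \oc{a}{\gamma}$. Hence $b/a \mono c'$ is a well-defined subobject. Conversely, any subobject of $c'$ is of the form $b/a$ for a unique intermediate $a \mono b \mono \oc{a}{\gamma}$ by the lattice isomorphism theorem for the abelian category $\cat{A}$. The content to check is that $b/a$ is $\ir{\gamma}{a}$-isotropic if and only if $b$ is $\gamma$-isotropic: this is exactly the statement that $\gamma|_b = 0$ translates under the quotient to $(\ir{\gamma}{a})|_{b/a} = 0$, which follows by applying Lemma \ref{restriction and reduction commute} with $(a \mono b \mono c, \gamma)$ — or more directly by chasing the commuting square defining $\ir{\gamma}{a}$, since the composite $b \mono \oc{a}{\gamma} \xrightarrow{\gamma|_{\oc{a}{\gamma}}} D\oc{a}{\gamma} \twoheadrightarrow Db$ factors through $b/a$ and its vanishing is equivalent to that of the induced map on $b/a$.

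For the second assertion $\ir{\gamma}{b} \cong \ir{(\ir{\gamma}{a})}{(b/a)}$, the key observation is that $\oc{b}{\gamma}$ maps onto $\oc{b/a}{\ir{\gamma}{a}}$ under $q$, with kernel $a$; this again comes from a pullback/pushout diagram chase identifying $\oc{b}{\gamma} \subseteq \oc{a}{\gamma}$ with the preimage of $\oc{b/a}{\ir{\gamma}{a}} \subseteq c'$, using that $q$ restricted to $\oc{a}{\gamma}$ has kernel $a$. Then both $\ir{\gamma}{b}$ and $\ir{(\ir{\gamma}{a})}{(b/a)}$ are the induced form on $\oc{b}{\gamma}/b \cong \left(\oc{b/a}{\ir{\gamma}{a}}\right)/(b/a)$, and one checks the two induced symmetric forms agree by the uniqueness clause for induced forms (as used repeatedly in \S\ref{isotropic reduction}), since both are induced from the restriction of $\gamma$ to $\oc{b}{\gamma}$ via compatible quotient maps.

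The main obstacle is bookkeeping rather than conceptual: one must verify that the various orthogonal complements $\oc{a}{\gamma}$, $\oc{b}{\gamma}$, $\oc{b/a}{\ir{\gamma}{a}}$ are compatibly related under the quotient by $a$, i.e. that forming orthogonal complements commutes with isotropic reduction in the appropriate sense. This is a diagram chase entirely analogous to — and no harder than — the ones carried out in Lemmas \ref{restriction and reduction commute} and \ref{commuting reductions}, which is presumably why the authors leave it to the reader; the only subtlety is keeping track of which maps are mono, which are epi, and which squares are pullbacks, exactly as in those proofs.
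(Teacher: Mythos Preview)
Your proposal is correct and is exactly the kind of argument the paper has in mind: the authors omit the proof entirely, stating only that it ``is an elementary diagram chase, which we omit.'' Your outline --- using the lattice isomorphism for subobjects of $\oc{a}{\gamma}/a$, invoking Lemma~\ref{restriction and reduction commute} to transport the isotropy condition across the quotient, and identifying $\oc{b}{\gamma}/a$ with $\oc{(b/a)}{\ir{\gamma}{a}}$ for the second claim --- is precisely the bookkeeping they leave to the reader, and your remarks about which squares are pullbacks and which maps are mono/epi match the pattern of the surrounding Lemmas~\ref{restriction and reduction commute} and~\ref{commuting reductions}.
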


 Since the reduction of a non-degenerate form is non-degenerate, isotropic reduction generates an equivalence relation on $\mwc{}{\cat{A}}$. 
 \begin{theorem}[{See \eg \cite[Theorem { 1.1.32} and Remark { 1.1.33}]{handbook}}]
The equivalence relation on $\mwc{}{\cat{A}}$ generated by isotropic reduction is Witt-equivalence. Hence the set of equivalence classes is $\wgc{}{\cat{A}}$.
\end{theorem}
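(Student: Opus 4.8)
I would prove the theorem by establishing the two inclusions between the equivalence relation $\approx$ on $\mwc{}{\cat{A}}$ generated by isotropic reduction and Witt-equivalence~$\sim$.

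\emph{A single isotropic reduction is a Witt-equivalence.} Since $\sim$ is transitive it suffices to show $\beta\sim\ir{\beta}{a}$ for one non-degenerate symmetric form $\beta\colon b\to Db$ and one isotropic subobject $\imath\colon a\mono b$. Write $\mu:=\ir{\beta}{a}$, a non-degenerate symmetric form on $m:=\oc{a}{\beta}/a$, let $j\colon\oc{a}{\beta}\mono b$ be the inclusion and $q\colon\oc{a}{\beta}\epi m$ the quotient. The plan is to show that $\beta\oplus(-\mu)$ is metabolic, with lagrangian the subobject $L\mono b\oplus m$ that is the image of the monomorphism $g\colon\oc{a}{\beta}\to b\oplus m$ with components $j$ and $q$. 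That $L$ is isotropic is immediate, since pulling $\beta\oplus(-\mu)$ back along $g$ gives $Dj\,\beta\,j-Dq\,\mu\,q$, which vanishes because $Dq\,\mu\,q=\beta|_{\oc{a}{\beta}}=Dj\,\beta\,j$ is exactly the square defining $\mu$ in \S\ref{isotropic reduction}. That $L$ is a lagrangian, \ie that its orthogonal complement $L^\perp$ (for $\beta\oplus(-\mu)$) coincides with $L$, is the one substantive point: the inclusion $L\subseteq L^\perp$ is isotropy, and conversely a generalised element $(u,v)$ of $L^\perp$ satisfies $Dj\,\beta\,u=Dq\,\mu\,v$; composing this with $D\imath_0$, where $\imath_0\colon a\mono\oc{a}{\beta}$ is the factorisation of $\imath$ through $j$ (which exists since $a$ is isotropic) and satisfies $q\imath_0=0$, forces $D\imath\,\beta\,u=0$, so $u$ factors as $j\tilde u$ through $\oc{a}{\beta}=\ker(D\imath\,\beta)$; substituting back and using $Dj\,\beta\,j=Dq\,\mu\,q$ gives $Dq\,\mu\,(q\tilde u-v)=0$, and because $Dq$ is monic ($D$ is exact) and $\mu$ is an isomorphism we get $v=q\tilde u$, so $(u,v)=g\tilde u$ factors through $L$. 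Granting that $\beta\oplus(-\mu)$ is metabolic, and noting that $\mu\oplus(-\mu)$ is metabolic too (diagonal lagrangian), a reordering of summands gives the isometry
\[
\beta\oplus\bigl(\mu\oplus(-\mu)\bigr)\ \cong\ \mu\oplus\bigl(\beta\oplus(-\mu)\bigr)
\]
which, by the definition of Witt-equivalence, shows $\beta\sim\mu=\ir{\beta}{a}$. Hence $\approx\ \subseteq\ \sim$.

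\emph{Witt-equivalence is generated by isotropic reduction.} Two compatibilities are needed. First, reducing a metabolic form along any of its lagrangians gives the zero form on the zero object --- this is the characterisation in \S\ref{isotropic reduction} of when a reduction vanishes --- so $\eta\approx 0$ for every metabolic $\eta$. Second, isotropic reduction commutes with orthogonal sum: if $a_i\mono b_i$ is isotropic for $\beta_i$ then $a_1\oplus a_2$ is isotropic for $\beta_1\oplus\beta_2$, its orthogonal complement is $\oc{a_1}{\beta_1}\oplus\oc{a_2}{\beta_2}$ (duality, hence the formation of orthogonal complements and of reduced forms, commutes with finite direct sums), and $\ir{(\beta_1\oplus\beta_2)}{(a_1\oplus a_2)}\cong\ir{\beta_1}{a_1}\oplus\ir{\beta_2}{a_2}$ by uniqueness of the induced form. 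Applying this with $a_1=0$ and $a_2$ a lagrangian of a metabolic $\eta$ gives $\beta\oplus\eta\approx\beta\oplus 0=\beta$ for every non-degenerate symmetric~$\beta$. Now if $\beta_0\sim\beta_1$, \ie there are metabolic $\eta_0,\eta_1$ with $\beta_0\oplus\eta_0\cong\beta_1\oplus\eta_1$, then --- since $\mwc{}{\cat{A}}$ is a set of \emph{isometry} classes, so this isometry is an equality there --- $\beta_0\approx\beta_0\oplus\eta_0=\beta_1\oplus\eta_1\approx\beta_1$. Hence $\sim\ \subseteq\ \approx$, so $\approx\ =\ \sim$, and the set of equivalence classes is $\wgc{}{\cat{A}}$ by definition.

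\emph{The main obstacle.} Everything is bookkeeping except the claim in the first step that the graph-type subobject $L$ is a lagrangian and not merely isotropic. The verification sketched above is a diagram chase in the abelian category $\cat{A}$ (equivalently, one runs it on elements after a Freyd--Mitchell embedding, discarding the duality functor and keeping track of the suppressed identifications $D\ker f\cong\coker Df$ and $D\coker f\cong\ker Df$), and it is exactly here that the non-degeneracy of $\beta$ --- and hence of the reduction $\ir{\beta}{a}$ --- is essential.
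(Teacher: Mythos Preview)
The paper does not give its own proof of this theorem --- it simply cites \cite[Theorem 1.1.32 and Remark 1.1.33]{handbook}. Your argument is correct and is precisely the standard one found there: the forward direction by exhibiting the graph of $\oc{a}{\beta}\hookrightarrow b\oplus(\oc{a}{\beta}/a)$ as a lagrangian for $\beta\oplus(-\ir{\beta}{a})$, and the reverse direction by noting that metabolic forms reduce to zero along their lagrangians and that reduction respects orthogonal sums. Your identification of the lagrangian verification as the one substantive step is accurate, and the diagram chase you sketch (first composing with $D\imath_0$ to force the $b$-component into $\oc{a}{\beta}$, then using non-degeneracy of $\mu$ and injectivity of $Dq$) is exactly what is needed; note that surjectivity of $b\oplus m\to DL$ comes for free since $\beta\oplus(-\mu)$ is an isomorphism and $Dg$ is an epimorphism.
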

Although isotropic reduction is often only considered for non-degenerate forms it is a natural operation on degenerate forms too. Let $\wg{}{\cat{A}}$ be the set of equivalence classes of the relation generated by isotropic reduction on $\mw{}{\cat{A}}$. Reduction by the kernel defines a map of monoids
\[
\mw{}{\cat{A}} \to \mwc{}{\cat{A}} \colon  \beta \mapsto\overline{\beta}.
\]
By Lemma \ref{commuting reductions} this map preserves the equivalence relation generated by isotropic reduction.  Hence there are maps 
\[
\wgc{}{\cat{A}} \to \wg{}{\cat{A}} \to \wgc{}{\cat{A}}
\]
induced by $\mwc{}{\cat{A}} \hookrightarrow \mw{}{\cat{A}}$ and reduction by the kernel respectively.
\begin{corollary}
\label{deg or nondeg}
These maps are inverse to one another. Hence $\wg{}{\cat{A}}$ is also a group under $\oplus$ and it is isomorphic to the Witt group $\wgc{}{\cat{A}}$. 
\end{corollary}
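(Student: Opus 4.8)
The plan is to verify directly that the two displayed maps $\wgc{}{\cat{A}} \to \wg{}{\cat{A}}$ and $\wg{}{\cat{A}} \to \wgc{}{\cat{A}}$ are mutually inverse, using two elementary observations about reduction by the kernel, and then to transfer the group structure from $\wgc{}{\cat{A}}$ to $\wg{}{\cat{A}}$.

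First I would show the composite $\wgc{}{\cat{A}} \to \wg{}{\cat{A}} \to \wgc{}{\cat{A}}$ is the identity. If $\beta$ is non-degenerate then $\ker\beta = 0$, so $\overline{\beta} = \beta$ already at the level of isometry classes; hence reduction by the kernel simply undoes the inclusion. For the composite $\wg{}{\cat{A}} \to \wgc{}{\cat{A}} \to \wg{}{\cat{A}}$ I would use instead that, for any symmetric $\beta\colon b\to Db$, the subobject $\ker\beta \mono b$ is null, hence isotropic, and $\overline{\beta} = \ir{\beta}{\ker\beta}$ by definition. So $\beta$ and $\overline{\beta}$ differ by a single isotropic reduction and therefore represent the same element of $\wg{}{\cat{A}}$; this composite is also the identity.

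Finally I would assemble the conclusion. Both maps are monoid homomorphisms: the inclusion trivially, and reduction by the kernel because $\ker(\beta\oplus\gamma)\cong\ker\beta\oplus\ker\gamma$ forces $\overline{\beta\oplus\gamma}\cong\overline{\beta}\oplus\overline{\gamma}$. They are well defined on equivalence classes — for reduction by the kernel this is exactly Lemma \ref{commuting reductions}. Being mutually inverse monoid homomorphisms, they are isomorphisms of monoids; since $\wgc{}{\cat{A}}$ is a group, so is $\wg{}{\cat{A}}$, and the two are isomorphic as groups. I do not expect a genuine obstacle here: the only point requiring attention is that $\wg{}{\cat{A}}$ is a priori merely a monoid, so one must know the map out of it is well defined before concluding — and that has already been arranged.
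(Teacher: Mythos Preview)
Your proof is correct and follows essentially the same approach as the paper: one composite is the identity on representatives since $\overline{\beta}=\beta$ for non-degenerate $\beta$, and the other sends $\beta$ to $\overline{\beta}=\ir{\beta}{\ker\beta}$, which is a single isotropic reduction and hence the identity on $\wg{}{\cat{A}}$. The paper's proof is more terse, but you have simply made explicit the points (monoid homomorphism, well-definedness via Lemma~\ref{commuting reductions}) that the paper takes for granted in the surrounding text.
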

\begin{proof}
In one direction the composition is the identity on representatives, and in the other it is isotropic reduction by the kernel. Both induce the identity on Witt groups. 
\end{proof}
Thus one can define the Witt group by using the isotropic reduction relation on either degenerate or on non-degenerate forms.

\subsection{The splitting relation}
\label{the splitting relation}

In this section we introduce a more general relation which allows us to split forms into two pieces. Isotropic reduction corresponds to the special case when one of these pieces is trivial.

\begin{proposition}
\label{splitting relation}
Suppose $\beta \colon  b \to Db$ is a non-degenerate symmetric form and that $0\to a \stackrel{\imath}{\longrightarrow} b \stackrel{q}{\longrightarrow} c \to 0$ is a short exact sequence in $\cat{A}$. Then there are induced symmetric forms $\alpha=\beta|_\imath \colon  a \to Da$ and $\gamma = \beta|_{\beta^{-1}Dq}\colon  Dc \to D^2c$ and 
\[
[\beta]  = [\overline{\alpha}] + [\overline{\gamma}] 
\]
in the Witt group $\wgc{}{\cat{A}}$.
\end{proposition}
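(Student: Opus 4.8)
The plan is to produce an explicit isotropic subobject of the orthogonal sum $\beta \oplus (-\overline{\alpha}) \oplus (-\overline{\gamma})$ whose reduction is the zero form, or — more in the spirit of the preceding subsection — to realize $[\beta]$ as a sum of two isotropic reductions of $\beta$ itself, one producing $[\overline{\alpha}]$ and the other $[\overline{\gamma}]$. The second route seems cleaner, so I would first set up the two relevant isotropic subobjects of $b$. The subobject $\imath\colon a \mono b$ is isotropic for $\beta$ precisely because $\alpha=\beta|_\imath$ is a form on $a$ and we are \emph{not} assuming it vanishes — wait, that is wrong: $a$ need not be $\beta$-isotropic. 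So instead I would work with the kernel-type subobjects coming from $\imath$ and from $q$: consider $a^\beta=\ker(D\imath\,\beta)$, the orthogonal complement of $a$, and dually the subobject $\beta^{-1}(Dq)\colon Dc \to b$ image, call it $a'$, whose orthogonal complement is $a'^{\beta}$. The point of the hypotheses is that, $\beta$ being non-degenerate, $D\imath\,\beta$ is an epimorphism (being $D$ of the monomorphism $\imath$ composed with an iso), so $a^\beta \mono b$ fits in $0\to a^\beta \to b \to Da \to 0$; and symmetrically for $a'$. Then $\alpha=\beta|_\imath$ factors as the restriction of $\beta|_{a^\beta}$ along $a\mono a^\beta$, and since $a\mono a^\beta$ is null for $\beta|_{a^\beta}$ (this is exactly the displayed computation $D(a^\beta)\cong \coker(\beta\imath)$ in the text, and $a$ is its kernel when $\beta$ is non-degenerate), the reduction $\ir{\beta}{a}$ equals $\overline{\beta|_{a^\beta}}$. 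I would check that $\overline{\beta|_{a^\beta}}\cong\overline{\alpha}$ by a direct inspection: $\im(\beta|_{a^\beta})$ is the same as $\im\alpha$ up to canonical identification because $a\mono a^\beta$ is null, so the non-degenerate forms they induce on their images agree.

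Next, the key step: iterate the isotropic reduction in stages via Lemma~\ref{reduction in stages}. I would show that inside $\oc{a}{\beta}/a = $ (the reduction $\ir{\beta}{a}$) there sits an isotropic subobject whose further reduction is trivial, witnessing $[\ir{\beta}{a}]=[\overline{\gamma}]$; equivalently, that $\oc{a}{\beta}$ contains a factorisation $a \mono b' \mono \oc{a}{\beta}$ with $b'$ isotropic and $\ir{\beta}{b'}=\overline{\gamma}$. The natural candidate for $b'$ is $\oc{a}{\beta}\cap a'$ where $a' = \im(\beta^{-1}Dq)$; equivalently the subobject of $b$ that is simultaneously orthogonal to $a$ and lies in the `$Dc$-part'. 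One then computes its orthogonal complement and checks, using that $0\to a\to b\to c\to 0$ is exact and $\beta$ is non-degenerate, that $\ir{\beta}{b'} = \overline{\gamma}$. Finally, by the additivity of Witt classes under the splitting that $b' \supseteq a$ gives, $[\beta] = [\overline{\beta|_{a^\beta}}]+\big[\text{something Witt-trivial}\big] + [\overline{\gamma}]$, but I should be careful: the clean statement is $[\beta]=[\ir{\beta}{a}]$ already (isotropic reduction preserves the Witt class), and separately $[\beta]=[\ir{\beta}{a'}]$; combining these is not automatic. The robust argument is therefore to exhibit a single isotropic subobject $\ell$ of the orthogonal sum $\beta \oplus (-\overline{\alpha})\oplus(-\overline{\gamma})$ that is in fact lagrangian, using the diagonal-type construction: $\ell = \{(x, \pi_a(x), \pi_c(x))\}$ suitably interpreted, where $\pi_a\colon a^\beta \epi \im\alpha$ and $\pi_c\colon a'^\beta \epi \im\gamma$ are the kernel-reduction quotients, restricted to the subobject $a^\beta \cap a'^\beta$ of $b$ and extended.

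The cleanest formalisation, and the one I would actually write, is: apply Lemma~\ref{restriction and reduction commute} and Lemma~\ref{commuting reductions} twice. Concretely, $b$ contains the isotropic subobject $a^\beta \cap a'^{\beta}$ — call it $k$ — which I claim is $\beta$-null (because it is orthogonal to both $a$ and $Dc$, and $a$ together with the image of $\beta^{-1}Dq$ generate $b$ since the sequence is exact and $\beta$ is an iso; dualising, orthogonality to both kills it). So $\beta$ descends to a non-degenerate form on $b/k$, and $b/k$ splits as the orthogonal direct sum of (the reduction giving) $\overline\alpha$ and (the reduction giving) $\overline\gamma$ — this last orthogonal-decomposition statement is where I'd spend the real effort, verifying that $a^\beta/k$ and $a'^\beta/k$ are mutually orthogonal, together span $b/k$, and carry the forms $\overline\alpha$ and $\overline\gamma$ respectively. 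Granting that, $[\beta]=[\beta\ \mathrm{on}\ b/k]=[\overline\alpha]+[\overline\gamma]$ in $\wgc{}{\cat A}$, using that isotropic (here null) reduction preserves the Witt class. \textbf{The main obstacle} is precisely the orthogonal-splitting of $b/k$: showing $b/k \cong a^\beta/k \oplus a'^\beta/k$ as forms. This requires the short exact sequence hypothesis in an essential way (it fails without non-degeneracy of $\beta$), and the bookkeeping with the Verdier-type identifications $D(a^\beta)\cong \coker(\beta\imath)$ etc. is the delicate part; I expect it to come down to one moderately large commutative diagram plus the observation that $a \mono a^\beta$ and $Dc \mono a'^\beta$ are complementary null subobjects whose quotients recover $\overline\alpha$ and $\overline\gamma$.
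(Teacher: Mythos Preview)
Your final approach lands on the right isotropic subobject but misidentifies its nature, and this is a genuine gap. You set $k=a^\beta\cap a'^{\beta}$, where $a'$ is the image of $\beta^{-1}Dq$. Since $\beta$ is an isomorphism, $a'=\beta^{-1}(\im Dq)=\beta^{-1}(\ker D\imath)=\ker(D\imath\,\beta)=a^\beta$; hence $a'^{\beta}=(a^\beta)^\beta=a$ and $k=a\cap a^\beta=\ker\alpha$. This is exactly the subobject the paper uses. However, $k$ is \emph{not} $\beta$-null: because $\beta$ is non-degenerate, the only null subobject is $0$, so your claim forces $\ker\alpha=0$, i.e.\ $\alpha$ non-degenerate, which is precisely the case you must not assume. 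Your spanning argument ``$a+a'=b$'' fails for the same reason: $a+a^\beta=b$ holds iff $\beta|_a$ is non-degenerate. What is true is that $k$ is $\beta$-\emph{isotropic}, so the reduced form lives on $\oc{k}{\beta}/k$, not on $b/k$; the orthogonal splitting you want (into pieces carrying $\overline\alpha$ and $\overline\gamma$) takes place there.

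Once this is corrected, your argument becomes the paper's: one checks there is a canonical isomorphism $\ker\alpha\cong\ker\gamma$ (both sit inside $b$ via $\imath$ and $\beta^{-1}Dq$, and the diagram forces the identification), applies Lemma~\ref{restriction and reduction commute} to both inclusions $a\hookrightarrow b\hookleftarrow Dc$ simultaneously, and obtains $\im\alpha\hookrightarrow \oc{k}{\beta}/k \hookleftarrow \im\gamma$ with the diagonal $\im\alpha\to\oc{k}{\beta}/k\to\coim D\gamma$ still short exact. Since $\overline\alpha$ and $\overline\gamma$ are now non-degenerate, the map $\im\alpha\oplus\im\gamma\to\oc{k}{\beta}/k$ is an isomorphism under which the reduced form is block-diagonal $\overline\alpha\oplus\overline\gamma$; hence $[\beta]=[\ir{\beta}{k}]=[\overline\alpha]+[\overline\gamma]$. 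Your earlier paragraphs contain further slips of the same flavour: you write ``$a\mono a^\beta$'' and ``$\overline{\beta|_{a^\beta}}\cong\overline\alpha$'', but $a\subset a^\beta$ would again mean $a$ is isotropic, and in fact $a^\beta\cong Dc$ with $\beta|_{a^\beta}\cong\gamma$ (this is Remark~\ref{splitting remarks}(1)), so that reduction yields $\overline\gamma$, not $\overline\alpha$.
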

\begin{proof}
There is a unique isomorphism $f\colon \ker \alpha \to \ker\gamma$ such that
\[
\begin{tikzcd}
\ker \alpha \ar[dashed]{rrrr}{f} \ar[hook]{d} &&&& \ker \gamma  \ar[hook]{d}\\
a \ar{d}{\alpha} \ar[hook]{rr}{\imath}  && b \ar{d}{\beta} && Dc \ar{d}{\gamma} \ar[hook, swap]{ll}{\beta^{-1}Dq}\\
Da && Db \ar[two heads]{ll}{D\imath} \ar[two heads, swap]{rr}{\chi(c)q\beta^{-1}}&& D^2c  
\end{tikzcd}
\]
commutes. Let $k=\ker\alpha\cong \ker \gamma$. We can apply Lemma \ref{restriction and reduction commute} simultaneously to both lower squares of the above diagram to obtain a new diagram
\[
\begin{tikzcd}
\im\alpha \ar{d}{\overline{\alpha}} \ar[hook]{rr}  && \oc{k}{\beta}/k \ar{d}{\ir{\beta}{k}} && \im\gamma \ar{d}{\overline{\gamma}} \ar[hook]{ll}\\
\coim D\alpha && D\left( \oc{k}{\beta}/k\right) \ar[two heads]{ll} \ar[two heads]{rr}&&\coim D\gamma  
\end{tikzcd}
\]
in which the vertical arrows are isomorphisms. Furthermore we can check from the construction of Lemma \ref{restriction and reduction commute} that the diagonal 
\[
\im \alpha \mono \oc{k}{\beta}/k \epi \coim D\gamma
\]
 of this new diagram is still short exact (and the other diagonal is the dual short exact sequence). 

Thus we can reduce to the special case in which $\alpha$ and  $\gamma$ are non-degenerate. In this case $\left( \imath \quad \beta^{-1}Dq\right) \colon  a \oplus Dc \to b$ is an isomorphism and
\[
\left(\begin{array}{c} D\imath \\ D^2q D\beta^{-1}\end{array}\right) \beta \left(\begin{array}{ll} \imath & \beta^{-1}Dq\end{array}\right)
= \left(\begin{array}{ll}  \alpha & 0 \\ 0 & \gamma \end{array}\right).
\]
So $\beta\cong \alpha\oplus \gamma$ and $[\beta]  = [\alpha] + [\gamma]$. More generally this argument shows that $[\beta] = [\ir{\beta}{k}] = [\overline{\alpha}]+[\overline{\gamma}]$.
\end{proof}
\begin{remarks}
\label{splitting remarks}
\begin{enumerate}
\item In the situation of the above lemma $\oc{a}{\beta}\cong Dc$ and the restricted form $\gamma$  is isometric to $\beta|_{\oc{a}{\beta}}$. Hence the splitting relation can be written in $\wg{}{\cat{A}}$ as
\[
[\beta] = [\beta|_a] + [\beta|_{\oc{a}{\beta}}].
\]
\item The proposition shows that the splitting relation holds in $\wgc{}{\cat{A}}$. Conversely we could define $\wgc{}{\cat{A}}$ using the splitting relation, for both the relation of isometry and that arising from isotropic reduction are special cases obtained respectively by putting $c=0$ and $\alpha=0$.
\item If $\beta$ is anisotropic then the proof provides an isometry $\beta \cong \alpha \oplus \gamma$, in particular $\alpha$ and $\gamma$ are also non-degenerate and anisotropic.
\end{enumerate}
\end{remarks}

The following result is a minor generalisation of the splitting relation. 
\begin{corollary}
\label{splitting cor}
Suppose $\beta \colon  b \to Db$ is a non-degenerate symmetric form and that $a \stackrel{f}{\longrightarrow} b \stackrel{g}{\longrightarrow} c$ is exact at the middle term. Then there are induced symmetric forms $\alpha=\beta|_f \colon  a \to Da$ and $\gamma = \beta|_{\beta^{-1}Dg}\colon  Dc \to D^2c$ such that $[\beta]  = [\overline{\alpha}] + [\overline{\gamma}]$ in the Witt group $\wgc{}{\cat{A}}$.
\end{corollary}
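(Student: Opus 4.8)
The plan is to reduce Corollary~\ref{splitting cor} to Proposition~\ref{splitting relation} by factoring the maps $f$ and $g$ through their images and using the fact that the splitting relation, once established for non-degenerate $\alpha$ and $\gamma$, is insensitive to passing to reductions by kernels. First I would replace $f\colon a\to b$ by the epimorphism $a\epi \im f$ followed by the monomorphism $\im f\mono b$; since $\beta|_f$ on $a$ is the pullback of $\beta|_{\im f}$ on $\im f$ along the epimorphism $a\epi\im f$, and reduction by the kernel is unaffected by this (the induced non-degenerate forms $\overline{\beta|_f}$ and $\overline{\beta|_{\im f}}$ are isometric), I may assume $f$ is the inclusion of the subobject $\im f\mono b$. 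Dually, $g\colon b\to c$ factors as $b\epi \im g\mono c$, and $\beta|_{\beta^{-1}Dg}$ on $Dc$ is the pullback along $Dc\epi D(\im g)$ of the analogous form on $D(\im g)$; again this does not change the reduced form.

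Next I would observe that exactness of $a\stackrel{f}{\to} b\stackrel{g}{\to} c$ at the middle means precisely that $\im f=\ker g$ as subobjects of $b$. So after the reductions above we are in the situation where $\imath\colon \im f\mono b$ is a subobject, $q\colon b\epi b/\im f$ is the cokernel, and $b/\im f\cong \im g$ via the map induced by $g$. Thus the short exact sequence $0\to \im f\to b\to b/\im f\to 0$ is exactly the one required by Proposition~\ref{splitting relation}, and its conclusion gives $[\beta]=[\overline{\beta|_\imath}]+[\overline{\gamma'}]$ where $\gamma'=\beta|_{\beta^{-1}Dq}$ on $D(b/\im f)$. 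It remains to identify $\overline{\gamma'}$ with $\overline{\gamma}=\overline{\beta|_{\beta^{-1}Dg}}$: since $g=(b\epi b/\im f \xrightarrow{\sim}\im g\mono c)$, we have $Dg$ factoring through $Dq$, so $\beta^{-1}Dg$ factors through $\beta^{-1}Dq$ via the composite $Dc\epi D(\im g)\xrightarrow{\sim} D(b/\im f)$; the restriction of $\beta$ along this composite is the pullback of $\gamma'$ along an epimorphism, hence has the same reduction by the kernel. Combining, $[\overline{\gamma}]=[\overline{\gamma'}]$ and the formula follows.

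The only genuinely delicate point — and the one I would spell out with a small diagram — is the compatibility of reduction-by-the-kernel with pulling a symmetric form back along an epimorphism: if $p\colon a'\epi a$ is an epimorphism and $\delta\colon a\to Da$ is symmetric, then $\overline{\delta|_p}\cong\overline{\delta}$. This is because $\ker(\delta|_p)$ is the preimage $p^{-1}(\ker\delta)$ (using exactness of $D$ and a kernel–cokernel chase), so $a'/\ker(\delta|_p)\cong a/\ker\delta$, and the induced non-degenerate forms agree under this isomorphism. Everything else is bookkeeping: the dual statement for monomorphisms, and tracking that the two diagonals of the relevant square stay (mutually dual) short exact sequences, both of which are already contained in or immediate from the proof of Proposition~\ref{splitting relation} and Lemma~\ref{commuting reductions}. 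I do not anticipate any obstacle beyond writing these identifications carefully; the corollary is, as the text says, a minor generalisation.
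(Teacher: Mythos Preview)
Your proposal is correct and follows essentially the same approach as the paper's proof. Both arguments reduce to the short exact sequence $0\to\im f\to b\to\im g\to 0$ and apply Proposition~\ref{splitting relation}; the paper phrases the passage from $\alpha$ to $\beta|_{\im f}$ as the isotropic reduction $\ir{\alpha}{\ker f}$ and then invokes Lemma~\ref{commuting reductions} (noting that $\overline{\ker f}=0$ in $\im\alpha$), whereas you spell out directly that pulling a symmetric form back along an epimorphism leaves the associated non-degenerate form unchanged --- these are the same observation.
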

\begin{proof}
Replacing $\alpha$ by $\ir{\alpha}{\ker f}$ and $\gamma$ by $\ir{\gamma}{\ker Dg}$ we are in the situation of Proposition \ref{splitting relation}. Hence, using Lemma \ref{commuting reductions},
\begin{align*}
[\beta] &= \left[\,\overline{\ir{\alpha}{\ker f}}\,\right] + \left[\,\overline{\ir{\gamma}{\ker Dg}}\,\right]\\
&= \left[\, \ir{\overline{\alpha}}{\overline{\ker f}} \right] + \left[\,\ir{\overline{\gamma}}{\overline{\ker Dg}}\,\right]\\
&= \left[\, \overline{\alpha}\right] + \left[\, \overline{\gamma}\,\right].
\end{align*}
\end{proof}

In the presence of an exact duality the following are equivalent (the last two by the Jordan--H\"older theorem):  
\begin{enumerate}
\item $\cat{A}$ is noetherian;
\item $\cat{A}$ is artinian;
\item $\cat{A}$ is artinian and noetherian;
\item $\cat{A}$ is a length category, \ie each object has a finite composition series with simple factors.
\end{enumerate}
Under these conditions the Witt group has a more explicit description. 
\begin{corollary}
\label{simples generate witt gp}
Suppose $\cat{A}$ is noetherian. Then the Witt group $\wgc{}{\cat{A}}$ is the set of isometry classes of anisotropic forms. The group operation is given by choosing an anisotropic representative for the direct sum. Any anisotropic form is isometric to a direct sum of non-degenerate symmetric forms on simple objects of $\cat{A}$. In particular the Witt group is generated by forms on simple objects.
\end{corollary}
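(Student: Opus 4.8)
The plan is to combine the splitting relation (Corollary \ref{splitting cor}) with noetherian induction to reduce an arbitrary non-degenerate symmetric form to an anisotropic one, and then to decompose any anisotropic form into forms on simple objects. First I would observe that every symmetric form $\beta\colon b \to Db$ with $b\neq 0$ either is anisotropic or admits a non-zero isotropic subobject $\imath\colon a\mono b$. In the latter case Remark \ref{splitting remarks}(1) gives $[\beta] = [\beta|_a] + [\beta|_{\oc{a}{\beta}}]$ in $\wg{}{\cat{A}}$, equivalently $[\beta] = [\overline{\beta|_a}] + [\overline{\beta|_{\oc{a}{\beta}}}]$ in $\wgc{}{\cat{A}}$ via Corollary \ref{deg or nondeg}. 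Since $a$ is $\beta$-isotropic, $\beta|_a = 0$, so the first summand vanishes and $[\beta] = [\ir{\beta}{a}]$, where $\ir{\beta}{a}$ is a non-degenerate form on the strictly smaller object $\oc{a}{\beta}/a$ (strictly smaller because $a\neq 0$ and, $\beta$ being non-degenerate, $\oc{a}{\beta}$ is a proper subobject of $b$, cf.\ the discussion of orthogonal complements). Because $\cat{A}$ is noetherian (equivalently artinian, equivalently a length category), this reduction process terminates, so $[\beta]$ is represented by an anisotropic form.

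Next I would check that this anisotropic representative is unique up to isometry, so that the Witt group is genuinely the set of isometry classes of anisotropic forms. Suppose $\alpha$ and $\alpha'$ are anisotropic and Witt-equivalent. By Theorem on isotropic reduction they are connected by a chain of isotropic reductions through $\mwc{}{\cat{A}}$; but an anisotropic non-degenerate form admits only the zero isotropic subobject, so no non-trivial reduction can be applied to it or produce it except from a form containing it as an orthogonal summand. The cleanest way to conclude is to invoke Remark \ref{splitting remarks}(3): if $\beta$ is anisotropic and $0\to a\to b\to c\to 0$ is any short exact sequence, the splitting relation is realised by an honest orthogonal decomposition $\beta\cong \beta|_a\oplus\beta|_{\oc{a}{\beta}}$ with both pieces anisotropic. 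Applying this to $\beta = \alpha\oplus(-\alpha')\oplus\eta$ for the metabolic $\eta$ witnessing Witt-equivalence, and using that a metabolic form has a lagrangian (hence a non-zero isotropic subobject unless it is zero), one sees any isotropic subobject of $\alpha\oplus(-\alpha')$ must be zero; standard Witt-cancellation then forces $\alpha\cong\alpha'$. The group operation on these classes is: form the direct sum $\alpha\oplus\alpha'$, then reduce to its (unique) anisotropic representative.

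Finally, to decompose an anisotropic form $\alpha\colon a\to Da$ into forms on simple objects, I would induct on the length of $a$. If $a$ is simple there is nothing to prove. Otherwise pick a simple subobject $s\mono a$. Since $\alpha$ is anisotropic, $s$ is not isotropic, so the restriction $\alpha|_s\colon s\to Ds$ is non-zero; as $Ds$ is also simple (duality is exact, hence preserves the length category structure and, being a duality, sends simples to simples), $\alpha|_s$ is an isomorphism, i.e.\ $\alpha|_s$ is a non-degenerate form on the simple object $s$. Then $s$ is a subobject with non-degenerate restriction, so $a\cong s\oplus \oc{s}{\alpha}$ orthogonally (the composite $s\mono a\to Da\to Ds$ being an isomorphism splits the inclusion), and $\alpha\cong \alpha|_s \oplus \alpha|_{\oc{s}{\alpha}}$. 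The orthogonal summand $\alpha|_{\oc{s}{\alpha}}$ is again anisotropic, on an object of strictly smaller length, so by induction it decomposes as a direct sum of forms on simple objects; appending $\alpha|_s$ completes the decomposition. In particular $\wgc{}{\cat{A}}$ is generated by classes of forms on simple objects.

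I expect the main obstacle to be the uniqueness/cancellation step in the second paragraph: one must argue carefully that anisotropic representatives are unique, since the bare splitting relation only gives existence. The key leverage is Remark \ref{splitting remarks}(3), which upgrades the splitting relation to a genuine orthogonal direct-sum decomposition whenever the ambient form is anisotropic, turning the Witt-theoretic statement into an elementary argument about orthogonal complements of simple subobjects.
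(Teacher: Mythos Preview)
Your arguments for existence of an anisotropic representative and for the decomposition of an anisotropic form into forms on simple objects are essentially the paper's: the paper obtains the anisotropic representative by reducing by a maximal isotropic subobject (whose existence follows from Lemma \ref{reduction in stages} and the noetherian hypothesis), and obtains the decomposition into simples exactly via Remark \ref{splitting remarks}(3) and noetherian induction.

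The gap is in your uniqueness step. Your claim that ``any isotropic subobject of $\alpha\oplus(-\alpha')$ must be zero'' is false: if $\alpha=\alpha'$ then the diagonal copy of $a$ inside $a\oplus a$ is isotropic for $\alpha\oplus(-\alpha)$, and more generally anisotropy of $\alpha$ and $\alpha'$ does not force anisotropy of $\alpha\oplus(-\alpha')$. Consequently the appeal to ``standard Witt-cancellation'' is exactly what cannot be taken for granted here; indeed the paper explicitly notes that the Witt Cancellation Theorem may fail in this generality. The paper resolves this by citing \cite[Theorem 4.9]{youssin}, which proves directly that two anisotropic forms representing the same Witt class are isometric. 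That result is not a formal consequence of the splitting relation or of Remark \ref{splitting remarks}(3), so you would either need to reproduce Youssin's argument or cite it.
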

\begin{proof}
If $\beta \colon  b \to Db$ is a symmetric form then Lemma \ref{reduction in stages} and the noetherian property guarantee that there is a maximal isotropic subobject $a \mono b$. The reduction $\ir{\beta}{a}$ is thus an anisotropic representative for $[\beta]$. Youssin \cite[Theorem 4.9]{youssin} shows that anisotropic forms represent the same Witt class if and only if they are isometric. (In other words, even though the Witt Cancellation Theorem may not hold, its conclusion remains true for anisotropic forms.) Finally, by the third part of Remarks \ref{splitting remarks}, and another application of the noetherian property, we can write an anisotropic form as a finite direct sum of forms on simple objects.  
\end{proof}
The Witt group is not necessarily freely generated by forms on simple objects (as claimed in \cite{youssin}) as can be seen by considering, for example,  the categories of vector spaces over $\Q$ or $\C$ whose Witt groups have torsion. However, it does have a canonical direct sum decomposition into Witt groups of the Serre subcategories generated by self-dual simple objects. This is well-known, see for example \cite[\S 6]{MR543249} or \cite[Chapter 5]{sheiham}, although the usual proof uses Hermitian devissage rather than our splitting relation.
\begin{corollary}
\label{direct-sum-cor}
Suppose $\cat{A}$ is noetherian. Then there is an isomorphism 
\[
\wgc{}{\cat{A}} \cong \bigoplus_{[s\cong Ds]} \wgc{}{\langle s \rangle}
\]
where the direct sum is over isomorphism classes of self-dual simple objects and  $\langle s \rangle$ denotes the full Serre subcategory generated  by self-extensions of $s$.
\end{corollary}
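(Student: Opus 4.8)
The plan is to use Corollary~\ref{simples generate witt gp}, which tells us that every Witt class has an anisotropic representative and that every anisotropic form decomposes as a finite orthogonal direct sum of non-degenerate symmetric forms on simple objects. The first step is to observe that such a form $\alpha_1 \oplus \cdots \oplus \alpha_n$ on simple objects can be regrouped. A non-degenerate symmetric form on a simple object $s$ forces $s \cong Ds$ (the form $\alpha\colon s \to Ds$ is an isomorphism); conversely, a form on $s_i$ can only interact with a form on $s_j$ within the same Witt class if there is a non-zero map between them, hence $s_i \cong s_j$ by Schur's lemma. So after grouping the summands by isomorphism class of the underlying simple object, we obtain for each self-dual simple $s$ a form $\alpha_s$ whose underlying object lies in $\langle s\rangle$ (in fact is semisimple, a direct sum of copies of $s$). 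This gives a candidate map $\bigoplus_{[s\cong Ds]} \wgc{}{\langle s \rangle} \to \wgc{}{\cat{A}}$ in the other direction, induced by the exact inclusions $\langle s\rangle \hookrightarrow \cat{A}$, and a candidate map back sending $[\beta]$ to the tuple $([\alpha_s])_s$ obtained from an anisotropic representative.

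The second step is to check these are well-defined mutually inverse homomorphisms. The map $\bigoplus_s \wgc{}{\langle s\rangle} \to \wgc{}{\cat{A}}$ is induced by exact duality-preserving functors, so it is a group homomorphism by the remarks at the end of \S\ref{witt groups of abelian categories}. For the reverse direction the key point is that the decomposition of an anisotropic form into its isotypic pieces is unique up to isometry: by Youssin's theorem (cited in Corollary~\ref{simples generate witt gp}) anisotropic forms in the same Witt class are isometric, and an isometry between $\bigoplus_s \alpha_s$ and $\bigoplus_s \alpha'_s$ must respect the isotypic decomposition of the underlying semisimple object, hence restricts to isometries $\alpha_s \cong \alpha'_s$ for each $s$. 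So $[\beta] \mapsto ([\alpha_s])_s$ is well-defined; additivity follows because an anisotropic representative of a direct sum is obtained by choosing anisotropic representatives of the summands and then reducing, and this reduction respects the isotypic splitting (a reduction of a form on $\langle s\rangle$ stays in $\langle s\rangle$ since $\langle s\rangle$ is a Serre subcategory). That the two composites are the identity is then immediate: starting from a tuple and reassembling gives back an anisotropic form with the same isotypic pieces, and starting from $[\beta]$, pushing the pieces into $\cat{A}$ and taking their orthogonal sum recovers the chosen anisotropic representative.

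The main obstacle is the bookkeeping around uniqueness of the isotypic decomposition: one must be careful that a non-degenerate symmetric form on a semisimple object $s^{\oplus k}$ need not be diagonal with respect to a chosen decomposition into copies of $s$, so the matching of summands is at the level of the whole isotypic component $\langle s\rangle$-part, not individual simples. This is exactly why we land in $\wgc{}{\langle s\rangle}$ rather than something freely generated, and it is also why the decomposition is canonical — it does not depend on any choice of ordering or of anisotropic representative, only on the isotypic decomposition of semisimple objects, which is canonical. One should also note that $\langle s\rangle$ inherits an exact duality from $\cat{A}$ (it is stable under $D$ since $Ds \cong s$ and $D$ is exact, so it sends self-extensions of $s$ to self-extensions of $s$), so $\wgc{}{\langle s\rangle}$ makes sense; and that $\langle s\rangle$ is noetherian, being a Serre subcategory of a noetherian category, so Corollary~\ref{simples generate witt gp} applies there too. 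Everything else is routine diagram-checking of the kind already carried out in \S\ref{isotropic reduction} and \S\ref{the splitting relation}.
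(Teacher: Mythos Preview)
Your proposal is correct and follows essentially the same approach as the paper: build the map $\bigoplus_{[s\cong Ds]} \wgc{}{\langle s\rangle} \to \wgc{}{\cat{A}}$ from the exact duality-preserving inclusions, use Corollary~\ref{simples generate witt gp} for surjectivity, and use Youssin's uniqueness of anisotropic representatives together with the fact that isometries preserve isotypic components for injectivity. The only difference is presentational --- you construct an explicit inverse and verify both composites, whereas the paper argues injectivity and surjectivity of the single map directly --- but the substantive ideas are identical.
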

\begin{proof}
Suppose $s$ is a self-dual simple object. Then the duality $D$ restricts to a duality on the full Serre subcategory $\langle s \rangle$ and the inclusion $\imath_s$ is an exact functor which commutes with duality. Hence there are induced maps $\wgc{}{\imath_s}\colon  \wgc{}{\langle s \rangle} \to \wgc{}{\cat{A}}$ and combining these a map
\[
\bigoplus_{[s\cong Ds]} \wgc{}{\langle s \rangle} \to \wgc{}{\cat{A}}.
\]
It is surjective by the last part of Corollary \ref{simples generate witt gp}. Moreover, the description of the Witt group as isometry classes of anisotropic forms shows that it is injective; an isometry must preserve the summand consisting of forms on self-extensions of a given simple object.
\end{proof}

\subsection{Balmer--Witt groups of triangulated categories}
\label{triangulated witt groups}
A triangulated category $\cat{B}$ with duality has $4$-periodic Balmer--Witt groups. Proposition \ref{abelian-triangulated relation} below expresses the Witt groups of the abelian heart of a self-dual $t$-structure on $\cat{B}$ in terms of the Balmer--Witt groups of $\cat{B}$. This is closely related to \cite[Theorem 4.3]{twg2} which treats the special case in which the triangulated category is the bounded derived category of the heart (but which works in the more general setting of the derived category of an exact category). See also \cite[Theorem 7.4]{youssin} where the analogous result is proved for a slightly different definition of triangulated Witt group. 

Suppose $\cat{B}$ is triangulated with shift functor $[1]$. Exact triangles in $\cat{B}$ will be denoted either by $a\to b\to c \to a[1]$ or by a diagram
\[
\begin{tikzcd}
a \ar{rr} && b \ar{dl} \\
& c \ar[dashed]{ul} & 
\end{tikzcd}
\]
where the dotted arrow denotes a map $c \to a[1]$. In order that the Balmer--Witt groups of $\cat{B}$ are defined and well-behaved we will always assume that
\begin{enumerate}
\item  $\cat{B}$ is essentially small so that isomorphism classes of objects form a set;
\item  $\cat{B}$ satisfies the \emph{enriched octahedral axiom};
\item $2$ is invertible in $\cat{B}$, \ie given $\alpha \in \mor{a}{b}$ there exists $\alpha'$ with $\alpha = 2 \alpha'$.
\end{enumerate}
As noted in \cite[Remarque 1.1.13]{bbd} and \cite{twg1} the second property is satisfied by all commonly met triangulated categories, in particular by derived categories. It also passes to triangulated subcategories and to localisations. 

Suppose that $D$ is a triangulated duality on $\cat{B}$ with natural transformation $\chi\colon  \id \to D^2$. Then one can define Balmer--Witt groups $W_i(\cat{B})$ for $i\in \Z$, see \cite{twg1} but note that we use homological indexing rather than cohomological so that our $W_i(\cat{B})$ corresponds to Balmer's $W^{-i}(\cat{B})$. The group $W_0(\cat{B})$ is the quotient of the Witt monoid by the submonoid generated by  \defn{metabolic} forms (or \defn{neutral} forms in the terminology of \cite{twg1}), \ie non-degenerate forms $\beta\colon b\to Db$ for which there is a \defn{lagrangian} $\alpha\colon  a\to b$ such that the triangle
\[
\begin{tikzcd}
a \ar{r}{\alpha} & b \ar{rr}{D\alpha\cdot \beta}&& Da \ar{r}{\gamma}&  a[1]
\end{tikzcd}
\]
is exact  and $\gamma$ is symmetric, \ie $(D\gamma)[1] = \chi_{a[1]} \gamma$. The group $W_i(\cat{B})$ is defined similarly but using the shifted duality $c\mapsto (Dc)[-i]$ with natural isomorphism 
\[
(-1)^{i(i-1)/2}\chi\colon \id \to (D[-i])^2.
\]
 Although the shifted duality is not triangulated when $i$ is odd it is still a $\delta$-functor, and this suffices for the construction. In contrast to the abelian case $[\beta]=0$ if and only if $\beta$ is metabolic \cite[Theorem 3.5]{twg1}. There are natural isomorphisms $W_i(\cat{B})\cong W_{i+4}(\cat{B})$ given by $[\beta] \mapsto [\beta[-2]]$ so that the groups are $4$-periodic. The Balmer--Witt groups are functorial under triangulated functors which commute with duality since these preserve metabolic forms.
 
Recall that a \defn{$t$-structure} on $\cat{B}$ is a strict, full subcategory $\cat{B}^{\leq 0}\subset \cat{B}$ such that $\cat{B}^{\leq 0}[1] \subset \cat{B}^{\leq 0}$ and for each $c\in \cat{B}$ there is an exact triangle
\[
\tau^{\leq 0}c \to c \to \tau^{>0}c \to \tau^{\leq 0}c [1]
\]
with $\tau^{\leq 0}c\in \cat{B}^{\leq 0}$ and $\tau^{>0}c\in \cat{B}^{> 0}$ where the latter is the full subcategory on those objects $c$ such that $\mor{b}{c}=0$ for all $b\in \cat{B}^{\leq 0}$. Indeed the existence of these triangles implies that $\cat{B}^{\leq 0}$ is right admissible, with right adjoint $\tau^{\leq 0}$ to its inclusion and that $\cat{B}^{> 0}$ is left admissible with left adjoint $\tau^{>0}$ to its inclusion. These adjoints are referred to as truncation functors. The exact triangle associated to an object $c$ is unique (up to isomorphism) and the first two maps in it come respectively from the counit and unit of the adjunctions. 

Let $\cat{B}^{\leq n} = \cat{B}^{\leq 0}[-n]$ with left adjoint $\tau^{\leq n}$ to its inclusion, and define $\cat{B}^{\geq n}$ and the right adjoint $\tau^{\geq n}$ to its inclusion similarly. The subcategory $\cat{B}^0=\cat{B}^{\leq 0} \cap \cat{B}^{\geq 0}$ is abelian \cite[Th\'eor\`eme 1.3.6]{bbd} and is known as the \defn{heart} of the $t$-structure. The functor $H^0 = \tau^{\leq0}\tau^{\geq 0}\colon \cat{B} \to \cat{B}^0$ is cohomological, \ie takes exact triangles to long exact sequences. 

A triangulated functor $F\colon  \cat{B} \to \cat{C}$ between categories with respective $t$-structures $\cat{B}^{\leq 0}$ and $\cat{C}^{\leq 0}$ is \defn{left $t$-exact} if $F\cat{B}^{\geq 0} \subset \cat{C}^{\geq 0}$, \defn{right $t$-exact} if $F\cat{B}^{\leq 0} \subset \cat{C}^{\leq 0}$ and \defn{$t$-exact} if it is both left and right $t$-exact. The induced functor ${}^pF :=H^0F$ between the abelian hearts --- the peculiar notation arises from the original occurrence in \cite{bbd} of these notions in the context of perverse sheaves --- is respectively left exact, right exact and exact accordingly. 

If $D$ is an exact duality on $\cat{B}$ then one can check that $D(\cat{B}^{\geq 0})$ is also a $t$-structure. We refer to this as the dual $t$-structure, and say a $t$-structure is \defn{self-dual} if $\cat{B}^{\leq 0}=D(\cat{B}^{\geq 0})$. The duality $D$ restricts to an exact duality on the heart of a self-dual $t$-structure. Conversely, if the $t$-structure is bounded and the heart is invariant under duality then the $t$-structure is self-dual. If in addition the heart is a length category then this is equivalent to the set of simple objects being invariant under duality. 

\begin{proposition}
\label{abelian-triangulated relation}
Suppose $\cat{B}$ is a triangulated category with exact duality $D$ and $\cat{B}^0$ is the heart of a self-dual $t$-structure on $\cat{B}$. Then 
\[
W_i(\cat{B}) \cong 
\begin{cases}
W(\cat{B}^0) & i=0 \mod 4\\
W_-(\cat{B}^0) & i=2 \mod 4\\
0 & i=1 \mod 2.
\end{cases}
\]
\end{proposition}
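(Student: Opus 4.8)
The plan is to reduce the statement about triangulated Witt groups to the already-established algebraic picture of Witt groups of the noetherian-style heart $\cat{B}^0$, using the self-dual $t$-structure to move forms and lagrangians between $\cat{B}$ and $\cat{B}^0$. First I would treat the vanishing cases $i$ odd. Here the shifted duality $c\mapsto (Dc)[-i]$ is only a $\delta$-functor, but the key point is that the $t$-structure is self-dual, so for a non-degenerate $\epsilon$-symmetric form $\beta\colon b\to (Db)[-i]$ one can, using the truncation functors, show that $b$ is forced into a narrow range of perverse degrees and that $\beta$ is always metabolic: the truncation $\tau^{\leq 0}b\hookrightarrow b$ (suitably chosen relative to the self-duality axis) provides a lagrangian, with the connecting map symmetric by construction. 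This is exactly the kind of argument appearing in \cite[Theorem 4.3]{twg2} and \cite[Theorem 7.4]{youssin}; I would cite those and give the truncation-lagrangian construction explicitly, checking the symmetry condition $(D\gamma)[1]=\chi\gamma$ from the octahedral axiom.

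Next, for $i=0$ (and identically for $i=2$ with the sign twist in $\chi$, which is what produces $W_-(\cat{B}^0)$ rather than $W(\cat{B}^0)$), I would construct the isomorphism $W(\cat{B}^0)\to W_0(\cat{B})$ induced by the inclusion of the heart. A non-degenerate symmetric form on an object of $\cat{B}^0$ is a fortiori a non-degenerate symmetric form in $\cat{B}$, and this is compatible with direct sums, so we get a monoid map $\mwc{}{\cat{B}^0}\to W_0(\cat{B})$; one checks it kills metabolic forms in $\cat{B}^0$ (a lagrangian subobject in the abelian sense gives a short exact sequence, hence an exact triangle, hence a triangulated lagrangian, with symmetric connecting map) so it descends to $W(\cat{B}^0)\to W_0(\cat{B})$. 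For surjectivity: given any non-degenerate symmetric $\beta\colon b\to Db$ in $\cat{B}$, use $H^0$ to produce a symmetric form $H^0\beta$ on $H^0b\in\cat{B}^0$; I would show $[\beta]=[\overline{H^0\beta}]$ in $W_0(\cat{B})$ by an induction on the perverse amplitude of $b$, using the exact triangle $\tau^{\leq -1}b\to b\to H^0b$ together with a triangulated version of the splitting/sublagrangian argument to peel off the off-heart cohomology as metabolic summands — this is where the enriched octahedral axiom is used, to glue the relevant triangles.

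For injectivity I would argue that if $\beta$ is a non-degenerate symmetric form on $c\in\cat{B}^0$ that becomes metabolic in $\cat{B}$, with triangulated lagrangian $\alpha\colon a\to c$, then applying $H^0$ and using that $\cat{B}^0$ is noetherian (Corollary \ref{simples generate witt gp}, the equivalence of noetherian/artinian/length, Jordan--Hölder) lets one replace $a$ by its cohomology and show the image of $H^0\alpha$ is an abelian lagrangian for $\beta$, so $[\beta]=0$ in $W(\cat{B}^0)$. Concretely I would first reduce to $a\in\cat{B}^{\leq 0}$, then show the long exact cohomology sequence of the lagrangian triangle forces $H^0a\hookrightarrow c$ to have orthogonal complement equal to its own image, \ie to be an honest lagrangian subobject. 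The main obstacle I anticipate is precisely this last step: controlling the interaction between triangulated lagrangians (which live on objects that need not be in the heart) and the abelian notion of lagrangian subobject, since the connecting map $\gamma\colon Da\to a[1]$ can have components in several perverse degrees. Handling this cleanly — showing that, up to metabolic forms in $\cat{B}$, one may always take the lagrangian to lie in the heart — is the technical heart of the proof, and it is where the self-duality of the $t$-structure (balancing $\cat{B}^{\leq 0}$ against $D\cat{B}^{\geq 0}$) does the essential work.
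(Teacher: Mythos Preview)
Your approach to the odd cases and the forward map $W(\cat{B}^0)\to W_0(\cat{B})$ matches the paper closely, and your surjectivity argument via truncation and Balmer's sublagrangian machinery is essentially the paper's argument (the paper does it in one step: $\tau^{<0}b\to b$ is isotropic, and one checks $\tau^{<0}b\to\tau^{\leq 0}b$ is a ``very good morphism'' via the enriched octahedral axiom, then invokes \cite[Theorem 4.20]{twg1} to get $[\beta]=[H^0\beta]$).

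However, your injectivity argument has a genuine gap. You invoke the noetherian hypothesis on $\cat{B}^0$, but Proposition~\ref{abelian-triangulated relation} makes no such assumption --- so as written your argument proves a strictly weaker statement. Moreover, even granting noetherianity, the claim that the image of $H^0a\to c$ is an abelian \emph{lagrangian} is not obviously true: the long exact sequence only tells you this image equals $\ker(c\to H^0Da)$, which is the orthogonal complement of the image, so you get an isotropic subobject whose reduction you would still need to control.

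The paper sidesteps this entirely by constructing an explicit inverse map $W_0(\cat{B})\to W(\cat{B}^0)$, $[\beta]\mapsto[H^0\beta]$, and checking it is well-defined. The point is that $H^0$ commutes with duality, and if $\alpha\colon a\to b$ is a triangulated lagrangian for $\beta$ then the long exact sequence $\cdots\to H^0a\to H^0b\to H^0Da\to\cdots$ is exact at the middle term, so Corollary~\ref{splitting cor} applies directly and gives $[H^0\beta]=[\overline{\alpha'}]+[\overline{\gamma'}]$ with both restricted forms degenerate to zero (since consecutive maps in a triangle compose to zero). Hence $[H^0\beta]=0$ in $W(\cat{B}^0)$ with no noetherian assumption and no need to produce an abelian lagrangian. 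Once both maps exist, the two composites are visibly the identity. You already have Corollary~\ref{splitting cor} available; use it here rather than attempting to descend lagrangians.
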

\begin{proof}
We treat the case $i=0$ first. The inclusion $\cat{B}^0 \hookrightarrow \cat{B}$ commutes with duality and preserves metabolic forms. Therefore it induces a map $W(\cat{B}^0)\to W_0(\cat{B})$. The functor $H^0\colon  \cat{B}\to \cat{B}^0$ also commutes with duality. Whenever $\alpha\colon  a \to b$ is a lagrangian for $\beta$ there is an exact sequence
\[
\cdots \to H^0a \to H^0b \to H^0Da \to \cdots.
\]
Hence we can apply Corollary \ref{splitting cor} to deduce that $[H^0\beta]=0\in W(\cat{B}^0)$. Therefore there is an induced map $W_0(\cat{B}) \to W(\cat{B}^0)\colon  [\beta]\mapsto[H^0\beta]$. It is clear that the composite 
\[
W(\cat{B}^0)\to W_0(\cat{B})\to W(\cat{B}^0)
\]
is the identity (in fact on representatives). We also claim that $[\beta]=[H^0\beta]$ in $W(\cat{B})$, from which it follows immediately that $W(\cat{B}^0)\cong W_0(\cat{B})$. To establish the claim we use the sub-lagrangian construction of \cite[\S4]{twg1}, which is an analogue of isotropic reduction for the triangulated setting. 

Given a non-degenerate symmetric form $\beta\colon  b \to Db$ in $\cat{B}$ we observe that $\imath\colon \tau^{<0}b \to b$ is isotropic (or in the terminology of \cite{twg1} sub-lagrangian) because $D\imath\beta\imath=0$. Furthermore, the natural morphism $\jmath\colon  \tau^{<0}b \to \tau^{\leq 0}b$  is a `good morphism' in the sense of \cite[Definition 4.3]{twg1} because there exist morphisms $q$ and $r$ such that the diagram (in which we omit some natural morphisms)
\[
\begin{tikzcd}
\tau^{<0}b \ar{d}{\jmath} \ar{r}{\imath} & b \ar{d}{\beta} \ar{r}{q}& \tau^{\geq0}Db \ar{d}{D\jmath} \ar{r}{r[1]} & \tau^{<0}b[1] \ar{d}{\jmath[1]}\\
\tau^{\leq 0}b \ar{r}{Dq}& Db \ar{r}{D\imath}& \tau^{>0}Db \ar{r}{Dr}& \tau^{\leq 0}b[1]
\end{tikzcd}
\]
is commutative. Indeed by applying the enhanced octrahedral axiom to the octahedron below (which  for ease of reading we draw as upper and lower halves with dotted arrows indicating the boundary morphisms of the exact triangles and labels on morphisms omitted)
\[
\begin{tikzcd}
H^0Db \ar{rr} \ar{dr} && \tau^{<0} b[1]  \ar{dd} &
H^0Db \ar{rr} && \tau^{<0} b[1] \ar{dd}\ar{dl} \\
& \tau^{\geq 0}Db \ar{ur}\ar{dl} &
&&\tau^{\leq0}b[1] \ar[dashed]{ul}\ar{dr}& \\
 \tau^{>0}Db \ar[dashed]{uu}&& Db[1] \ar[dashed]{ll} \ar[dashed]{ul}  &
  \tau^{>0}Db  \ar{ur} \ar[dashed]{uu} && Db[1] \ar[dashed]{ll} 
  \end{tikzcd}
\]
we obtain the required triangle which shows that $\jmath$ is a `very good morphism' in the sense of \cite[Definition 4.11]{twg1}. Applying  \cite[Theorem 4.20]{twg1} we deduce that $[\beta]=[H^0\beta]$ as required.

The other cases follow more easily: the group $W_1(\cat{B})$ vanishes because each representative $b \to (Db)[-1]$ has a lagrangian, namely $\tau^{\leq 0}b \to b$; the case $i=2$ is similar to $i=0$, but with symmetric forms replaced by antisymmetric ones, and the case $i=3$ is similar to  $i=1$. We omit the details.
\end{proof}
 \begin{corollary}
 \label{witt-cobordism relation}
 Suppose $\cat{B}$ is a triangulated category with exact duality $D$ and $\cat{B}^0$ is the heart of a self-dual $t$-structure on $\cat{B}$. Then there are canonical isomorphisms
 \[
 W_0(\cat{B}) \cong \Omega_+(\cat{B}) \quad \textrm{and} \quad W_2(\cat{B}) \cong \Omega_-(\cat{B})
 \]
 between the non-vanishing Balmer--Witt groups and the Youssin cobordism groups $\Omega_\pm(\cat{B})$ of symmetric and antisymmetric self-dual complexes introduced in \cite{youssin}.
 \end{corollary}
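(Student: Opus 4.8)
The plan is to deduce the corollary directly from Proposition~\ref{abelian-triangulated relation} together with the comparison results for Youssin's cobordism groups $\Omega_\pm(\cat{B})$ already available in the literature. First I would recall Youssin's definition from \cite{youssin}: $\Omega_+(\cat{B})$ (resp. $\Omega_-(\cat{B})$) is the cobordism group of symmetric (resp. antisymmetric) self-dual objects in $\cat{B}$, where the relation is generated by the existence of a suitable null-cobordism, which in the triangulated setting is essentially the data of a lagrangian together with a compatible exact triangle. The key point is that, for a triangulated category with duality satisfying our standing hypotheses (essentially small, enriched octahedral axiom, $2$ invertible), Youssin's cobordism relation and Balmer's Witt relation coincide: in both cases one is quotienting the monoid of non-degenerate symmetric (resp. antisymmetric) forms by the submonoid of metabolic/neutral forms, where a metabolic form is one admitting a lagrangian $\alpha\colon a\to b$ fitting into an exact triangle $a\to b\to Da\to a[1]$ with symmetric connecting map. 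This identification is precisely \cite[Theorem 7.4]{youssin}, already cited in \S\ref{triangulated witt groups} just before Proposition~\ref{abelian-triangulated relation}.

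Concretely, the proof would run as follows. By Proposition~\ref{abelian-triangulated relation} we have $W_0(\cat{B})\cong W(\cat{B}^0)$ and $W_2(\cat{B})\cong W_-(\cat{B}^0)$, where $\cat{B}^0$ is the heart of the given self-dual $t$-structure. On the other hand, one checks that Youssin's cobordism group $\Omega_+(\cat{B})$ is, by construction, canonically isomorphic to Balmer's $W_0(\cat{B})$ once one matches up the two notions of null-cobordism; the hypothesis that $\cat{B}$ satisfies the enriched octahedral axiom is exactly what is needed to make Balmer's theory and Youssin's theory agree, and this comparison is the content of \cite[Theorem 7.4]{youssin} (with the caveat, noted in the text, that Youssin uses a slightly different but equivalent formulation of the triangulated Witt group). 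Composing these isomorphisms gives $W_0(\cat{B})\cong \Omega_+(\cat{B})$, and the antisymmetric case $W_2(\cat{B})\cong \Omega_-(\cat{B})$ follows by the same argument applied to the shifted duality $c\mapsto (Dc)[-2]$, under which symmetric forms become antisymmetric forms, exactly as in the proof of Proposition~\ref{abelian-triangulated relation}.

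I would then remark that the isomorphisms are canonical in the sense that they are induced by the identity on representative forms: a non-degenerate symmetric self-dual object of $\cat{B}$ represents a class in $W_0(\cat{B})$ and simultaneously a class in $\Omega_+(\cat{B})$, and the content of the statement is that these two equivalence relations on the same underlying monoid coincide. Functoriality under triangulated functors commuting with duality is then automatic, since both sides are built from the same forms and both metabolic and cobordant-to-zero are preserved by such functors.

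The main obstacle — really the only nontrivial point — is verifying that Youssin's cobordism relation on self-dual objects genuinely coincides with Balmer's metabolic relation, \ie that a self-dual object is Youssin-cobordant to zero if and only if it is Balmer-metabolic. One direction (metabolic $\Rightarrow$ cobordant to zero) is essentially formal from the definitions. The converse requires unwinding Youssin's null-cobordism data and producing from it a lagrangian in Balmer's sense with the required symmetric connecting morphism; this is where the enriched octahedral axiom is used, in the same way it is used in the sub-lagrangian reduction of \cite{twg1} invoked in the proof of Proposition~\ref{abelian-triangulated relation}. Since this comparison is precisely \cite[Theorem 7.4]{youssin}, I would cite it rather than reproduce the argument, and the corollary then follows immediately.
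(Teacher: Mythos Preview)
Your proposal contains a genuine gap: you misidentify the content of \cite[Theorem 7.4]{youssin}. That theorem does \emph{not} directly compare Youssin's cobordism relation with Balmer's metabolic relation --- indeed, Youssin's paper predates Balmer's definition. What \cite[Theorem 7.4]{youssin} actually proves is the analogue of Proposition~\ref{abelian-triangulated relation} for Youssin's groups, namely $\Omega_\pm(\cat{B}) \cong W_\pm(\cat{B}^0)$ via the heart. This is exactly how the paper cites it in the paragraph preceding Proposition~\ref{abelian-triangulated relation}: ``the analogous result is proved for a slightly different definition of triangulated Witt group.'' Your claim that ``Youssin's cobordism relation and Balmer's Witt relation coincide'' as a direct statement about the two equivalence relations on the same monoid is therefore not supported by the reference you give, and you have no argument for the hard direction (Youssin-cobordant-to-zero $\Rightarrow$ Balmer-metabolic).

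The paper's proof is short but structurally different from what you propose. It uses three ingredients: (i) a canonical \emph{surjection} $W_0(\cat{B}) \to \Omega_+(\cat{B})$, which exists because Balmer's metabolic relation is visibly at least as strong as Youssin's cobordism relation (this easy direction is cited from \cite[p.~31]{MR2646988}); (ii) the isomorphism $W_0(\cat{B}) \cong W(\cat{B}^0)$ from Proposition~\ref{abelian-triangulated relation}; and (iii) the isomorphism $\Omega_+(\cat{B}) \cong W(\cat{B}^0)$ from \cite[Theorem 7.4]{youssin}. One then checks that the surjection in (i) is compatible with the isomorphisms in (ii) and (iii), so it must itself be an isomorphism. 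This triangle argument sidesteps entirely the need to compare the two triangulated equivalence relations directly. Your intuition in the final paragraph that the isomorphism is ``induced by the identity on representative forms'' is correct and is essentially the compatibility statement, but the reason it is an isomorphism comes from passing through the heart on both sides, not from a direct comparison of metabolic and null-cobordant in $\cat{B}$.
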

 \begin{proof}
 There are canonical surjective homomorphisms 
 \[
 W_0(\cat{B}) \to \Omega_+(\cat{B}) \qquad \text{and} \qquad W_2(\cat{B}) \to \Omega_-(\cat{B}),
 \]
  see  \cite[p31]{MR2646988}. These are compatible with the isomorphisms to $W_\pm(\cat{B}^0)$ provided by Proposition \ref{abelian-triangulated relation} and \cite[Theorem 7.4]{youssin}.
 \end{proof}

\begin{examples}
\label{triangulated cats with duality}
The results above apply in various interesting examples:
\begin{enumerate}
\item The bounded derived category of an abelian category with duality, with its evident induced duality and standard $t$-structure.
\item The constructible derived category of sheaves of vector spaces on a finite-dimensional topologically stratified space, with only even-dimensional strata, equipped with Verdier duality and the self-dual perverse $t$-structure, see for example \cite[\S 2]{bbd} and \cite[\S4.2]{MR2031639}. Here, by topologically stratified space we mean a locally cone-like stratified space in the sense of Siebenmann, see for example \cite[\S 4.2]{MR2031639}.
\item The constructible derived category of sheaves of torsion modules over a Dedekind ring $R$ on a finite-dimensional topologically stratified space, with only even-dimensional strata, as studied in \cite{MR1127478}. The torsion condition is preserved by push-forward along open inclusions because the stalks of the push-forward can be expressed in terms of the compact link \cite[Remark 4.4.2]{MR2031639}, and by the K\"unneth formula they vanish after tensoring with $Q(R)$. It follows from \cite[\S3.3]{bbd} that the perverse $t$-structure is self-dual for shifted Verdier duality.
\item 
\label{flops}
Let $f \colon X \to Y$ be a proper morphism of complex algebraic varieties, of fibre dimension at most $1$ and with $Rf_* \mathcal{O}_X=\mathcal{O}_Y$. Then the standard $t$-structure restricts to a $t$-structure on the null category $C_f$, i.e.\ the full category of $D^b \textrm{Coh}(X)$ on objects $\sh{E}$ with $Rf_*\sh{E}=0$, see \cite[Lemma 3.1]{MR1893007}. If in addition $f$ is an isomorphism outside a subvariety of dimension $0$ then the heart $C_f \cap \textrm{Coh}(X)$ is stable under shifted Grothendieck duality by \cite[Proposition 9.7 and Theorem 9.8]{bodzenta-flops}. 
\end{enumerate}

\end{examples}

\subsection{Gluing and splitting}
\label{gluing and splitting}

Suppose that $\cat{A} \stackrel{\imath_*}{\longrightarrow} \cat{B} \stackrel{\jmath^*}{\longrightarrow} \cat{C}$ is an exact triple of triangulated categories, \ie $\imath_*$ is the inclusion of a full, thick triangulated subcategory $\cat{A}$ of a triangulated category $\cat{B}$, and $\cat{C}$ is the quotient category obtained by localising at all morphisms in $\cat{A}$. If $\cat{B}$ has a triangulated duality which preserves the subcategory $\cat{A}$ then both $\cat{A}$ and $\cat{C}$ inherit triangulated dualities such that the inclusion $\imath_*$ and quotient $\jmath^*$ commute with duality. Theorem 6.2 of \cite{twg1} states that there is then a long exact sequence
\[
\cdots \to W_i(\cat{A}) \to W_i(\cat{B}) \to W_i(\cat{C}) \to W_{i-1}(\cat{A}) \to \cdots
\]
of Balmer--Witt groups in which the first two maps are induced from $\imath_*$ and $\jmath^*$ respectively (the hypothesis in \cite{twg1} that $\cat{C}$ is weakly cancellative is unnecessary, see \cite[Theorem 2.1]{MR1886007}).

For the remainder of this section we suppose further that 
\begin{enumerate}
\item $\imath_*$ has respective left and right adjoints $\imath^*$ and $\imath^!$;
\item $\jmath^*$ has respective left and right adjoints $\jmath_!$ and $\jmath_*$;
\item\label{gluing dts} there exist natural transformations $\imath_*\imath^* \to \jmath_!\jmath^*[1]$ and $\jmath_*\jmath^* \to \imath_*\imath^![1]$ such that there are natural exact triangles
\[
\imath_*\imath^! \to \id \to \jmath_*\jmath^* \to \imath_*\imath^! [1] 
\quad \textrm{and} \quad 
\jmath_!\jmath^* \to \id \to \imath_*\imath^* \to \jmath_!\jmath^* [1]
\]
{ whose other morphisms are units or counits of the relevant adjunctions; }
\item { the units of the adjunctions $\imath_* \dashv \imath^!$ and $\jmath_! \dashv \jmath^*$ are isomorphisms, as are the counits of $\imath^* \dashv \imath_*$ and $\jmath^* \dashv \jmath_*$.}
\end{enumerate}
Some of these conditions are redundant:  the existence of any one of the adjoints guarantees the existence of the other three, the triangles in the third condition are dual to one another and the final condition follows from the fact that $\imath_*, \jmath_*$ and $\jmath_!$ are fully faithful.

Under these conditions \cite[Th\'eor\`eme 1.4.10]{bbd} states that one can glue given $t$-structures $\cat{A}^{\leq0}\subset\cat{A}$ and $\cat{C}^{\leq0}\subset\cat{C}$ to obtain a $t$-structure
\[
\cat{B}^{\leq 0} = \langle b\in \cat{B} \ | \ \imath^*b \in \cat{A}^{\leq 0}, \jmath^*b \in \cat{C}^{\leq 0} \rangle
\]
on $\cat{B}$. This glued $t$-structure is self-dual whenever the given ones on $\cat{A}$ and $\cat{C}$ are so. With respect to these $t$-structures
\begin{enumerate}
\item $\imath_*$ and $\jmath^*$ are $t$-exact;
\item $\imath^!$ and $\jmath_*$ are left $t$-exact;
\item $\imath^*$ and $\jmath_!$ are right $t$-exact.
\end{enumerate}
The adjunctions give rise to natural morphisms $\jmath_!\to\jmath_*$ and $\imath^! \to \imath^*$. The \defn{intermediate extension}  is defined to be the functor $\jmath_{!*}=\im (\pf{\jmath_!} \to \pf{\jmath_*})$ and similarly the \defn{intermediate restriction} is defined to be $\imath^{!*}=\im (\pf{\imath^!} \to \pf{\imath^*})$. By construction both $\jmath_{!*}$ and $\imath^{!*}$ commute with duality.  

If $\jmath$ and $k$ are composable quotient functors  then $(\jmath k)_{!*} = \jmath_{!*}k_{!*}$, see \cite[2.1.7.1]{bbd}; the analogue for composable inclusion functors is false in general, see Example \ref{cs2 failure}. Intermediate extensions are neither left nor right exact, but do preserve injections, surjections and images; intermediate restrictions need not have any exactness properties. Finally, intermediate extensions are fully faithful.

\begin{examples}
\label{triangulated gluing examples}
There are various examples of this gluing situation:
\begin{enumerate}
\item The bounded derived category of a highest weight category $\cat{B}^0$ with duality in the sense of \cite{MR982273}. The simple objects of $\cat{B}^0$ are the elements of a poset; each is fixed by duality. The functor $\imath_*$ is given by the inclusion of $D^b(\cat{A}^0)$  where $\cat{A}^0$ is the Serre subcategory generated by a downward-closed subset of simple objects. The functor $\jmath^*$ is the induced quotient $D^b(\cat{B}^0) \to D^b(\cat{B}^0/\cat{A}^0)$. 
\item The constructible derived category of sheaves of vector spaces on a finite-dimensional topologically stratified space, with only even-dimensional strata, equipped with Verdier duality and the self-dual perverse $t$-structure. In this case $\imath$ is the inclusion of a closed union of strata and $\jmath$ the complementary inclusion of an open union of strata, and $\imath_*$ and $\jmath^*$ the respective induced functors.

The example of the constructible derived category of sheaves of torsion modules over a Dedekind ring $R$ works similarly. 
\item Let $g\colon X \to Z$ and $h \colon Z \to Y$ be a proper birational morphisms of smooth complex algebraic surfaces, and let $f=g\circ h$. Then the exact triple of null categories, as defined in Examples \ref{triangulated cats with duality} (\ref{flops}) above,
\[
C_g \stackrel{\imath_*}{\longrightarrow} C_f \stackrel{Rg_*}{\longrightarrow} C_h
\]
where the first functor is the inclusion of the full subcategory $C_g$ into $C_f$, extends to gluing data by \cite[Proposition 3.5]{bodzenta-canonical}. This data is compatible as above with shifted Grothendieck duality since we have restricted to surfaces.
\end{enumerate}
\end{examples}

\begin{proposition}
\label{direct sum decomp}
{ Consider as before a gluing $\cat{A} \stackrel{\imath_*}{\longrightarrow} \cat{B} \stackrel{\jmath^*}{\longrightarrow} \cat{C}$ of self-dual 
$t$-structures, with induced dualities on their hearts $\cat{A}^0, \cat{B}^0$ and $\cat{C}^0$.}
Suppose $\cat{B}^0$ is noetherian, equivalently that both $\cat{A}^0$ and $\cat{C}^0$ are noetherian. Then $\wgc{}{\cat{B}^0} \cong  \wgc{}{\cat{A}^0} \oplus \wgc{}{\cat{C}^0}$. The analogue for anti-symmetric forms also holds.
\end{proposition}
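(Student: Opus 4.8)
The plan is to construct the isomorphism $\wgc{}{\cat{B}^0} \cong \wgc{}{\cat{A}^0} \oplus \wgc{}{\cat{C}^0}$ via the splitting relation, using the gluing functors. Since $\imath_* \colon \cat{A}^0 \to \cat{B}^0$ is exact and commutes with duality it induces $\wgc{}{\imath_*}$, while the exact functor $\pf{\jmath_!} = \jmath_!$ restricted to $\cat{C}^0$... wait, $\jmath_!$ is only right $t$-exact, so instead I would use the intermediate extension $\jmath_{!*}$, which commutes with duality but is not exact. To get a well-defined map on Witt groups from $\jmath_{!*}$ I cannot simply invoke the ``exact functor'' principle; instead I would argue directly that $\jmath_{!*}$ sends metabolic (indeed lagrangian) forms to forms that are Witt-trivial in $\wgc{}{\cat{B}^0}$, or — more robustly — use Corollary~\ref{splitting cor}: given a lagrangian $a \hookrightarrow b$ in $\cat{C}^0$ with quotient $Da$, applying $\jmath_{!*}$ yields a sequence $\jmath_{!*}a \to \jmath_{!*}b \to \jmath_{!*}Da$ which is exact in the middle (intermediate extension preserves injections, surjections and images, hence exactness at the middle term), so $[\jmath_{!*}\beta] = 0$. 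Thus I get $\phi \colon \wgc{}{\cat{A}^0} \oplus \wgc{}{\cat{C}^0} \to \wgc{}{\cat{B}^0}$, $(\alpha,\gamma) \mapsto [\imath_*\alpha] + [\jmath_{!*}\gamma]$.

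For the inverse, I would send a non-degenerate symmetric form $\beta \colon \mathcal{B} \to D\mathcal{B}$ on $\cat{B}^0$ to the pair $\bigl([\imath^{!*}\beta], [\jmath^*\beta]\bigr) \in \wgc{}{\cat{A}^0} \oplus \wgc{}{\cat{C}^0}$, where $\jmath^*$ is $t$-exact and hence induces a genuine Witt-group map, and $\imath^{!*}\beta$ is the induced form on the intermediate restriction (which commutes with duality). The key point — and this is really the content of the splitting relation in the geometric guise \eqref{split-perv} — is that $[\beta] = [\imath_*\imath^{!*}\beta] + [\jmath_{!*}\jmath^*\beta]$ in $\wgc{}{\cat{B}^0}$. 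I would prove this by applying Proposition~\ref{splitting relation} (or Corollary~\ref{splitting cor}) to a suitable short exact sequence in $\cat{B}^0$ relating $\mathcal{B}$ to objects supported on $\cat{A}^0$ and to $\jmath_{!*}\jmath^*\mathcal{B}$: concretely, the adjunction morphism $\jmath_!\jmath^*\mathcal{B} \to \mathcal{B}$ and its perverse truncation give, after passing to $\pf{H}^0$, a sequence whose outer terms split off as the intermediate extension piece, while the kernel and cokernel are objects of $\cat{A}^0$; tracking the induced forms identifies them (up to isotropic reduction) with $\imath_*\imath^{!*}\beta$ and $\jmath_{!*}\jmath^*\beta$. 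This shows $\phi \circ \psi = \id$ on the level of the formula $[\beta] \mapsto ([\imath^{!*}\beta],[\jmath^*\beta]) \mapsto [\imath_*\imath^{!*}\beta]+[\jmath_{!*}\jmath^*\beta] = [\beta]$.

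That $\psi \circ \phi = \id$ reduces to two computations: $\jmath^*\imath_*\alpha = 0$ and $\imath^{!*}\imath_*\alpha = \alpha$ (because $\imath^* \imath_* \cong \id \cong \imath^!\imath_*$ on $\cat{A}^0$, so the intermediate restriction of $\imath_*\alpha$ is just $\alpha$ with its form), and dually $\jmath^*\jmath_{!*}\gamma = \gamma$ (intermediate extension is a section of $\jmath^*$ up to the fully faithfulness noted in the excerpt) together with $\imath^{!*}\jmath_{!*}\gamma = 0$. The last vanishing is the one that needs a genuine argument: by definition $\jmath_{!*}\gamma = \im(\pf{\jmath_!}\jmath^*\mathcal{B} \to \pf{\jmath_*}\jmath^*\mathcal{B})$ has no nonzero sub- or quotient-object in $\cat{A}^0$, so $\pf{\imath^*}\jmath_{!*}\gamma$ sits in strictly negative perverse degrees and $\pf{\imath^!}\jmath_{!*}\gamma$ in strictly positive ones, whence $\imath^{!*}\jmath_{!*}\gamma = \pf{H}^0$ of the image vanishes; I would cite \cite[2.1.7--2.1.11]{bbd} for these support/cosupport statements. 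Finally the anti-symmetric case is identical after replacing $\chi$ by $-\chi$ throughout, as already noted in \S\ref{witt groups of abelian categories}.

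\medskip

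The main obstacle I anticipate is \emph{not} the bookkeeping of adjunctions but establishing the splitting identity $[\beta] = [\imath_*\imath^{!*}\beta] + [\jmath_{!*}\jmath^*\beta]$ with the correct induced forms: one must choose the right short exact sequence in the noetherian abelian category $\cat{B}^0$ to feed into Corollary~\ref{splitting cor}, verify that the two resulting reduced forms $\overline{\alpha}$ and $\overline{\gamma}$ are isometric to the forms carried by $\imath_*\imath^{!*}\beta$ and $\jmath_{!*}\jmath^*\beta$ respectively, and in doing so carefully use that intermediate extension/restriction commute with duality and preserve images. Everything else — that $\imath_*$ and $\jmath^*$ induce Witt maps, that $\jmath_{!*}$ kills metabolics via exactness-at-the-middle — is routine given the results already assembled in the excerpt, in particular Corollary~\ref{simples generate witt gp} guaranteeing anisotropic representatives and Corollary~\ref{splitting cor} handling possibly-degenerate induced forms.
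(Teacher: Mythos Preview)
Your approach has a genuine gap: neither $\jmath_{!*}$ nor $\imath^{!*}$ induces a well-defined map of Witt groups, so your proposed maps $\phi$ and $\psi$ do not exist as stated. Your justification that ``intermediate extension preserves injections, surjections and images, hence exactness at the middle term'' is incorrect: preserving images means $\jmath_{!*}(\im f)=\im(\jmath_{!*}f)$, but this says nothing about kernels, and $\jmath_{!*}$ is not left exact. The paper gives an explicit counterexample (Example~\ref{ie not exact 1}) in which a metabolic form $\beta$ on $\cat{C}^0$ has $[\jmath_{!*}\beta]\neq 0$; a second example (Example~\ref{ie not exact 2}) exhibits a short exact sequence whose intermediate extension fails exactness in the middle. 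The splitting identity $[\beta]=[\imath_*\imath^{!*}\beta]+[\jmath_{!*}\jmath^*\beta]$ that you aim for is true and is Theorem~\ref{sum thm}, but the paper immediately warns that the two terms on the right depend on the representative $\beta$, not only on $[\beta]$; so you cannot use this identity to define $\psi$.

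The paper's proof avoids all of this by a different and much shorter route. It invokes \cite[Proposition~1.4.26]{bbd}: every simple object of $\cat{B}^0$ is either $\imath_*a$ for simple $a\in\cat{A}^0$ or $\jmath_{!*}c$ for simple $c\in\cat{C}^0$, and duality preserves these two classes. Then Corollary~\ref{direct-sum-cor} (the decomposition of the Witt group by isomorphism classes of self-dual simples) gives $\wgc{}{\cat{B}^0}$ as a direct sum of two pieces, one generated by the $\imath_*a$ and one by the $\jmath_{!*}c$. Finally $\imath_*$ identifies the first piece with $\wgc{}{\cat{A}^0}$, and full faithfulness of $\jmath_{!*}$ shows that $\jmath^*$ identifies the second with $\wgc{}{\cat{C}^0}$. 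The maps $\jmath_{!*}$ and $\imath^{!*}$ are only later (\ref{inclusions and projections}) made into Witt-group homomorphisms, and then by the device of choosing anisotropic representatives (Corollary~\ref{anisotropic splitting}), not by the functorial argument you propose.
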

\begin{proof}
By \cite[Proposition 1.4.26]{bbd} each simple object of the heart $\cat{B}^0$ is either of the form $\imath_*a$  for simple $a\in \cat{A}^0$, or $\jmath_{!*}c$ for simple $c\in \cat{C}^0$. Furthermore duality preserves these two classes. Hence by the same argument as in the proof of Corollary \ref{direct-sum-cor} we have
\begin{align*}
\wgc{}{\cat{B}^0} &\cong \wgc{}{\langle \imath_*a \ | \ \textrm{simple}\ a\in\cat{A}^0 \rangle }
 \oplus \wgc{}{\langle \jmath_{!*}c \ | \ \textrm{simple}\ c\in\cat{C}^0 \rangle }\:.
\end{align*}
It is immediate that $\imath_*$ induces an isomorphism $\wgc{}{\cat{A}^0} \cong \wgc{}{\langle \imath_*a \ | \ \textrm{simple}\ a\in\cat{A}^0 \rangle }$. It follows from the fact that $\jmath_{!*}$ is fully faithful that $\jmath^*$ induces an isomorphism
\[
\wgc{}{\langle \jmath_{!*}c \ | \ \textrm{simple}\ c\in\cat{C}^0 \rangle } \cong \wgc{}{\cat{C}^0}. \qedhere
\]
\end{proof}
Together with Proposition \ref{abelian-triangulated relation} this provides an independent proof of the existence of the long exact sequence of Balmer--Witt groups in the case when $\cat{B}^0$ is noetherian, and furthermore shows that it splits in this case.

The proof shows that the inclusion $\wgc{}{\cat{A}^0} \mono \wgc{}{\cat{B}^0}$ is induced by $\imath_*$ and the projection $\wgc{}{\cat{B}^0} \epi \wgc{}{\cat{C}^0}$ by $\jmath^*$. It is harder to obtain explicit descriptions of the other inclusion and projection.
 
\begin{theorem}
\label{sum thm}
Suppose $\beta \colon  b \to Db$ is a non-degenerate symmetric form in $\cat{B}$. Then 
\begin{equation}
\label{witt decomposition}
[\beta] = [\imath_*\imath^{!*}\beta] + [\jmath_{!*}\jmath^*\beta].
\end{equation}
in the Witt group {  $\wgc{}{\cat{B}^0}\cong \wgc{}{\cat{B}}$}. 
\end{theorem}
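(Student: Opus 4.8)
The plan is to reduce the statement to a single application of the splitting relation from Corollary~\ref{splitting cor} applied to a suitable three-term complex in $\cat{B}^0$. First I would transport the form $\beta$ into the abelian heart: by Proposition~\ref{abelian-triangulated relation} (and its proof, which shows $[\beta]=[H^0\beta]$ in $W(\cat{B})$) it suffices to work with $\pcoh{0}{\beta}$, i.e.\ we may as well assume $\beta \colon b \to Db$ is a non-degenerate symmetric form in the abelian category $\cat{B}^0$ itself. So from now on everything takes place inside $\cat{B}^0$, where $\pf{\imath^*}$, $\pf{\imath^!}$, $\pf{\jmath_!}$, $\pf{\jmath_*}$ are the relevant (not necessarily exact) functors and $\imath_*$, $\jmath^*$ are exact.

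Next I would produce the exact sequence to feed into Corollary~\ref{splitting cor}. The adjunction counit gives $\pf{\jmath_!}\jmath^* b \to b$ and the unit gives $b \to \pf{\imath_*}\pf{\imath^*} b = \imath_*\pf{\imath^*}b$; applying $\pcoh{0}{-}$ to the gluing triangle $\jmath_!\jmath^* \to \id \to \imath_*\imath^*\to \jmath_!\jmath^*[1]$ yields an exact sequence $\pf{\jmath_!}\jmath^* b \to b \to \imath_*\pf{\imath^*}b$ in $\cat{B}^0$, exact at the middle term. Set $a = \pf{\jmath_!}\jmath^*b$ with $f\colon a \to b$ the counit and $c = \imath_*\pf{\imath^*}b$ with $g \colon b \to c$ the unit. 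Corollary~\ref{splitting cor} then gives $[\beta] = [\overline{\alpha}] + [\overline{\gamma}]$ where $\alpha = \beta|_f$ and $\gamma = \beta|_{\beta^{-1}Dg}$. The remaining work is to identify $[\overline{\alpha}]$ with $[\jmath_{!*}\jmath^*\beta]$ and $[\overline{\gamma}]$ with $[\imath_*\imath^{!*}\beta]$.

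For the first identification: $f\colon \pf{\jmath_!}\jmath^*b \to b$ factors through $\pf{\jmath_*}\jmath^*b$, and by the definition of intermediate extension $\jmath_{!*}\jmath^*b = \im(\pf{\jmath_!}\jmath^*b \to \pf{\jmath_*}\jmath^*b)$, so the image of $f$ is exactly $\jmath_{!*}\jmath^*b$ (using $t$-exactness of $\jmath^*$ so $\jmath^*\circ\pcoh{0}{-} \cong \pcoh{0}{-}\circ\jmath^*$, and that $\jmath^*f$ is an isomorphism, which forces $\ker f$ and $\coker(a\to\im f)$ to be supported on $\cat{A}$). Hence $\overline{\alpha}$, the non-degenerate form induced on $\im\alpha = \im f = \jmath_{!*}\jmath^*b$, is precisely the canonical form on $\jmath_{!*}\jmath^*\beta$ coming from the fact that $\jmath_{!*}$ commutes with duality; this uses uniqueness of the induced form (symmetry plus the characterisation of $\overline{\alpha}$). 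Dually, $\beta^{-1}Dg \colon Dc \to b$ has image $(\im(\pf{\imath^!}\imath^!{}^*\!b \to \pf{\imath^*}\dots))$ — more carefully, applying $D$ to the unit $b \to \imath_*\pf{\imath^*}b$ and composing with $\beta^{-1}$ identifies $\oc{a}{\beta}$ with the relevant subobject, and $\overline{\gamma}$ becomes the canonical form on $\imath_*\imath^{!*}\beta$. This last bookkeeping — matching the subquotients produced by the abstract splitting lemma with the concrete images defining $\jmath_{!*}$ and $\imath^{!*}$, and checking the induced forms agree — is the main obstacle; it rests on $t$-exactness of $\imath_*$ and $\jmath^*$, the fully faithfulness of $\jmath_{!*}$, and Verdier-style duality compatibilities ($D\pf{\jmath_!} \cong \pf{\jmath_*}D$, $D\pf{\imath^*}\cong\pf{\imath^!}D$), all of which are available from the gluing hypotheses. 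Once the two summands are identified the theorem follows, and transporting back along $W(\cat{B}^0)\cong W_0(\cat{B})$ gives the stated form of the identity.
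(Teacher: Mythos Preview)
Your overall strategy is correct and essentially the same as the paper's: pass to the heart via Proposition~\ref{abelian-triangulated relation}, apply Corollary~\ref{splitting cor} to the middle-exact sequence coming from one of the two gluing triangles, and then identify the two resulting summands with $\jmath_{!*}\jmath^*\beta$ and $\imath_*\imath^{!*}\beta$. The only difference is that you use the triangle $\jmath_!\jmath^*\to\id\to\imath_*\imath^*$ whereas the paper uses the dual triangle $\imath_*\imath^!\to\id\to\jmath_*\jmath^*$; this is immaterial, and indeed the paper remarks that the $\jmath$-identification is done by ``a similar, slightly more involved, argument''.

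However, your identification step contains a genuine error. You write $\im\alpha=\im f=\jmath_{!*}\jmath^*b$, but neither equality holds in general. First, $\im\alpha$ is a subobject of $Da\cong\pf{\jmath_*}\jmath^*Db$, not of $b$, so it cannot literally equal $\im f\subset b$; more to the point, $\im\alpha$ is the image of the restricted form $\beta|_{\im f}\colon\im f\to D(\im f)$, which is typically degenerate, so $\im\alpha$ is a proper subquotient of $\im f$. Second, $\im f$ need not be $\jmath_{!*}\jmath^*b$: the counit $f\colon\pf{\jmath_!}\jmath^*b\to b$ can well have image strictly larger than the intermediate extension (for instance, it may equal $\pf{\jmath_!}\jmath^*b$ itself when the counit is injective). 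Your remark that ``$f$ factors through $\pf{\jmath_*}\jmath^*b$'' is also backwards: the unit goes $b\to\pf{\jmath_*}\jmath^*b$, not the other way.

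The fix is to argue as the paper does (for the dual side): do not try to identify $\im f$, but rather identify $\alpha$ itself. By definition $\alpha = Df\circ\beta\circ f\colon \pf{\jmath_!}\jmath^*b \to \pf{\jmath_*}\jmath^*Db$. Using naturality of the counit and the fact (the $\jmath$-analogue of \cite[1.4.21.1]{bbd}) that the composite $\pf{\jmath_!}\jmath^*\to\id\to\pf{\jmath_*}\jmath^*$ is the canonical map $\pf{\jmath_!}\to\pf{\jmath_*}$ applied to $\jmath^*$, one sees that $\alpha$ factors as $\pf{\jmath_!}\jmath^*\beta$ followed by this canonical map (equivalently, the canonical map followed by $\pf{\jmath_*}\jmath^*\beta$). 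Since $\jmath^*\beta$ is an isomorphism, the image of $\alpha$ is precisely the image of $\pf{\jmath_!}\jmath^*b\to\pf{\jmath_*}\jmath^*b$, namely $\jmath_{!*}\jmath^*b$, and the induced non-degenerate form $\overline{\alpha}$ is $\jmath_{!*}\jmath^*\beta$. The paper carries this out explicitly via a commutative diagram for the $\imath$-side; you should do the same for the $\jmath$-side.
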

\begin{proof}
 There is an exact triangle $\imath_*\imath^! b \to b \to \jmath_*\jmath^* b\to \imath_*\imath^! b[1]$
which gives rise to a long exact sequence
\[
\cdots \to \imath_*\pf{\imath^!}b \to H^0b \to \pf{\jmath_*}\jmath^* b \to \cdots
\]
in the heart $\cat{B}^0$. We apply Corollary \ref{splitting cor} to write $[H^0\beta]=[\beta]$ as a sum of two terms in the Witt group $\wgc{}{\cat{B}^0} \cong \wgc{}{\cat{B}}$. The requisite two terms are the induced forms on the images of $\alpha$ and $\gamma$ in the diagram 
\[
\begin{tikzcd}
{} & \imath_*\pf{\imath^!}b \ar{r}\ar{d}{\alpha} & H^0b \ar{d}{\beta} & \pf{\jmath_!}\jmath^* Db \ar{l} \ar{d}{\gamma} &\\
\imath_*\pf{\imath^*}Db \ar[equal]{r} & D\imath_*\pf{\imath^!}b & DH^0b \ar{l} \ar{r} & D\pf{\jmath_!}\jmath^*Db \ar[equal]{r} & \pf{\jmath_*}\jmath^*D^2b.
\end{tikzcd}
\]
To identify $\alpha$ consider the commutative diagram below in which $\alpha$ is the composite of the dashed arrows.
\[
\begin{tikzcd}
DH^0b \ar[dashed]{rrr} &&&\imath_*\pf{\imath^*}Db \\
& H^0b \ar{r} \ar[dashed]{ul}{\beta} & \imath_*\pf{\imath^*}b\ar{ur}[swap]{\imath_*\pf{\imath^*}\beta}&\\
&\imath_*\pf{\imath^!}b \ar[dashed]{u} \ar{dl}[swap]{\imath_*\pf{\imath^!}\beta} \ar[two heads]{r} & \imath_*\imath^{!*}b\ar{dr}{ \imath_*\imath^{!*}\beta}\ar[hook]{u}&\\
\imath_*\pf{\imath^!}Db \ar[two heads]{rrr} \ar{uuu} &&& \imath_*\imath^{!*}Db \ar[hook]{uuu}
\end{tikzcd}
\]
The top and left squares arise from the natural morphisms $\id \to \imath_*\pf{\imath^*}$ and $\imath_*\pf{\imath^!}\to \id$ respectively, and the bottom and right squares from the definition of $\imath^{!*}$. The central square commutes because the composite $\imath_*\pf{\imath^!}\to \id \to \imath_*\pf{\imath^*}$ is the natural morphism $\imath_*(\pf{\imath^!}\to \pf{\imath^*})$, see \cite[1.4.21.1]{bbd}. It follows that $\overline{\alpha}\cong\imath_*\imath^{!*}\beta$. A similar, slightly more involved, argument shows that $\overline{\gamma}\cong\jmath_{!*}\jmath^*\beta$. 
\end{proof}

{
\begin{example}
\label{abstract-nonsingular}
Assume the  non-degenerate symmetric form $\beta \colon  b \to Db$ in $\cat{B}^0$ is a direct sum $\beta= \imath_*\alpha \oplus \jmath_{!*}\gamma$,
with non-degenerate symmetric forms $\alpha \colon  a \to Da$ in $\cat{A}^0$ and $\gamma \colon  c \to Dc$ in $\cat{C}^0$.
Then $\imath^{!*}\beta\cong \alpha$ and $\jmath^*\beta \cong \gamma$ so that (\ref{witt decomposition}) is the image of this decomposition of $\beta$ in the Witt group.
\end{example}}

It is natural to assume that, when $\cat{B}^0$ is noetherian, { the sum in (\ref{witt decomposition})} corresponds to the direct sum  decomposition of Proposition \ref{direct sum decomp}. This is false in general. The individual terms depend upon the choice of representative $\beta$ not just on the class $[\beta]$ --- see Example \ref{ie not exact 1}. When $\jmath_{!*}$ is exact then it induces a map of Witt groups splitting $\jmath^*$, and moreover $\jmath_{!*}\gamma$ is Witt equivalent to a sum of forms on intermediate extensions of simple objects in $\cat{C}^0$. Since $\imath_*$ is a monomorphism it follows that the class $[\imath^{!*}\beta]$ is also well-defined, and so $\imath^{!*}$ induces a map of Witt groups too, splitting $\imath_*$. To summarise:
\begin{corollary}
\label{exact extension cor}
Suppose $\cat{B}^0$ is noetherian and the intermediate extension $\jmath_{!*}$ is exact. Then the direct sum decomposition of Proposition \ref{direct sum decomp} is given by the maps $[\beta] \mapsto \left( [\imath^{!*}\beta],\jmath^*[\beta]\right)$ and $([\alpha],[\gamma]) \mapsto \imath_*[\alpha] + [\jmath_{!*}\gamma]$.
\end{corollary}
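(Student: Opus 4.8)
The plan is to promote the splitting relation of Theorem~\ref{sum thm} to a genuine direct sum decomposition; the role of the exactness hypothesis on $\jmath_{!*}$ is to make both terms on the right of~(\ref{witt decomposition}) depend only on the Witt class $[\beta]$. First I would note that $\jmath^*$ (being $t$-exact) and $\jmath_{!*}$ (being exact by hypothesis, and always commuting with duality) induce homomorphisms $\jmath^*\colon\wgc{}{\cat{B}^0}\to\wgc{}{\cat{C}^0}$ and $\jmath_{!*}\colon\wgc{}{\cat{C}^0}\to\wgc{}{\cat{B}^0}$, and that the triangulated identities $\jmath^*\jmath_!\cong\id\cong\jmath^*\jmath_*$ together with $t$-exactness of $\jmath^*$ give $\jmath^*\jmath_{!*}\cong\id$, hence $\jmath^*\circ\jmath_{!*}=\id$ on Witt groups. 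Then I would rewrite Theorem~\ref{sum thm}, using exactness of $\imath_*$, as $\imath_*[\imath^{!*}\beta]=[\beta]-[\jmath_{!*}\jmath^*\beta]$ in $\wgc{}{\cat{B}^0}$, where $[\imath^{!*}\beta]\in\wgc{}{\cat{A}^0}$ denotes the Witt class of the possibly degenerate form $\imath^{!*}\beta$, legitimate by Corollary~\ref{deg or nondeg}. The right-hand side now depends only on $[\beta]$; and since $\cat{B}^0$ is noetherian, the proof of Proposition~\ref{direct sum decomp} shows $\imath_*\colon\wgc{}{\cat{A}^0}\to\wgc{}{\cat{B}^0}$ is injective, being the inclusion of a direct summand. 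Hence $[\imath^{!*}\beta]$ depends only on $[\beta]$, and a short additivity check from the same formula shows $[\beta]\mapsto[\imath^{!*}\beta]$ is a homomorphism $\wgc{}{\cat{B}^0}\to\wgc{}{\cat{A}^0}$.

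Granting this, set $\Phi([\beta])=([\imath^{!*}\beta],\jmath^*[\beta])$ and $\Psi([\alpha],[\gamma])=\imath_*[\alpha]+[\jmath_{!*}\gamma]$; both are well-defined homomorphisms by the above. The composite $\Psi\circ\Phi$ is the identity by the very formula~(\ref{witt decomposition}). For $\Phi\circ\Psi$, set $[\beta]=\imath_*[\alpha]+[\jmath_{!*}\gamma]$ and apply $\jmath^*$: since $\jmath^*\imath_*=0$ and $\jmath^*\circ\jmath_{!*}=\id$ one gets $\jmath^*[\beta]=[\gamma]$, and then the rewritten Theorem~\ref{sum thm} gives $\imath_*[\imath^{!*}\beta]=[\beta]-[\jmath_{!*}\jmath^*\beta]=[\beta]-[\jmath_{!*}\gamma]=\imath_*[\alpha]$, so $[\imath^{!*}\beta]=[\alpha]$ by injectivity of $\imath_*$. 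Thus $\Phi$ and $\Psi$ are mutually inverse isomorphisms $\wgc{}{\cat{B}^0}\cong\wgc{}{\cat{A}^0}\oplus\wgc{}{\cat{C}^0}$.

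The step I expect to be the main obstacle is the last one: checking that this is the \emph{same} decomposition as in Proposition~\ref{direct sum decomp}. Merely knowing that $\imath_*$ is the inclusion of the first summand and $\jmath^*$ the projection onto the second does not determine the decomposition, since a complement can be twisted by an arbitrary homomorphism $\wgc{}{\cat{C}^0}\to\wgc{}{\cat{A}^0}$. To close the gap I would invoke exactness of $\jmath_{!*}$ once more: it forces each $\jmath_{!*}\gamma$ to have all composition factors among the simple objects $\jmath_{!*}c$ (simple in $\cat{B}^0$ by \cite[Proposition~1.4.26]{bbd}), so $[\jmath_{!*}\gamma]$ lies in the subgroup $\wgc{}{\langle\jmath_{!*}c\rangle}$, which is exactly the second summand appearing in the proof of Proposition~\ref{direct sum decomp}. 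Since that proof shows $\jmath^*$ restricts to an isomorphism $\wgc{}{\langle\jmath_{!*}c\rangle}\xrightarrow{\sim}\wgc{}{\cat{C}^0}$ and we have $\jmath^*\circ\jmath_{!*}=\id$, the map $[\gamma]\mapsto[\jmath_{!*}\gamma]$ is the inverse of that isomorphism; combined with the fact that $\imath_*$ furnishes the first summand, this shows $\Phi$ and $\Psi$ realise precisely the decomposition of Proposition~\ref{direct sum decomp}. (The exactness hypothesis is essential: without it $\imath^{!*}$ need not descend to Witt classes at all, and by Example~\ref{ie not exact 1} the individual terms of~(\ref{witt decomposition}) can genuinely depend on the representative $\beta$.)
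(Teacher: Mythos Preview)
Your proposal is correct and follows essentially the same approach as the paper: use exactness of $\jmath_{!*}$ to get an induced Witt-group map splitting $\jmath^*$, then use injectivity of $\imath_*$ together with the splitting relation to deduce that $[\imath^{!*}\beta]$ is well-defined and splits $\imath_*$. You spell out in more detail than the paper why the resulting decomposition coincides with that of Proposition~\ref{direct sum decomp}; the paper handles this in one line by noting that exactness forces $[\jmath_{!*}\gamma]$ to be Witt-equivalent to a sum of forms on intermediate extensions of simple objects, which is the same observation you make about composition factors.
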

Another important case is when the form $\beta$ is anisotropic.
\begin{corollary}
\label{anisotropic splitting}
Suppose $\cat{B}^0$ is noetherian and $\beta \colon  b \to Db$ is a non-degenerate anisotropic symmetric form in $\cat{B}^0$. Then there is an isometry $\beta \cong \imath_*\imath^{!*}\beta \oplus \jmath_{!*}\jmath^*\beta$ and (\ref{witt decomposition}) corresponds to the direct sum decomposition of Proposition \ref{direct sum decomp}. 
\end{corollary}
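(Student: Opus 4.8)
The plan is to reduce the statement to Example~\ref{abstract-nonsingular} by exhibiting $\beta$, up to isometry, as a direct sum $\imath_*\alpha\oplus\jmath_{!*}\gamma$ of non-degenerate symmetric forms coming from the two sides. First I would use that $\cat{B}^0$ is noetherian and $\beta$ is anisotropic to invoke Corollary~\ref{simples generate witt gp}, writing $\beta\cong\bigoplus_k\sigma_k$ with each $\sigma_k$ a non-degenerate symmetric form on a simple object $s_k$ of $\cat{B}^0$. By \cite[Proposition~1.4.26]{bbd} every simple object of $\cat{B}^0$ is of the form $\imath_*a$ for a simple $a\in\cat{A}^0$ or of the form $\jmath_{!*}c$ for a simple $c\in\cat{C}^0$, and these two families are $D$-stable; regrouping the $\sigma_k$ according to the type of $s_k$ gives an isometry $\beta\cong\beta_A\oplus\beta_C$ in which the underlying object of $\beta_A$ has all its composition factors in the first family and that of $\beta_C$ in the second. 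Since $\imath_*$ is fully faithful and commutes with duality, $\beta_A$ is isometric to $\imath_*\alpha$ for a non-degenerate symmetric anisotropic form $\alpha$ in $\cat{A}^0$; since $\jmath_{!*}$ is additive, fully faithful, commutes with duality and satisfies $\jmath^*\jmath_{!*}\cong\id$, the same reasoning shows $\beta_C\cong\jmath_{!*}\gamma$ where $\gamma:=\jmath^*\beta_C$ is a non-degenerate symmetric anisotropic form in $\cat{C}^0$.

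Now $\beta$ is isometric to a form of the type considered in Example~\ref{abstract-nonsingular}, so $\imath^{!*}\beta\cong\alpha$ and $\jmath^*\beta\cong\gamma$; hence $\imath_*\imath^{!*}\beta\cong\beta_A$ and $\jmath_{!*}\jmath^*\beta\cong\beta_C$, which gives the required isometry
\[
\beta\cong\imath_*\imath^{!*}\beta\oplus\jmath_{!*}\jmath^*\beta
\]
with both summands non-degenerate and anisotropic. For the last claim I would check that this is exactly the decomposition underlying Proposition~\ref{direct sum decomp}: the summand $\imath_*\imath^{!*}\beta\cong\beta_A$ lies in the Serre subcategory $\langle\imath_*a\rangle$ since it is in the essential image of $\imath_*$, and $\jmath_{!*}\jmath^*\beta\cong\jmath_{!*}\gamma$ lies in $\langle\jmath_{!*}c\rangle$ because, writing $\gamma\cong\bigoplus_i\gamma_i$ as a sum of forms on simple objects $c_i\in\cat{C}^0$, additivity of $\jmath_{!*}$ gives $\jmath_{!*}\gamma\cong\bigoplus_i\jmath_{!*}\gamma_i$, a form on $\bigoplus_i\jmath_{!*}c_i$ with all composition factors of the second type by \cite[Proposition~1.4.26]{bbd}. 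By uniqueness of the decomposition of an anisotropic form into its components in these two $D$-stable families of simples, $\beta\cong\imath_*\imath^{!*}\beta\oplus\jmath_{!*}\jmath^*\beta$ is the decomposition of Proposition~\ref{direct sum decomp}; under the isomorphisms $\wgc{}{\cat{A}^0}\cong\wgc{}{\langle\imath_*a\rangle}$ induced by $\imath_*$ and $\wgc{}{\cat{C}^0}\cong\wgc{}{\langle\jmath_{!*}c\rangle}$ induced by $\jmath^*$ the two components are $[\imath^{!*}\beta]$ and $[\jmath^*\beta]$, so (\ref{witt decomposition}) is the image in $\wgc{}{\cat{B}^0}$ of that direct sum decomposition.

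A shorter route to the isometry $\beta\cong\imath_*\imath^{!*}\beta\oplus\jmath_{!*}\jmath^*\beta$ is simply to rerun the proof of Theorem~\ref{sum thm}: the splitting carried out there, via Corollary~\ref{splitting cor} and Proposition~\ref{splitting relation}, produces two reduced forms which are identified up to isometry with $\imath_*\imath^{!*}\beta$ and $\jmath_{!*}\jmath^*\beta$, and when $\beta$ is anisotropic Remarks~\ref{splitting remarks}(3) upgrades the resulting Witt-class identity to an honest orthogonal decomposition. I expect the main obstacle in either approach to be the same bit of bookkeeping: making sure the orthogonal summand one controls is genuinely (isometric to) $\imath_*\imath^{!*}\beta$, respectively $\jmath_{!*}\jmath^*\beta$, and sits in the correct Serre subcategory. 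Once the summands are identified with intermediate extensions --- via Example~\ref{abstract-nonsingular} in the first approach, or via the identification already made in the proof of Theorem~\ref{sum thm} in the second --- everything else reduces to routine manipulations with additive, fully faithful, duality-preserving functors.
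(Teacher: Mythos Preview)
Your proposal is correct. Your ``shorter route'' is exactly the paper's proof: invoke Remarks~\ref{splitting remarks}(3) in the setting of Theorem~\ref{sum thm} to upgrade the Witt relation to an isometry, then check that $\imath_*\imath^{!*}b$ and $\jmath_{!*}\jmath^*b$ land in the appropriate Serre subcategories (the paper does this by noting that $\imath_*\imath^{!*}b$ manifestly has only $\imath_*$-type factors, while $\jmath_{!*}\jmath^*b$, being an intermediate extension, has no $\imath_*$-type subobjects and hence, being anisotropic and therefore semisimple, no such factors).

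Your first approach is a genuine alternative: rather than running the splitting machinery and then identifying the pieces, you invoke Corollary~\ref{simples generate witt gp} to write the anisotropic $\beta$ as an orthogonal sum of forms on simples up front, sort the simples by type using \cite[Proposition~1.4.26]{bbd}, and then appeal to Example~\ref{abstract-nonsingular} to recognise the two blocks as $\imath_*\imath^{!*}\beta$ and $\jmath_{!*}\jmath^*\beta$. This is slightly longer but arguably more transparent, since it makes explicit that the underlying object of an anisotropic form is semisimple and that the decomposition is just the regrouping of its simple summands. The paper's route is more economical because it reuses the computation already done in Theorem~\ref{sum thm}; your route trades that economy for a cleaner separation between the structure theory of anisotropic forms and the identification of the summands.
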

\begin{proof}
The existence of the isometry follows from Remarks \ref{splitting remarks}. As a consequence $\imath_*\imath^{!*}\beta$ and $\jmath_{!*}\jmath^*\beta$ are also anisotropic. Hence each is a direct sum of non-degenerate forms on simple objects. It is clear that $\imath_*\imath^{!*}b$ has only factors of the form $\imath_*a$ for simple $a\in \cat{A}^0$. Since the intermediate extension $\jmath_{!*}\jmath^*b$ cannot have subobjects of the form $\imath_*a$ it follows that no such objects can appear when we write it as a direct sum of simple objects. Hence $\imath_*\imath^{!*}b \in \langle \imath_*a \ | \textrm{simple}\ a\in\cat{A}^0 \rangle$ and $\jmath_{!*}\jmath^*b \in \langle \jmath_{!*}c \ | \textrm{simple}\ c\in\cat{C}^0 \rangle$. The result follows.
\end{proof}
In particular, it follows that the classes $[\imath^{!*}\beta']$ and $[\jmath_{!*}\gamma']$ are well-defined independent of the choice of {\em anisotropic} representatives $\beta'$ for $[\beta]$ and $\gamma'$ for $[\gamma]$. Thus we can define homomorphisms
\[
\imath^{!*} \colon  \wgc{}{\cat{B}^0} \to  \wgc{}{\cat{A}^0} \colon  [\beta] \mapsto [\imath^{!*}\beta']
\]
and $\jmath_{!*} \colon  \wgc{}{\cat{C}^0} \to  \wgc{}{\cat{B}^0} \colon  [\gamma] \mapsto [\jmath_{!*}\gamma']$ where $\beta'$ and $\gamma'$ are (choices of) anisotropic representatives. The projections and inclusions of the direct sum decomposition are then the homomorphisms
\begin{equation}
\label{inclusions and projections}
\begin{tikzcd}
\wgc{}{\cat{A}^0} \ar[bend left=10]{r}{\imath_*} 
& \wgc{}{\cat{B}^0} \ar[bend left=10]{l}{\imath^{!*}} \ar[bend left=10]{r}{\jmath^*}
& \wgc{}{\cat{C}^0} \ar[bend left=10]{l}{\jmath_{!*}}.
\end{tikzcd}
\end{equation}
In practice it may be difficult to identify maximal isotropic subobjects in $\cat{B}^0$, but easier to do so in $\cat{C}^0$, for instance in the next section $\cat{B}^0$ will be a category of perverse sheaves and $\cat{C}^0$ a category of local systems on a stratum. The following approach allows one to compute the canonical direct sum decomposition of Proposition \ref{direct sum decomp} provided one can find maximal isotropic subobjects in $\cat{C}^0$. Let $c \mono \jmath^*b$ be a maximal isotropic subobject of $\jmath^*\beta$. Then $\jmath_{!*}c \mono \jmath_{!*}\jmath^*b$ is isotropic for $\jmath_{!*}\jmath^*\beta$. Let $\beta' = \ir{\jmath_{!*}\jmath^*\beta}{\jmath_{!*}c} $ be the reduction. Apply Theorem \ref{sum thm} to $\beta'$ to obtain
\[
[\jmath_{!*}\jmath^*\beta] = [\beta'] =[\imath_*\imath^{!*}\beta'] + [\jmath_{!*}\jmath^*\beta']
\] 
and note that $\jmath^*\beta' = \ir{\jmath^*\beta}{c}$ is anisotropic. It follows that this is the canonical decomposition of $[\jmath_{!*}\jmath^*\beta]$. Hence the canonical decomposition of $[\beta]$ is 
\begin{equation}
\label{explicit sum}
[\beta] = \left( [\imath_*\imath^{!*}\beta] + [\imath_*\imath^{!*}\beta']\right) + [\jmath_{!*}\jmath^*\beta'].
\end{equation}

It is clear that $\imath_*$ and $\jmath^*$ preserve anisotropy; the same holds for intermediate extension and restriction:
\begin{lemma}
\label{anisotropy preserved}
If $\beta\colon b \to Db$ and $\gamma\colon c\to Dc$ are anisotropic symmetric forms in  $\cat{B}$ and $\cat{C}$ respectively then $\imath^{!*}\beta$ and $\jmath_{!*}\gamma$ are also anisotropic.
\end{lemma}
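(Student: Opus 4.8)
The plan is to treat the two functors by separate arguments, exploiting in each case that the condition ``$\beta|_a=0$'' defining an isotropic subobject is a bare morphism equation, hence well behaved under suitable functors. For $\jmath_{!*}\gamma$ I would argue by contradiction. Write $c$ for the object underlying $\gamma$ and suppose $a\mono \jmath_{!*}c$ is a nonzero $\jmath_{!*}\gamma$-isotropic subobject. Since the $t$-structures are glued, $\jmath^*$ is $t$-exact and so restricts to an exact functor $\cat{B}^0\to\cat{C}^0$ that commutes with duality; as restriction of forms commutes with such functors, and $\jmath^*\jmath_{!*}c\cong c$, we obtain $\gamma|_{\jmath^*a}=\jmath^*\!\bigl((\jmath_{!*}\gamma)|_a\bigr)=0$, so $\jmath^*a$ is a $\gamma$-isotropic subobject of $c$ and anisotropy of $\gamma$ forces $\jmath^*a=0$. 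The gluing triangle $\jmath_!\jmath^*a\to a\to\imath_*\imath^*a\to\jmath_!\jmath^*a[1]$ then gives $a\cong\imath_*\imath^*a$ with $\imath^*a\in\cat{A}^0$ nonzero; but this makes $a$ a nonzero subobject of $\jmath_{!*}c$ lying in the image of $\imath_*$, contradicting the fact (used already in the proof of Corollary \ref{anisotropic splitting}, cf.\ \cite[Proposition 1.4.26]{bbd}) that the intermediate extension $\jmath_{!*}c$ admits no such subobject. Hence $\jmath_{!*}\gamma$ is anisotropic.

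For $\imath^{!*}\beta$ this approach fails, since $\imath^{!*}$ has no exactness properties and isotropic subobjects cannot be lifted through it; instead I would use the explicit description of $\imath_*\imath^{!*}\beta$ contained in the proof of Theorem \ref{sum thm}. There $\imath_*\imath^{!*}\beta$ is identified, up to isometry, with $\overline{\alpha}$, the reduction by its kernel of the restricted form $\alpha=\beta|_f$, where $f\colon \imath_*\pf{\imath^!}b\to b$ is the natural morphism. Factoring $f$ through its image and using that reduction by the kernel is unchanged under pre-composition with an epimorphism, one gets $\overline{\alpha}\cong\overline{\beta|_{\im f}}$. Since $\im f\mono b$ is a genuine subobject, $\beta|_{\im f}$ is the restriction of the anisotropic form $\beta$ and is therefore itself anisotropic (an isotropic subobject of $\beta|_{\im f}$ is an isotropic subobject of $\beta$); an anisotropic symmetric form in an abelian category with exact duality has zero kernel and hence is non-degenerate, so $\beta|_{\im f}$ equals its own reduction. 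Thus $\imath_*\imath^{!*}\beta\cong\beta|_{\im f}$ is anisotropic, and since $\imath_*$ is exact and fully faithful it reflects anisotropy, so $\imath^{!*}\beta$ is anisotropic. (Alternatively, applying Remarks \ref{splitting remarks}(3) via Corollary \ref{splitting cor} inside the proof of Theorem \ref{sum thm} shows that for anisotropic $\beta$ the splitting relation is realised by an honest isometry $\beta\cong\imath_*\imath^{!*}\beta\oplus\jmath_{!*}\jmath^*\beta$, whence both orthogonal summands are anisotropic.)

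The step I expect to be the main obstacle is the $\imath^{!*}$ case: because intermediate restriction is neither left nor right exact there is no direct way to pull an isotropic subobject of $\imath^{!*}\beta$ back to $\beta$, so one is forced to re-enter the proof of the splitting relation and exploit that $\imath_*\imath^{!*}\beta$ is literally, up to isometry, a reduced restriction of $\beta$. For $\jmath_{!*}$ the only delicate point is keeping track of the fact that isotropy is a morphism equation, so that it survives the exact functor $\jmath^*$, and then combining this with the characterising ``no subobjects from the closed part'' property of intermediate extensions.
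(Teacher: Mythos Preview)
Your proof is correct and, for the intermediate extension case, essentially identical to the paper's: restrict along the exact functor $\jmath^*$, use anisotropy of $\gamma$ to force $\jmath^*a=0$, then invoke the standard fact that $\jmath_{!*}c$ has no nonzero subobjects in the image of $\imath_*$.

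For the intermediate restriction case your argument is also correct, but more circuitous than the paper's. You go via the proof of Theorem~\ref{sum thm}, factor $f\colon \imath_*\pf{\imath^!}b\to b$ through its image, and then argue that reduction by the kernel is unchanged under pre-composition with an epimorphism. The paper shortcuts all of this by citing \cite[Proposition~1.4.17]{bbd} (together with $t$-exactness of $\imath_*$) to observe that $f$ is \emph{already} a monomorphism. Once you know that, $\alpha=\beta|_f$ is literally the restriction of $\beta$ to a subobject, so anisotropy of $\beta$ immediately gives anisotropy of $\alpha$; as you note, anisotropic implies zero kernel, hence $\alpha=\overline{\alpha}\cong\imath_*\imath^{!*}\beta$, and you are done. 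So the case you expected to be ``the main obstacle'' is in fact the shorter one once the relevant monomorphism is identified; your detour through image-factoring and the machinery of the splitting relation is valid but unnecessary. (Your parenthetical alternative via Corollary~\ref{anisotropic splitting} also works, but note that corollary assumes $\cat{B}^0$ noetherian, whereas the direct argument does not.)
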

\begin{proof}
For intermediate restrictions this follows from \cite[Proposition 1.4.17]{bbd} and the fact that $\imath_*$ is $t$-exact, which together imply that $\imath_*{}^p\imath^!b \to b$ is a monomorphism. For intermediate extension we note that if $c' \mono \jmath_{!*}c$ is an isotropic subobject then $\jmath^*c'=0$, otherwise it would be an isotropic subobject of $c$. Hence $c' \cong \imath_*\imath^!c'$. But this is impossible unless $c'=0$ as intermediate extensions cannot have non-zero subobjects of this form \cite[Corollaire 1.4.25]{bbd}.
\end{proof}

\begin{remark}
The results of this subsection also hold, with essentially the same proofs, in the context of gluing of abelian categories in the sense of \cite{MR2054979}. In this context one has the same six functor formalism, but with exact sequences 
\[
0\to \imath_*\imath^! \to \id \to \jmath_*\jmath^* 
\quad \textrm{and} \quad 
\jmath_!\jmath^* \to \id \to \imath_*\imath^* \to 0
\]
replacing the corresponding exact triangles, see \cite[Proposition 4.2]{MR2054979}. As above, the simple objects of the glued abelian category have either the form $\imath_*a$ or $\jmath_{!*}c$, see \cite[Lemma 2]{MR2422265}. Since we only use exactness in the middle in the proof of Theorem \ref{sum thm} everything works as before. 

This is a more general context; there are abelian gluing examples which do not come from gluing of triangulated categories. In particular Examples \ref{triangulated gluing examples} (1) and (3) can be generalised, see \cite[Lemma 2.5]{MR3742477} and \cite[Proposition 3.11]{bodzenta-canonical} respectively.
\end{remark}

\section{Application to stratified spaces}
\label{stratified spaces}

\subsection{Witt groups of local systems}

Let $X$ be a locally-connected topological space, and let $\loc{X}$ be the category of local systems on $X$ with coefficients in a field $\F$. { When $X$ is connected} this category is  equivalent to the category of $\F$-representations of the fundamental group $\pi_1 X$. A representation $\rho \colon \pi_1X \to \mathrm{GL}(V)$ has a dual representation on the vector space dual $V^*$ given by $g \mapsto \rho(g)^{-*}{ :=\rho(g^{-1})^*}$. There is an induced duality on local systems which we denote by $\mathcal{L} \mapsto \mathcal{L}^\vee$. Let $W(\loc{X})$ be the associated Witt group. It is a ring under the tensor product of local systems, and is covariantly functorial under continuous maps { (see also~\cite{Bunke-Ma}). If $X$ is a topological manifold, then there is also
a second duality  $\mathcal{L} \mapsto \mathcal{L}^\vee\otimes or_X$ obtained by in addition twisting  with the orientation sheaf $or_X$ of $X$. Let $W(\loc{X},or_X)$ be the associated Witt group, which agrees with  $W(\loc{X})$ when $X$ is oriented, i.e.\ when an isomorphism $or_X\cong \F_X$ has been chosen.}

\subsection{Witt groups of perverse sheaves}
\label{perverse sheaves}

Let $X$ be a finite-dimensional topologically stratified space, \ie a locally cone-like stratified space.  Let $\constr{X}$ be the bounded derived category of constructible sheaves of $\F$-vector spaces on $X$ for a field $\F$. The Poincar\'e--Verdier dual $D$ makes this into a category with duality. Suppose that $X$ has only  even-dimensional strata. Then there is a self-dual perversity $p(S) = -\dim S / 2$ which defines a $t$-structure $\pf{D}^{\leq 0}(X)$ on $\constr{X}$ whose heart is the category $\perv{X}$ of perverse sheaves. This is the full subcategory of $\constr{X}$ whose objects obey the vanishing conditions
\[
\shcoh{j}{k_S^*\mathcal{A}} = 0 \ \textrm{for}\  j > - \dim{S} /2 \qquad \textrm{and} \qquad
\shcoh{j}{k_S^!\mathcal{A}} = 0  \ \textrm{for}\  j < -\dim{S} / 2
\]
where $k_S \colon  S \hookrightarrow X$ is the inclusion of a stratum. The category $\perv{X}$ is both artinian and noetherian.

It follows from the fact that the above vanishing conditions are local on $X$ that tensoring with a local system $\mathcal{L}$ is an exact functor 
\[
- \otimes \mathcal{L} \colon \perv{X} \to \perv{X}.
\]
Moreover, Verdier duality and the duality on local systems are related by 
\[
D\left( \mathcal{A} \otimes \mathcal{L} \right) \cong D\mathcal{A} \otimes \mathcal{L}^\vee
\]
 where $\mathcal{A}$ is a perverse sheaf and $\mathcal{L}$ a local system. Combining these facts we obtain:
\begin{lemma}
Tensor product makes the Witt group $W(\perv{X})$ of perverse sheaves into a module over the Witt group $W(\loc{X})$ of local systems.
\end{lemma}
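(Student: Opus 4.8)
The plan is to assemble the module structure from the three ingredients that have just been established. First I would recall that, by the lemma immediately preceding the statement, tensoring with a local system $\mathcal{L}$ is an exact endofunctor $- \otimes \mathcal{L} \colon \perv{X} \to \perv{X}$, and that there is a natural isomorphism $D(\mathcal{A} \otimes \mathcal{L}) \cong D\mathcal{A} \otimes \mathcal{L}^\vee$. Thus $- \otimes \mathcal{L}$ is an exact functor from the abelian category with duality $(\perv{X}, D)$ to the abelian category with duality $(\perv{X}, D(-) \otimes \mathcal{L}^\vee)$ that commutes with duality in the sense of \S\ref{categories with duality}. As explained there, such a functor sends a symmetric form $\beta \colon \mathcal{A} \to D\mathcal{A}$ to a symmetric form $\beta \otimes \mathcal{L} \colon \mathcal{A}\otimes\mathcal{L} \to D\mathcal{A}\otimes\mathcal{L}^\vee$; if $\lambda \colon \mathcal{L} \to \mathcal{L}^\vee$ is a non-degenerate symmetric form on $\mathcal{L}$ then post-composing with $\id \otimes \lambda$ lands us back in $(\perv{X}, D)$ and yields a non-degenerate symmetric perverse form $\beta \otimes \lambda$. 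Since exact duality-preserving functors preserve metabolic forms (and direct sums), the assignment $([\beta],[\lambda]) \mapsto [\beta \otimes \lambda]$ descends to Witt classes.

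Next I would check this pairing is well-defined and biadditive on the level of Witt groups. Additivity in each variable follows from $(\mathcal{A}_1 \oplus \mathcal{A}_2)\otimes \mathcal{L} \cong (\mathcal{A}_1\otimes\mathcal{L}) \oplus (\mathcal{A}_2\otimes\mathcal{L})$ and $\mathcal{A}\otimes(\mathcal{L}_1\oplus\mathcal{L}_2) \cong (\mathcal{A}\otimes\mathcal{L}_1)\oplus(\mathcal{A}\otimes\mathcal{L}_2)$, both of which are isometries for the respective forms. For well-definedness one observes that if $\eta$ is metabolic on $\perv{X}$ with lagrangian $\mathcal{K} \mono \mathcal{N}$, then $\mathcal{K}\otimes\mathcal{L} \mono \mathcal{N}\otimes\mathcal{L}$ is a lagrangian for $\eta\otimes\lambda$ because $-\otimes\mathcal{L}$ is exact (so it preserves the defining exact sequence $0 \to \mathcal{K} \to \mathcal{N} \to D\mathcal{K} \to 0$), and likewise tensoring with a metabolic form on $\loc{X}$ produces a metabolic perverse form. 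Hence $[\beta \otimes \lambda]$ depends only on $[\beta] \in W(\perv{X})$ and $[\lambda] \in W(\loc{X})$, giving a $\Z$-bilinear map $W(\perv{X}) \times W(\loc{X}) \to W(\perv{X})$.

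Finally I would verify the module axioms. Associativity $(\beta \otimes \lambda_1) \otimes \lambda_2 \cong \beta \otimes (\lambda_1 \otimes \lambda_2)$ and compatibility of the dualities under the associativity constraint of $\otimes$ follow from the coherence (pentagon/hexagon) isomorphisms for the tensor product of sheaves together with the elementary fact that $(\mathcal{L}_1 \otimes \mathcal{L}_2)^\vee \cong \mathcal{L}_1^\vee \otimes \mathcal{L}_2^\vee$; the unit axiom is immediate from $\mathcal{A} \otimes \F_X \cong \mathcal{A}$ with $\F_X$ carrying its evident non-degenerate symmetric form. These isometries are all standard and compatible with $D$, so they pass to the Witt groups. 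The only mildly delicate point — the main thing to be careful about rather than a genuine obstacle — is keeping track of the natural isomorphism $\eta \colon FD \to DF$ when $F = -\otimes\mathcal{L}$ and checking the compatibility square of \S\ref{categories with duality} commutes, i.e. that the identification $D(\mathcal{A}\otimes\mathcal{L}) \cong D\mathcal{A}\otimes\mathcal{L}^\vee$ is compatible with the bidualities $\chi$; this is a routine but slightly fiddly diagram chase using that the double-dual isomorphism for $\mathcal{L}$ is the canonical one. Everything else is formal once the preceding lemma is in hand.
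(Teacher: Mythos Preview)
Your proposal is correct and follows exactly the approach the paper indicates: the paper simply states that combining the exactness of $-\otimes\mathcal{L}$ on $\perv{X}$ with the compatibility $D(\mathcal{A}\otimes\mathcal{L})\cong D\mathcal{A}\otimes\mathcal{L}^\vee$ yields the lemma, without writing out any further details. Your write-up is a careful unpacking of precisely this sketch---checking that the tensor pairing is well-defined on Witt classes, biadditive, and satisfies the module axioms---so there is nothing to compare; you have supplied the details the paper omits.
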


Let $\imath \colon  Y \hookrightarrow X$ be the inclusion of a closed stratified subspace, in other words $Y$ is a closed union of strata of $X$. Let $\jmath \colon  U=X-Y \hookrightarrow X$ be the complementary open inclusion.  Then 
\[
\constr{Y} \stackrel{\imath_*}{\longrightarrow} \constr{X} \stackrel{\jmath_*}{\longrightarrow} \constr{U} 
\]
is an exact triple of triangulated categories satisfying the conditions of \S\ref{gluing and splitting}. The perverse $t$-structure on $\constr{X}$ is glued from the perverse $t$-structures on $\constr{Y}$ and $\constr{U}$. For the remainder of this section we assume that the stratified space $X$ has only finitely many strata, which is the case, for instance, if it is compact.  For ease of reading we suppress extensions by zero from closed unions of strata. 

\begin{corollary}
\label{witt decomposition theorem}
There is a direct sum decomposition
\begin{equation}
\label{witt group sum}
\wgc{}{\perv{X}} \cong \bigoplus_{S\subset X} { \wgc{\epsilon_S}{\loc{S},or_S} }
\end{equation}
where $\epsilon_S=(-1)^{\dim S /2}$.
\end{corollary}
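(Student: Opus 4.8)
The plan is to prove Corollary \ref{witt decomposition theorem} by iterating the gluing decomposition of Proposition \ref{direct sum decomp} over the finite stratification, and then identifying the resulting summands attached to individual strata with Witt groups of (twisted) local systems.

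First I would set up the induction on the number of strata. Choose an open stratum $S_0 \subset X$ (for instance a top-dimensional one), let $U = S_0$ and $Y = X - S_0$, which is a closed union of strata. As recorded in \S\ref{perverse sheaves}, the triple $\constr{Y} \stackrel{\imath_*}{\to} \constr{X} \stackrel{\jmath_*}{\to} \constr{U}$ satisfies the hypotheses of \S\ref{gluing and splitting}, and the perverse $t$-structure on $\constr{X}$ is glued from those on $\constr{Y}$ and $\constr{U}$; all three hearts are noetherian since categories of perverse sheaves on (finitely stratified) spaces are. Proposition \ref{direct sum decomp} then gives $\wgc{}{\perv{X}} \cong \wgc{}{\perv{Y}} \oplus \wgc{}{\perv{S_0}}$. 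Since $Y$ has strictly fewer strata, the inductive hypothesis applies to $\wgc{}{\perv{Y}}$, and the base case is a single stratum $X = S$, a topological manifold.

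So the real content is the base case: for a connected topological manifold $S$ of dimension $\dim S$, one must identify $\wgc{}{\perv{S}}$ with $\wgc{\epsilon_S}{\loc{S}, or_S}$ where $\epsilon_S = (-1)^{\dim S/2}$. Here $\perv{S}$ with the self-dual perversity $p(S) = -\dim S/2$ is just the category of local systems shifted by $\dim S/2$: a perverse sheaf on the one-stratum space $S$ is $\mathcal{L}[\dim S/2]$ for a local system $\mathcal{L}$, and Verdier duality sends this to $\mathcal{L}^\vee \otimes or_S[\dim S/2]$ (Poincar\'e--Verdier duality on a $d$-manifold is $\mathcal{D}(-) = R\mathcal{H}om(-, or_S[d])$, which on $\mathcal{L}[d/2]$ returns $\mathcal{L}^\vee \otimes or_S[d/2]$). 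The shift by $\dim S/2$ introduces a Koszul-type sign: identifying a symmetric form $\beta\colon \mathcal{L}[\dim S/2] \to D(\mathcal{L}[\dim S/2])$ with a form on $\mathcal{L}$ valued in $\mathcal{L}^\vee\otimes or_S$, the compatibility with $\chi$ is twisted by $(-1)^{\dim S/2}$, because shifting a duality by $[n]$ multiplies the symmetry sign by $(-1)^{n(n-1)/2}$ — applied with $n = \dim S/2$ this is exactly $\epsilon_S = (-1)^{\dim S/2}$ after reducing mod the $4$-periodicity relevant to $\pm$ symmetry. Hence symmetric forms on the perverse sheaf $\mathcal{L}[\dim S/2]$ correspond to $\epsilon_S$-symmetric forms on $\mathcal{L}$ for the duality $\mathcal{L}\mapsto \mathcal{L}^\vee\otimes or_S$, and this identification is an equivalence of categories with duality, hence an isomorphism of Witt monoids carrying lagrangians to lagrangians, so $\wgc{}{\perv{S}} \cong \wgc{\epsilon_S}{\loc{S}, or_S}$.

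The main obstacle I expect is bookkeeping the signs in the base case correctly — pinning down that the shift by $\dim S/2$ contributes precisely the sign $\epsilon_S = (-1)^{\dim S/2}$ to the symmetry type, uniformly over all residues of $\dim S \bmod 4$, and checking this is compatible with the $4$-periodicity that makes only $\pm$-symmetric Witt groups appear. One must be careful that $\dim S$ is even (so $\dim S/2 \in \Z$) but $\dim S/2$ itself may be even or odd; the sign $(-1)^{\dim S/2}$ is $+1$ when $\dim S \equiv 0 \ (4)$ and $-1$ when $\dim S \equiv 2 \ (4)$, matching the factor already visible in equation (\ref{simply-connected}) of the introduction. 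A secondary point to verify is that the direct sum decomposition is independent of the choices made (which open stratum to split off, in what order) — but this follows because the canonical decomposition of Proposition \ref{direct sum decomp} is induced by the exact, duality-preserving functors $\imath_*$ and $\jmath^*$, and the Serre-subcategory description in Corollary \ref{direct-sum-cor} shows the summands are intrinsically the Witt groups of the subcategories generated by self-dual simple objects supported on each stratum; reindexing by strata is canonical.
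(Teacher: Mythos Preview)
Your proposal follows the same route as the paper: iterate Proposition~\ref{direct sum decomp} over the strata to obtain $\wgc{}{\perv{X}} \cong \bigoplus_S \wgc{}{\perv{S}}$, then identify each summand with a Witt group of twisted local systems via the shift $[\dim S/2]$. The paper's proof is terser but structurally identical.

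One wrinkle in your base-case sign computation: you invoke the formula $(-1)^{n(n-1)/2}$ and then claim it yields $(-1)^{\dim S/2}$ ``after reducing mod the $4$-periodicity''. That does not work as stated --- with $n = \dim S/2$, for $n=1$ one has $(-1)^{0} = 1 \neq -1$, and for $n=2$ one has $(-1)^{1} = -1 \neq 1$. The quantity $(-1)^{n(n-1)/2}$ is the biduality sign attached to Balmer's \emph{shifted duality} $D[-n]$, which is not the relevant operation here; what you need is the effect on symmetry of transporting forms along the equivalence $[\dim S/2]\colon \loc{S} \to \perv{S}$. The correct and simpler statement, which the paper records by citing \cite[Remark~2.16]{twg1}, is that each shift by $[1]$ swaps symmetric and antisymmetric forms, so a shift by $[\dim S/2]$ contributes exactly the sign $(-1)^{\dim S/2} = \epsilon_S$. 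You rightly flagged this as the delicate point and you land on the correct conclusion; just replace the $(-1)^{n(n-1)/2}$ justification with this direct one.
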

\begin{proof}
The decomposition is obtained by applying Proposition \ref{direct sum decomp} repeatedly to obtain
\[
\wgc{}{\perv{X}} \cong \bigoplus_{S\subset X} \wgc{}{\perv{S}}.
\]
The decomposition in the statement is equivalent: a perverse sheaf on $S$ is a local system shifted in degree by $\dim S/2$ and this accounts for the signs $\epsilon_S$ because odd shifts switch symmetric and antisymmetric forms (see \cite[Remark 2.16]{twg1}). { Moreover, Verdier duality corresponds under this identification to the duality of local systems twisted by the orientation sheaf $or_S$ of the stratum $S$.}
\end{proof}

{
\begin{example}
\label{ex:Bunke-Ma}
Assume all strata $S$ are orientable and consider the coefficient field $\F=\R$. Then 
$ \wgc{\epsilon_S}{\loc{S},or_S}\cong  \wgc{\epsilon_S}{\loc{S}}$ is by~\cite{Bunke-Ma} a direct sum of a free $\Z$-module and  a torsion module whose elements are all of order two. So the same is true for the Witt group $\wgc{}{\perv{X}}$ of perverse sheaves.
\end{example}}

We now discuss how to compute the associated `canonical decomposition' of a class in $\wgc{}{\perv{X}}$  into classes of forms on local systems on the strata, or equivalently on their intermediate extensions. Let $\imath_S \colon \overline{S} \hookrightarrow X$ and $\jmath_S \colon S \hookrightarrow \overline{S}$ be the inclusions, so that $k_S = \imath_S \circ \jmath_S$. Let $\beta|_S$ be the restricted form
\[
\jmath_S^*\pf{\imath_S}^{!}\mathcal{B} \to { \jmath_S^*\pf{\imath_S}^{!}D\mathcal{B} \cong 
\jmath_S^*D\left(\pf{\imath_S}^{*}\mathcal{B}\right) }
\]
induced by a symmetric form $\beta \colon \mathcal{B} \to D\mathcal{B}$. This restricted form may be degenerate; the associated non-degenerate form is, by definition, $\jmath_S^*\imath_S^{!*}\beta$.

 \begin{lemma}
Suppose $\beta\colon \mathcal{B}\to D\mathcal{B}$ is non-degenerate and anisotropic. Then there is an isometry
\begin{equation}
\label{anisotropic isometry}
\beta \cong \sum_{S \subset X} {\jmath_S}_{!*}\jmath_S^* \imath_S^{!*} \beta,
\end{equation}
and passing to the Witt group we obtain the canonical decomposition of $[\beta]$. 
\end{lemma}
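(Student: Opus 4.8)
The plan is to prove this by induction on the number of strata, using the splitting relation (Theorem~\ref{sum thm}) together with the fact (Remarks~\ref{splitting remarks}(3)) that for an anisotropic form the splitting relation is realised by an actual orthogonal decomposition, not merely a Witt-equivalence. First I would pick a stratum $S_0 \subset X$ which is open, so that $Y = X - S_0$ is a closed union of strata with inclusion $\imath \colon Y \hookrightarrow X$ and $\jmath \colon S_0 \hookrightarrow X$ the complementary open inclusion. Applying Theorem~\ref{sum thm} to the non-degenerate symmetric form $\beta$ in $\perv{X}$, and invoking Remarks~\ref{splitting remarks}(3) since $\beta$ is anisotropic, gives an \emph{isometry}
\[
\beta \cong \imath_*\imath^{!*}\beta \oplus \jmath_{!*}\jmath^*\beta,
\]
and by Lemma~\ref{anisotropy preserved} both summands $\imath^{!*}\beta$ in $\perv{Y}$ and $\jmath^*\beta$ in $\loc{S_0}$ (shifted) are again anisotropic. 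Note $\jmath_{!*}\jmath^*\beta = {\jmath_{S_0}}_{!*}\jmath_{S_0}^*\imath_{S_0}^{!*}\beta$ since $\imath_{S_0}$ is the identity here; this is the $S=S_0$ term.

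Next I would feed the anisotropic form $\imath^{!*}\beta$ on $\perv{Y}$ into the inductive hypothesis: $Y$ has one fewer stratum, so
\[
\imath^{!*}\beta \cong \sum_{S \subset Y} {\jmath'_S}_{!*}\jmath_S^*\imath'^{!*}_S(\imath^{!*}\beta),
\]
where $\imath'_S \colon \overline{S}^Y \hookrightarrow Y$ and $\jmath'_S \colon S \hookrightarrow \overline{S}^Y$ denote the inclusions internal to $Y$. Pushing forward along the $t$-exact functor $\imath_*$ (which commutes with $\oplus$ and with duality) turns this into an isometry of forms in $\perv{X}$. The remaining point is compatibility of the intermediate-extension and intermediate-restriction functors with this factorisation: since closed unions of strata are nested, $\overline{S}^Y = \overline{S}$ for $S \subset Y$ (the closure taken in $X$ coincides with that taken in $Y$, as $Y$ is closed), and $\imath_* {\jmath'_S}_{!*} = {\jmath_S}_{!*}$ with $\jmath_S^*\imath^{!*}_S\beta \cong \jmath_S^*\imath'^{!*}_S\imath^{!*}\beta$. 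The first uses functoriality of intermediate extension under composable inclusions into a closed subspace (which holds here because $\imath_*$ is fully faithful and $t$-exact, so the image factors through), and the second uses that $\imath^{!*}_S = \imath'^{!*}_S \circ \imath^{!*}$ when $\overline{S} \subset Y \subset X$, again because $\imath_*$ is $t$-exact so that composing the intermediate restrictions $\pf{\imath^!} \to \pf{\imath^*}$ is computed stratum by stratum. Assembling, $\beta \cong {\jmath_{S_0}}_{!*}\jmath_{S_0}^*\imath_{S_0}^{!*}\beta \oplus \bigoplus_{S \subset Y} {\jmath_S}_{!*}\jmath_S^*\imath_S^{!*}\beta$, which is exactly (\ref{anisotropic isometry}); the base case of a one-stratum space is the tautology $\beta \cong {\jmath_X}_{!*}\jmath_X^*\beta$ with $\jmath_X = \id$.

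Finally, passing to Witt classes, each summand ${\jmath_S}_{!*}\jmath_S^*\imath_S^{!*}\beta$ is a non-degenerate (anisotropic) form on a perverse sheaf supported on $\overline{S}$ whose restriction to the open stratum $S$ is the local system $\jmath_S^*\imath_S^{!*}\beta$ (up to shift), and since the intermediate extension $\jmath_S$ has no subobjects supported on $\overline{S} - S$, its class lies in the summand $\wgc{\epsilon_S}{\loc{S},or_S}$ of the decomposition (\ref{witt group sum}). Hence the displayed sum of Witt classes $[\beta] = \sum_S [{\jmath_S}_{!*}\jmath_S^*\imath_S^{!*}\beta]$ is precisely the canonical decomposition of Corollary~\ref{witt decomposition theorem}. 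The main obstacle I anticipate is the bookkeeping in the inductive step: verifying carefully that the internal closures and internal intermediate restriction/extension functors on $Y$ agree with the external ones on $X$ after applying $\imath_*$, i.e.\ the commutation identities $\imath_* {\jmath'_S}_{!*} = {\jmath_S}_{!*}$ and $\imath^{!*}_S = \imath'^{!*}_S \circ \imath^{!*}$ — these are where the $t$-exactness and full faithfulness of $\imath_*$ do the real work, and where one must be careful because intermediate restriction has no general exactness properties.
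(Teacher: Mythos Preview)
Your inductive strategy is sound and differs from the paper's proof, which instead extracts each stratum's summand directly: for a fixed $S$ it applies Corollary~\ref{anisotropic splitting} first to the pair $(\imath_S, X-\overline{S}\hookrightarrow X)$ and then to $(\jmath_S, \overline{S}-S\hookrightarrow \overline{S})$, yielding an isometry $\beta \cong \beta' \oplus {\jmath_S}_{!*}\jmath_S^*\imath_S^{!*}\beta \oplus \beta''$ in which the middle term is visibly the $S$-component of the canonical decomposition. This avoids your inductive bookkeeping entirely.

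Your argument has a real gap, however, at precisely the point you flag as the obstacle. The identity $\imath_S^{!*}\beta \cong (\imath'_S)^{!*}\imath^{!*}\beta$ does \emph{not} follow from $t$-exactness and full faithfulness of $\imath_*$: the paper notes explicitly that composition of intermediate restrictions fails in general (see Example~\ref{cs2 failure}), and $t$-exactness of $\imath_*$ is always available, so it cannot be the mechanism. What makes the composition work here is anisotropy of $\beta$: since $\ker(\pf{\imath^!}\mathcal{B}\to \pf{\imath^*}\mathcal{B})$ would yield an isotropic subobject of $\beta$, one has $\pf{\imath^!}\mathcal{B} \cong \imath^{!*}\mathcal{B} \cong \pf{\imath^*}\mathcal{B}$, and similarly at each subsequent stage because $\imath^{!*}\beta$ is again anisotropic (Lemma~\ref{anisotropy preserved}). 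Then one uses that the $\pf{\imath^!}$'s \emph{do} compose, by \cite[Proposition~1.3.17]{bbd}. This is exactly the argument in the proof of Proposition~\ref{anisotropic formula}; once you supply it your induction goes through.
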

\begin{proof}
The existence of the isometry, and the fact that it corresponds to the direct sum decomposition, follow from Corollary \ref{anisotropic splitting}: applying it first to $\imath_S$ and the complementary open inclusion, and then to $\jmath_S$ and the complementary closed inclusion yields an isometry
\[
\beta \cong \beta' \oplus {\jmath_S}_{!*}\jmath_S^* \imath_S^{!*}\beta \oplus \beta''
\]
where the middle term is the summand associated to the stratum $S$.
\end{proof}
The next lemma reduces the problem of identifying an anisotropic form on a perverse sheaf to the analogous question for local systems.
\begin{lemma}
\label{anisotropy by restrictions}
A symmetric form $\beta \colon \mathcal{B} \to D\mathcal{B}$ is anisotropic if, and only if, for each stratum $S$ the restriction $\beta|_S$ is anisotropic.
\end{lemma}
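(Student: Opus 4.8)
The plan is to prove both implications by relating isotropic subobjects of $\beta$ in $\perv{X}$ to isotropic subobjects of the restrictions $\beta|_S$ in the categories of (shifted) local systems. The forward direction is easier: if $\mathcal{A} \mono \mathcal{B}$ is a non-zero isotropic subobject, I want to produce a stratum $S$ for which the restriction to $S$ detects a non-zero isotropic subobject of $\beta|_S$. Take $S$ to be an open stratum in the support of $\mathcal{A}$, so $\jmath_S^*\mathcal{A} \neq 0$ is a (shifted) local system. Since the form $\jmath_S^*\imath_S^{!*}\beta = \beta|_S$ (as a non-degenerate form obtained from $\jmath_S^*\pf{\imath_S}^!\mathcal{B}$) and the composite $\mathcal{A} \mono \mathcal{B}$ factors appropriately, the restriction $\jmath_S^*\imath_S^{!*}$ applied to the isotropic inclusion yields an isotropic subobject of $\beta|_S$; one needs to check it is non-zero, which follows from the choice of $S$ as a generic point of $\mathrm{supp}(\mathcal{A})$ together with the fact that $\imath_S^{!*}$ and $\jmath_S^*$ are suitably compatible with subobjects supported away from smaller strata. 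So $\beta|_S$ has a non-zero isotropic subobject, i.e. is not anisotropic — this is the contrapositive.

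For the reverse direction, suppose $\beta$ is not anisotropic, so there is a non-zero isotropic $\mathcal{A} \mono \mathcal{B}$; I must find a stratum $S$ with $\beta|_S$ not anisotropic. This is essentially the same analysis: let $S$ be an open stratum of $\mathrm{supp}\,\mathcal{A}$. The point is that for such $S$ the functor $\jmath_S^*\imath_S^{!*}$ carries the isotropic inclusion $\mathcal{A}\mono\mathcal{B}$ to a non-zero isotropic inclusion for $\beta|_S$. Conversely, if some $\beta|_S$ has a non-zero isotropic sub-local-system $\mathcal{L} \mono \jmath_S^*\imath_S^{!*}\mathcal{B}$, one pushes it forward: ${\jmath_S}_{!*}\mathcal{L} \mono {\jmath_S}_{!*}\jmath_S^*\imath_S^{!*}\mathcal{B}$, and then one needs to lift this to an isotropic subobject of $\mathcal{B}$ itself. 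Here the splitting isometry is not available since $\beta$ need not be anisotropic, so instead I would argue directly: using the adjunctions, $\mathcal{L}$ isotropic for $\jmath_S^*\pf{\imath_S}^!\beta$ means, after adjunction, a map into $\mathcal{B}$ whose image is isotropic, because the pairing $\jmath_S^*\pf{\imath_S}^!\mathcal{B} \to \jmath_S^* D\pf{\imath_S}^*\mathcal{B}$ factors the restriction of $\beta$ to the relevant subobject.

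The cleanest organisation is probably to prove: \emph{$\beta$ is anisotropic iff $\jmath_S^*\imath_S^{!*}\beta$ is anisotropic for every stratum $S$}, and then observe that $\jmath_S^*\imath_S^{!*}\beta$ is exactly the non-degenerate form associated to $\beta|_S$, which is anisotropic iff $\beta|_S$ is. For the equivalence I would use Lemma~\ref{anisotropy preserved} (which gives that $\imath_S^{!*}$ and $\jmath_S^*$ preserve anisotropy, handling the ``only if'' direction immediately once one knows that the generic restriction of a non-zero subobject is non-zero), and for the ``if'' direction I would run the induction on strata implicit in Corollary~\ref{witt decomposition theorem}: order the strata by dimension, split off the open stratum $U$ using the gluing triple $\perv{Y} \to \perv{X} \to \perv{U}$. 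If $\beta$ has a non-zero isotropic $\mathcal{A}$, either $\jmath^*\mathcal{A} \neq 0$, giving an isotropic subobject of $\jmath^*\beta = \beta|_U$ directly; or $\jmath^*\mathcal{A}=0$, so $\mathcal{A}$ is supported on $Y$, hence $\mathcal{A} \cong \imath_*\imath^*\mathcal{A}$ is pulled back from an isotropic subobject of $\imath^{!*}\beta$ in $\perv{Y}$, and one concludes by induction on the number of strata.

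The main obstacle will be the bookkeeping around the intermediate restriction $\imath_S^{!*}$: unlike $\imath_*$ and $\jmath^*$ it has no exactness properties, so I cannot simply apply it to a short exact sequence of subobjects. The key technical fact I will need is that for an open stratum $S$ of $\mathrm{supp}\,\mathcal{A}$ the natural map $\jmath_S^*\imath_S^{!*}\mathcal{A} \to \jmath_S^*\imath_S^{!*}\mathcal{B}$ is still a monomorphism with non-zero source, and that isotropy is inherited. This should follow from \cite[Proposition~1.4.17]{bbd} together with the compatibility of $\imath^{!*}$ with the form (``$\imath_*\imath^{!*}\beta$ is the non-degenerate form associated to the restriction''), as already used in the proof of Lemma~\ref{anisotropy preserved}, but making the inductive step airtight — in particular ensuring that when $\jmath^*\mathcal{A} = 0$ the subobject of $\imath^{!*}\beta$ obtained is genuinely isotropic and genuinely non-zero — is where the care is required.
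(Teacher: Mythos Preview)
Your proposal would ultimately work, but it is considerably more complicated than the paper's proof, and the complication stems from one avoidable choice: you work with the intermediate restriction $\imath_S^{!*}$ and the non-degenerate form $\jmath_S^*\imath_S^{!*}\beta$, whereas the paper works directly with the left $t$-exact functor $\pf{\imath_S^!}$ and the (possibly degenerate) form $\beta|_S$ on $\jmath_S^*\pf{\imath_S^!}\mathcal{B}$. Since $\pf{\imath_S^!}$ is left exact and $\jmath_S^*$ is exact, the map $\jmath_S^*\pf{\imath_S^!}\mathcal{A} \hookrightarrow \jmath_S^*\pf{\imath_S^!}\mathcal{B}$ is automatically a monomorphism; for $S$ a \emph{maximal} stratum in the support of $\mathcal{A}$ one has $\jmath_S^*\pf{\imath_S^!}\mathcal{A} = k_S^*\mathcal{A} \neq 0$, and isotropy is inherited because the form factors through the restriction. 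That is the entire argument for one direction --- no induction, no appeal to Lemma~\ref{anisotropy preserved}, and none of the bookkeeping about non-exactness of $\imath_S^{!*}$ that you correctly flag as the main obstacle in your approach. For the other direction the paper does exactly what you sketch in passing (``after adjunction, a map into $\mathcal{B}$ whose image is isotropic''): given isotropic $\mathcal{C} \hookrightarrow \jmath_S^*\pf{\imath_S^!}\mathcal{B}$, take the image of $\pf{\jmath_S}_!\mathcal{C} \to \pf{\imath_S^!}\mathcal{B} \to \mathcal{B}$. Your detour through $\jmath_{S!*}$ and the inductive gluing argument is unnecessary.

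Two smaller points. First, your first and second paragraphs actually argue the \emph{same} implication (both begin ``suppose there is a non-zero isotropic $\mathcal{A}\mono\mathcal{B}$''); only the ``Conversely'' clause in paragraph two addresses the other direction. Second, be careful with the definition: in the paper $\beta|_S$ denotes the possibly degenerate form on $\jmath_S^*\pf{\imath_S^!}\mathcal{B}$, and $\jmath_S^*\imath_S^{!*}\beta$ is its associated non-degenerate form. These have the same isotropic subobjects only when $\beta|_S$ is already non-degenerate, which is part of what the lemma establishes when $\beta$ is anisotropic.
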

\begin{proof}
Suppose $\mathcal{A}\hookrightarrow \mathcal{B}$ is a non-zero isotropic subobject for $\beta$. Let $S$ be a maximal stratum for which $\mathcal{A}|_S\neq 0$. Then $k_S^*\mathcal{A} = \jmath_S^*\pf{\imath_S^!}\mathcal{A}  \hookrightarrow \jmath_S^*\pf{\imath_S^!}\mathcal{B}$ is a non-zero isotropic subobject for the restriction $\beta|_S$. 

In the other direction, if $\mathcal{C} \hookrightarrow \jmath_S^*\pf{\imath_S^!}\mathcal{B}$ is a non-zero isotropic subobject for $\beta|_S$ then the image of the composite
\[
\pf{\jmath_S}_! \mathcal{C} \to \pf{\imath_S^!}\mathcal{B} \to \mathcal{B}
\]
is a non-zero isotropic subobject for $\beta$.
\end{proof}
We now describe an inductive procedure for computing the canonical decomposition of a general class in $W(\perv{X})$. In order to do so we extend the partial order $S \leq T \iff S \subset \overline{T}$ on the strata of $X$ to a total order, and label the strata so that $S_1 > \ldots > S_n$. For $1\leq k<n$ let 
\[
\imath_k \colon S_{k+1}\cup\cdots\cup S_n \hookrightarrow S_k\cup\cdots\cup S_n 
\]
be the closed inclusion, and for $1\leq k\leq n$ let 
\[
\jmath_k \colon S_k \hookrightarrow S_k\cup\cdots\cup S_n
\]
 be the (complementary) open inclusion, in particular with $\jmath_n \colon S_n \to S_n$ the identity. {Let $\tilde \imath_k = \imath_1\imath_2\cdots \imath_k \colon S_{k+1}\cup\cdots\cup S_n \hookrightarrow X$ be the composite.}
\begin{lemma}
\label{anisotropy in stages}
Suppose $\beta \colon \mathcal{B} \to D\mathcal{B}$ is a non-degenerate symmetric form in $\perv{X}$ such that $\beta|_{S_1\cup\ldots \cup S_{k-1}}$ is anisotropic. Then $\beta$ has an isotropic subobject such that the reduction by it, say $\beta' \colon \mathcal{B}' \to D\mathcal{B}'$, satisfies
\begin{enumerate}
\item $\beta'|_{S_1 \cup \cdots \cup S_{k-1}} = \beta|_{S_1 \cup \cdots \cup S_{k-1}}$;
\item $\beta'|_{S_k}$ is the reduction of $\beta|_{S_k}$ by a maximal isotropic subobject.
\end{enumerate}
Note that  Lemma \ref{anisotropy by restrictions} then implies that $\beta'|_{S_1 \cup \cdots \cup S_{k}}$ is anisotropic.
\end{lemma}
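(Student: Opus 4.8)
The plan is to produce the required isotropic subobject of $\beta$ by lifting a maximal isotropic subobject of the restricted form $\beta|_{S_k}$, which lives in the noetherian category of shifted local systems on $S_k$, and then to read off the two properties from the fact that the lifted subobject is supported on $\overline{S_k}$.

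First I would descend to $S_k$. The restricted form $\beta|_{S_k}$ is a (possibly degenerate) symmetric form on the shifted local system $\jmath_{S_k}^*\,\pf{\imath_{S_k}}^{!}\mathcal{B}$, so since $\loc{S_k}$ is noetherian it admits a maximal isotropic subobject $\mathcal{C}$ with $\ir{(\beta|_{S_k})}{\mathcal{C}}$ anisotropic --- argue as in the proof of Corollary~\ref{simples generate witt gp}, or concretely take $\mathcal{C}$ to be the preimage, under reduction by the kernel, of a maximal isotropic subobject of $\overline{\beta|_{S_k}}$ (using Lemma~\ref{reduction in stages}). Then I would lift $\mathcal{C}$ exactly as in the proof of Lemma~\ref{anisotropy by restrictions}: the inclusion $\mathcal{C}\hookrightarrow\jmath_{S_k}^*\,\pf{\imath_{S_k}}^{!}\mathcal{B}$ adjoins to $\pf{\jmath_{S_k}}_!\mathcal{C}\to\pf{\imath_{S_k}}^{!}\mathcal{B}$, and composition with ${\imath_{S_k}}_*\pf{\imath_{S_k}}^{!}\mathcal{B}\to\mathcal{B}$ gives ${\imath_{S_k}}_*\pf{\jmath_{S_k}}_!\mathcal{C}\to\mathcal{B}$, whose image $\imath_{\mathcal{A}}\colon\mathcal{A}\hookrightarrow\mathcal{B}$ is $\beta$-isotropic. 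Put $\beta'=\ir{\beta}{\mathcal{A}}$, a non-degenerate symmetric form on $\mathcal{B}'=\oc{\mathcal{A}}{\beta}/\mathcal{A}$. Two features of $\mathcal{A}$ drive everything: it is supported on $\overline{S_k}$, hence vanishes on each $S_i$ with $i<k$ and on the open union $S_1\cup\cdots\cup S_{k-1}$; and, $S_k$ being the generic stratum of $\overline{S_k}$, the natural map $k_{S_k}^!\mathcal{A}\to k_{S_k}^*\mathcal{A}$ is an isomorphism, so that $\jmath_{S_k}^*\,\pf{\imath_{S_k}}^{!}\mathcal{A}=\jmath_{S_k}^*\,\pf{\imath_{S_k}}^{*}\mathcal{A}$ is a shifted local system which, tracing through the construction, recovers $\mathcal{C}$ as a subobject of $\jmath_{S_k}^*\,\pf{\imath_{S_k}}^{!}\mathcal{B}$.

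For (1): restriction to the open union $S_1\cup\cdots\cup S_{k-1}$ is exact and annihilates $\mathcal{A}$, so it turns $\oc{\mathcal{A}}{\beta}=\ker(D\imath_{\mathcal{A}}\circ\beta)$ into all of $\mathcal{B}$ and kills $\mathcal{A}$; hence $\beta'$ restricts there to the same form as $\beta$, and in particular $\beta'|_{S_i}=\beta|_{S_i}$ for each $i<k$, which is anisotropic by hypothesis. For (2): I would compute $\beta'|_{S_k}$ directly, using that $\jmath_{S_k}^*\,\pf{\imath_{S_k}}^{!}$ is left exact. Applied to $\oc{\mathcal{A}}{\beta}=\ker(\mathcal{B}\to D\mathcal{A})$ it yields $\jmath_{S_k}^*\,\pf{\imath_{S_k}}^{!}\oc{\mathcal{A}}{\beta}=\ker\big(\jmath_{S_k}^*\,\pf{\imath_{S_k}}^{!}\mathcal{B}\to\jmath_{S_k}^*\,\pf{\imath_{S_k}}^{!}D\mathcal{A}\big)$, and the target equals $D\mathcal{C}$ because $D\mathcal{A}$ is again supported on $\overline{S_k}$; a short diagram chase using naturality of $\pf{\imath_{S_k}}^{!}\to\pf{\imath_{S_k}}^{*}$ (an isomorphism on $\mathcal{A}$) identifies this map $\jmath_{S_k}^*\,\pf{\imath_{S_k}}^{!}\mathcal{B}\to D\mathcal{C}$ with the one defining $\oc{\mathcal{C}}{\beta|_{S_k}}$. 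Applying $\jmath_{S_k}^*\,\pf{\imath_{S_k}}^{!}$ to $0\to\mathcal{A}\to\oc{\mathcal{A}}{\beta}\to\mathcal{B}'\to 0$, and noting that $k_{S_k}^!\mathcal{A}$ is concentrated in perverse degree $0$ so the relevant long exact sequence terminates on the right, one gets $\jmath_{S_k}^*\,\pf{\imath_{S_k}}^{!}\mathcal{B}'=\oc{\mathcal{C}}{\beta|_{S_k}}/\mathcal{C}$ with induced form $\ir{(\beta|_{S_k})}{\mathcal{C}}$ by uniqueness of induced morphisms. This is anisotropic by the choice of $\mathcal{C}$, which is (2); the closing assertion then follows from Lemma~\ref{anisotropy by restrictions}.

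I expect the main obstacle to be the bookkeeping in (2): verifying that the left-exact functor $\jmath_{S_k}^*\,\pf{\imath_{S_k}}^{!}$ carries the short exact sequences defining $\mathcal{B}'=\oc{\mathcal{A}}{\beta}/\mathcal{A}$ to those defining the reduced form $\ir{(\beta|_{S_k})}{\mathcal{C}}$, compatibly with the duality. This goes through precisely because $\mathcal{A}$ and $D\mathcal{A}$ are supported on $\overline{S_k}$, where $\pf{\imath_{S_k}}^{!}$ and $\pf{\imath_{S_k}}^{*}$ agree and are exact, which removes the usual discrepancy between $\pf{\imath_{S_k}}^{!}D$ and $D\,\pf{\imath_{S_k}}^{*}$.
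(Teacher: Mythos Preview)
Your proof is correct and follows essentially the same approach as the paper: lift a maximal isotropic subobject of $\beta|_{S_k}$ to an isotropic subobject of $\beta$ via the adjunction, then observe that its support misses $S_1\cup\cdots\cup S_{k-1}$. The only cosmetic difference is that you lift through the closure $\overline{S_k}$ using $(\imath_{S_k},\jmath_{S_k})$, whereas the paper lifts through the full closed complement $S_k\cup\cdots\cup S_n$ using $(\tilde\imath_{k-1},\jmath_k)$; since $\overline{S_k}\subset S_k\cup\cdots\cup S_n$ both constructions yield isotropic subobjects supported away from $S_1\cup\cdots\cup S_{k-1}$, and your more detailed verification of (2) via left-exactness of $\jmath_{S_k}^*\,\pf{\imath_{S_k}^!}$ simply fleshes out what the paper asserts in one sentence.
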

\begin{proof}
Let { $\mathcal{A} \hookrightarrow \jmath_k^* \pf{\,\tilde \imath_{k-1}^{\,!}}\mathcal{B}$ }be a maximal isotropic subobject for $\beta|_{S_k}$. Then the image of the composite { $\pf{\jmath_{k!}}\mathcal{A} \to \pf{\,\tilde \imath_{k-1}^{\,!}}\mathcal{B}  \to \mathcal{B}$} is isotropic for $\beta$. Let $\beta'$ be the reduction. Since { $\pf{\jmath_{k!}}\mathcal{A}$} is supported on $S_k\cup \cdots \cup S_n$ the first condition
\[
\beta'|_{S_1 \cup \cdots \cup S_{k-1}} = \beta|_{S_1 \cup \cdots \cup S_{k-1}}
\]
is satisfied. 

By construction $\beta'|_{S_k}$ is anisotropic. Since $\mathcal{A}$ was chosen to be a maximal isotropic subobject $\beta'|_{S_k}$ is isometric to the reduction of $\beta|_{S_k}$ by $\mathcal{A}$. 
\end{proof}
The procedure for constructing an anisotropic representative, and for computing the canonical decomposition is as follows. Set  $\beta_0=\beta$. Using Lemma \ref{anisotropy in stages} we construct, by successive isotropic reductions, forms $\beta_1, \ldots,\beta_n$ such that 
\begin{enumerate}
\item $\beta_k |_{S_1\cup\cdots\cup S_k}$ is anisotropic;
\item $\beta_k|_{S_1 \cup \cdots \cup S_{k-1}} = \beta_{k-1}|_{S_1 \cup \cdots \cup S_{k-1}}$;
\item $\beta_k|_{S_k}$ is the reduction of $\beta_{k-1}|_{S_k}$ by a maximal isotropic subobject.
\end{enumerate}
In particular $\beta_n$ is an { anisotropic} representative for $[\beta]$, and the canonical decomposition is
\begin{equation}
\label{direct sum formula}
[\beta] = \sum_{k=1}^n [{\jmath_k}_{!*} \left( \beta_k|_{S_k}\right)].
\end{equation}
The (anti)symmetric local systems of (\ref{witt group sum}) are obtained from the $\beta_k|_{S_k}$ by shifting by $\dim S_k/2$. We now investigate circumstances in which it is possible to find  explicit expressions for the $\beta_k|_{S_k}$  in terms of $\beta$. 

Applying Theorem \ref{sum thm} inductively, starting with the complementary inclusions $(\imath_1,\jmath_1)$, one obtains a formula
\begin{equation}
\label{cs1}
[\beta] = [ {\jmath_1}_{!*}\jmath_1^*\beta] + \sum_{k=2}^{n} [{\jmath_{k}}_{!*}\jmath_{k}^*\imath_{k-1}^{!*}\cdots\imath_{1}^{!*}\beta].
\end{equation}
In general this is not the above canonical decomposition. There are many similar formul\ae, corresponding to different ways off splitting off strata. These formul\ae\  may differ from one another, and each may depend on $\beta$ not merely its class $[\beta]$. 

When $\beta$ is anisotropic Corollary \ref{anisotropic splitting} guarantees that (\ref{cs1}) is the canonical decomposition, and so must agree with (\ref{anisotropic isometry}). In fact we can verify that the given representatives of terms in (\ref{cs1}) are isometric to those in (\ref{anisotropic isometry}) not merely Witt-equivalent.
\begin{proposition}
\label{anisotropic formula}
Suppose $\beta\colon \mathcal{B}\to D\mathcal{B}$ is non-degenerate and anisotropic. Then there are isometries 
\[
{\jmath_{k}}_{!*}\jmath_{k}^*\imath_{k-1}^{!*}\cdots\imath_{1}^{!*}\beta \cong {\jmath_{S_k}}_{!*}\jmath_{S_k}^*\imath_{S_k}^{!*} \beta
\]
for each $k=2,\ldots,n$ so that  (\ref{cs1}) is the image
\begin{equation}
\label{cs2 order independent}
[\beta] = \sum_{S \subset X} [ {\jmath_S}_{!*}\jmath_S^* \imath_S^{!*} \beta ]
\end{equation}
of the isometry (\ref{anisotropic isometry}) in the Witt group.
\end{proposition}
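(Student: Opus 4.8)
The plan is to argue by induction on $n$, the number of strata, reducing each isometry claim to a single application of the anisotropic splitting of Corollary~\ref{anisotropic splitting} followed by the identification of the summands. The key point is that the iterated intermediate restrictions $\imath_{k-1}^{!*}\cdots\imath_1^{!*}\beta$ appearing in~(\ref{cs1}) are themselves anisotropic, so that at each stage we may split off an honest orthogonal direct summand rather than merely a Witt-equivalent one.

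First I would fix $k$ and consider the closed inclusion $\tilde\imath_{k-1}\colon S_k\cup\cdots\cup S_n\hookrightarrow X$ with complementary open inclusion $\tilde\jmath_{k-1}\colon S_1\cup\cdots\cup S_{k-1}\hookrightarrow X$. Since $\beta$ is anisotropic, Lemma~\ref{anisotropy preserved} (applied iteratively to the factors $\imath_1,\ldots,\imath_{k-1}$) shows $\tilde\imath_{k-1}^{!*}\beta$ is anisotropic as a form on $S_k\cup\cdots\cup S_n$. Next I would apply Corollary~\ref{anisotropic splitting} to this anisotropic form with respect to the gluing $\constr{S_{k+1}\cup\cdots\cup S_n}\to\constr{S_k\cup\cdots\cup S_n}\to\constr{S_k}$, obtaining an orthogonal decomposition whose $\jmath_k$-component is the isometry $\jmath_{k!*}\jmath_k^*\imath_{k-1}^{!*}\cdots\imath_1^{!*}\beta$. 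The remaining task is to identify this with $\jmath_{S_k!*}\jmath_{S_k}^*\imath_{S_k}^{!*}\beta$, i.e.\ to check that restricting to the stratum $S_k$ via the iterated data $\jmath_k^*\imath_{k-1}^{!*}\cdots\imath_1^{!*}$ agrees (up to isometry) with the single-step data $\jmath_{S_k}^*\imath_{S_k}^{!*}$. Here $\imath_{S_k}\colon\overline{S_k}\hookrightarrow X$ and $\jmath_{S_k}\colon S_k\hookrightarrow\overline{S_k}$ with $\overline{S_k}\subset S_k\cup\cdots\cup S_n$ a closed union of strata.

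The comparison of the two restriction recipes is where the real work lies. The functor $\jmath_k^*$ only sees the open stratum $S_k$ inside $S_k\cup\cdots\cup S_n$, so $\jmath_k^*\imath_{k-1}^{!*}\cdots\imath_1^{!*}\beta$ depends only on the behaviour of $\beta$ near $S_k$; more precisely, since $\overline{S_k}$ is closed in $S_k\cup\cdots\cup S_n$, one has $\jmath_k = \bigl(\overline{S_k}\hookrightarrow S_k\cup\cdots\cup S_n\bigr)\circ\jmath_{S_k}$ up to the implicit extension by zero, and the iterated $\imath^{!*}$'s restricted along this closed inclusion collapse: the composite of the closed inclusions $S_k\cup\cdots\cup S_n\hookrightarrow X$ followed by restriction to $\overline{S_k}$ equals $\imath_{S_k}$. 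The subtle point is that intermediate restriction $\imath^{!*}$ does \emph{not} commute with composition of closed inclusions in general (this is exactly the failure flagged by Example~\ref{cs2 failure} in the excerpt), so one cannot simply assert $\imath_{k-1}^{!*}\cdots\imath_1^{!*} = \imath_{S_k}^{!*}$. However, once $\beta$ is anisotropic, Remarks~\ref{splitting remarks}(3) turns every application of the splitting relation into an orthogonal direct-sum decomposition, and so each $\imath_j^{!*}$ acts on the relevant summand as an honest $t$-exact restriction of a direct summand; I would chase through the anisotropic orthogonal decomposition $\beta\cong\bigoplus_S \jmath_{S!*}\jmath_S^*\imath_S^{!*}\beta$ of~(\ref{anisotropic isometry}), observe that $\jmath_k^*\imath_{k-1}^{!*}\cdots\imath_1^{!*}$ kills the summand for every stratum $T\neq S_k$ with $T\not\subset\overline{S_k}$ and, because each $\imath_j$ is $t$-exact, also kills the summands for $T\subsetneq\overline{S_k}$ other than via the $\jmath_{S_k}^*$-restriction, leaving precisely $\jmath_{S_k}^*\imath_{S_k}^{!*}\beta$ on the stratum $S_k$. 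Applying $\jmath_{k!*}$ and using that intermediate extension is fully faithful then yields the claimed isometry.

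The main obstacle I anticipate is the bookkeeping in this last chase: one must carefully track how each stratum's summand in~(\ref{anisotropic isometry}) transforms under the operators $\jmath_k^*$ and $\imath_j^{!*}$, using only that these are $t$-exact on the subcategories supported off $S_k$ (for $\jmath_k^*$) and that they restrict to exact functors on direct summands of anisotropic forms. A cleaner alternative, which I would pursue in parallel, is to invoke Corollary~\ref{anisotropic splitting} directly for the pair $(\imath_{S_k},\jmath_{S_k})$ applied to the anisotropic form $\beta$ itself: this gives $\jmath_{S_k!*}\jmath_{S_k}^*\imath_{S_k}^{!*}\beta$ as the orthogonal summand of $\beta$ attached to $S_k$, and then it suffices to match two orthogonal direct-sum decompositions of the \emph{same} anisotropic form $\beta$ indexed by the same set of strata --- which, by the uniqueness of orthogonal decompositions of anisotropic forms into forms on simple objects (Corollary~\ref{simples generate witt gp} together with Youssin's isometry theorem cited there), forces the summands to agree stratum-by-stratum. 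Either route reduces the proposition to the already-established anisotropic splitting plus the semisimplicity/uniqueness package for anisotropic forms; the only genuinely new content is verifying that the support-theoretic indexing of the two decompositions coincides, which follows since both are indexed by $\{S\subset X\}$ with the $S$-summand an intermediate extension from $S$.
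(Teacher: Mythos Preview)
Your proposal is essentially correct but proceeds quite differently from the paper. Both of your routes---chasing the functors through the known semisimple decomposition~(\ref{anisotropic isometry}), or matching two orthogonal block decompositions of the same anisotropic form via Krull--Schmidt/Youssin uniqueness---do yield the claimed isometries once cleaned up. (One quibble: the phrase ``because each $\imath_j$ is $t$-exact'' is not right; what you need is that $\imath_j^{!*}$ acts as the identity on summands already supported on the closed piece and annihilates the summand $\beta_{S_j}$ because ${}^p\imath_j^!$ of an intermediate extension from the complementary open stratum vanishes by \cite[1.4.25]{bbd}. It is $\jmath_k^*$, not any $t$-exactness, that kills the lower summands.)

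The paper's argument is shorter and more direct. Rather than comparing decompositions, it proves that on an anisotropic $\beta$ the natural map ${}^p\imath^!\mathcal{B}\to{}^p\imath^*\mathcal{B}$ is already an \emph{isomorphism}: the kernel of the restricted form on ${}^p\imath^!\mathcal{B}$ embeds (via the monomorphism $\imath_*{}^p\imath^!\mathcal{B}\hookrightarrow\mathcal{B}$ of \cite[1.4.17]{bbd}) as an isotropic subobject of $\beta$, hence is zero; dually the cokernel vanishes. Thus for anisotropic $\beta$ one has $\imath^{!*}\mathcal{B}={}^p\imath^!\mathcal{B}={}^p\imath^*\mathcal{B}$, and since ${}^p\imath^!$ and ${}^p\imath^*$ \emph{do} compose (\cite[1.3.17]{bbd}), the iterated intermediate restrictions collapse: $\imath_r^{!*}\cdots\imath_1^{!*}\beta\cong(\imath_1\cdots\imath_r)^{!*}\beta$ as forms. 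This gives a natural isomorphism rather than the abstract isometry your uniqueness argument produces, and it explains conceptually \emph{why} the failure of Example~\ref{cs2 failure} disappears under anisotropy: the troublesome image functor $\imath^{!*}$ simply becomes the well-behaved ${}^p\imath^!$.
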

\begin{proof}
Let $\imath \colon Y \hookrightarrow X$ be the inclusion of a closed union of strata. Then it follows from \cite[Proposition 1.4.17]{bbd} that the (dual) natural morphisms $\imath_*{}^p\imath^!\mathcal{B} \to \mathcal{B}$ and $D\mathcal{B} \to \imath_*{}^p\imath^*D\mathcal{B}$ are respectively monomorphic and epimorphic. Hence there is a commutative diagram
\[
\begin{tikzcd}
\ker \alpha \ar[hook]{r} \ar{d}{0} & \imath_*{}^p\imath^!\mathcal{B}  \ar[hook]{r} \ar{d}{\alpha} & \mathcal{B} \ar{d}{\beta} \\
D(\ker \alpha)  & \imath_*{}^p\imath^*D\mathcal{B} \ar[two heads]{l}  & D\mathcal{B} \ar[two heads]{l}
\end{tikzcd}
\]
where $\alpha$ is the restriction of $\beta$. As $\beta$ is anisotropic we deduce that $\ker \alpha =0$, and hence also $\coker \alpha \cong D(\ker D\alpha)\cong D(\ker \alpha) =0$. Therefore $\alpha$ is an isomorphism and ${}^p\imath^!\mathcal{B} \cong \imath^{!*}\mathcal{B} \cong \pf{\imath^*}\mathcal{B}$. By \cite[Proposition 1.3.17]{bbd} ${}^p\imath_r^{!}{}^p\imath_{r-1}^! \cong {}^p(\imath_{r-1} \imath_r)^!$ and similarly ${}^p\imath_r^*{}^p\imath_{r-1}^* \cong {}^p(\imath_{r-1} \imath_r)^*$. Combining these we see that $\imath_r^{!*}\imath_{r-1}^{!*}\beta \cong (\imath_{r-1}  \imath_r)^{!*}\beta$ (as forms not merely as Witt classes). By induction $\imath_r^{!*}\cdots \imath_1^{!*}\beta = (\imath_1 \cdots \imath_r)^{!*}\beta$. 

One can then check that ${\jmath_{k}}_{!*}\jmath_{k}^*(\imath_1 \cdots   \imath_{k-1})^{!*} \cong {\jmath_{S_k}}_{!*}\jmath_{S_k}^*\imath_{S_k}^{!*}$ are naturally isomorphic, so (\ref{cs1}) becomes (\ref{cs2 order independent}). \end{proof}
For applications it is more useful to identify geometric conditions under which (\ref{cs1}) is the canonical decomposition, and hence is independent of the representative $\beta$ and choice of ordering of the strata. We approach this by identifying conditions under which intermediate extensions are exact, and then using Corollary (\ref{exact extension cor}).
\begin{lemma}
\label{global exact condition}
Suppose that $S$ is a stratum with finite fundamental group, and that the characteristic of $\F$ does not divide the order of $\pi_1S$. Then the intermediate extension ${\jmath_S}_{!*}$ is exact.
\end{lemma}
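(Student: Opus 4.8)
The plan is to deduce exactness of ${\jmath_S}_{!*}$ from semisimplicity of its source category. Since $S$ is a single stratum it carries the trivial stratification, so, as in the proof of Corollary~\ref{witt decomposition theorem}, the category $\perv{S}$ is equivalent, via the shift by $\dim S/2$, to the category $\loc{S}$ of local systems on $S$. For $S$ connected the latter is $\rep{\pi_1 S}$, the category of finite-dimensional $\F[\pi_1 S]$-modules, and the hypotheses that $\pi_1 S$ is finite and that $\mathrm{char}\,\F$ does not divide $|\pi_1 S|$ are precisely those of Maschke's theorem; hence $\F[\pi_1 S]$ is semisimple and $\perv{S}$ is a semisimple abelian category (for disconnected $S$, argue componentwise). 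In particular every short exact sequence in $\perv{S}$ splits.

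Next I would note that the intermediate extension ${\jmath_S}_{!*} = \im(\pf{\jmath_{S!}} \to \pf{\jmath_{S*}})$ commutes with finite direct sums. This holds because $\pf{\jmath_{S!}}$ and $\pf{\jmath_{S*}}$ are additive, the canonical natural transformation between them respects biproduct decompositions, and in any abelian category the image of a block-diagonal morphism is the direct sum of the images of its diagonal blocks; moreover the resulting isomorphism ${\jmath_S}_{!*}(\mathcal{A} \oplus \mathcal{C}) \cong {\jmath_S}_{!*}\mathcal{A} \oplus {\jmath_S}_{!*}\mathcal{C}$ intertwines the two biproduct structures.

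Finally, given a short exact sequence $0 \to \mathcal{A} \to \mathcal{B} \to \mathcal{C} \to 0$ in $\perv{S}$, semisimplicity splits it, so it is isomorphic to the canonical split sequence $0 \to \mathcal{A} \to \mathcal{A} \oplus \mathcal{C} \to \mathcal{C} \to 0$; applying ${\jmath_S}_{!*}$ and using the previous step produces the canonical split sequence on ${\jmath_S}_{!*}\mathcal{A}$, ${\jmath_S}_{!*}\mathcal{A} \oplus {\jmath_S}_{!*}\mathcal{C}$, ${\jmath_S}_{!*}\mathcal{C}$, which is exact, so ${\jmath_S}_{!*}$ is exact. The only delicate point is the first step --- the reduction $\perv{S} \simeq \loc{S}$ together with the characteristic-sensitive form of Maschke's theorem; everything afterwards is formal, the general failure of exactness of an intermediate extension being caused precisely by non-split extensions, which semisimplicity removes.
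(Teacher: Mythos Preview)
Your proof is correct and follows essentially the same approach as the paper: the paper's proof is the two-line observation that $\perv{S}$ is semisimple by Maschke's theorem and that ${\jmath_S}_{!*}$ is additive, from which exactness follows. You have simply unpacked each of these steps in more detail.
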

\begin{proof}
If $\pi_1S$ is finite then $\perv{S}$ is semi-simple by Maschke's Theorem. The result follows because ${\jmath_S}_{!*}$ is additive. 
\end{proof}
\begin{lemma}
\label{local exact condition}
Let  $S > T$ be strata in $X$, and let $L$ be the link of $T$ in $\overline{S}$. Suppose that the intersection cohomology group $\ih{(\dim L-1)/2}{L;\mathcal{L}}=0$ for any local system $\mathcal{L}$ on the link.
Then
\[
\ext{1}{\mathcal{A}}{\mathcal{B}} = 0 = \ext{1}{\mathcal{B}}{\mathcal{A}}
\]
where $\mathcal{A}={\jmath_S}_{!*}\mathcal{L}[\dim S /2]$ and $\mathcal{B}={\jmath_T}_{!*}\mathcal{M}[\dim T/2]$ are the intermediate extensions of (shifted) local systems respectively on $S$ and on $T$.  In fact this holds if the above intersection cohomology group vanishes for those local systems which arise as the restriction of a local system on $S$.
\end{lemma}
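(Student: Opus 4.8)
The plan is to pass to the constructible derived category, where both $\mathrm{Ext}^1$-groups become $\mathrm{Hom}$-groups. For the heart of a $t$-structure the canonical map $\ext{1}{\mathcal{A}}{\mathcal{B}}\to\Hom{\constr{X}}{\mathcal{A}}{\mathcal{B}[1]}$ is an isomorphism, and likewise with $\mathcal{A}$ and $\mathcal{B}$ interchanged, so it is enough to prove that $\Hom{\constr{X}}{\mathcal{A}}{\mathcal{B}[1]}$ and $\Hom{\constr{X}}{\mathcal{B}}{\mathcal{A}[1]}$ vanish. Write $\imath_T\colon\overline{T}\hookrightarrow X$ and $\jmath_T\colon T\hookrightarrow\overline{T}$ for the inclusions (so $k_T=\imath_T\jmath_T$), so that $\mathcal{B}={\imath_T}_*\mathcal{B}_0$ with $\mathcal{B}_0={\jmath_T}_{!*}\mathcal{M}[\dim T/2]$ perverse on $\overline{T}$. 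By adjunction these two groups are $\Hom{\constr{\overline{T}}}{\imath_T^*\mathcal{A}}{\mathcal{B}_0[1]}$ and $\Hom{\constr{\overline{T}}}{\mathcal{B}_0}{\imath_T^!\mathcal{A}[1]}$.

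Since $\mathcal{A}={\jmath_S}_{!*}\mathcal{L}[\dim S/2]$ is an intermediate extension from $S$ and every stratum of $\overline{T}$ is strictly smaller than $S$, the defining support and cosupport conditions give $\imath_T^*\mathcal{A}\in\pf{D}^{\leq -1}(\overline{T})$ and $\imath_T^!\mathcal{A}\in\pf{D}^{\geq 1}(\overline{T})$ for the perverse $t$-structure on $\overline{T}$. Applying $\Hom{\constr{\overline{T}}}{-}{\mathcal{B}_0[1]}$ to the truncation triangle $\ptb{-2}\imath_T^*\mathcal{A}\to\imath_T^*\mathcal{A}\to\pcoh{-1}{\imath_T^*\mathcal{A}}[1]$, and $\Hom{\constr{\overline{T}}}{\mathcal{B}_0}{-}$ to the dual triangle $\pcoh{1}{\imath_T^!\mathcal{A}}\to\imath_T^!\mathcal{A}[1]\to\pta{1}(\imath_T^!\mathcal{A}[1])$, and using that there are no non-zero morphisms from $\pf{D}^{\leq -2}(\overline{T})$ to $\pf{D}^{\geq -1}(\overline{T})$ nor from $\pf{D}^{\leq 0}(\overline{T})$ to $\pf{D}^{\geq 1}(\overline{T})$, the problem reduces to
\[
\Hom{\perv{\overline{T}}}{\pcoh{-1}{\imath_T^*\mathcal{A}}}{\mathcal{B}_0}=0=\Hom{\perv{\overline{T}}}{\mathcal{B}_0}{\pcoh{1}{\imath_T^!\mathcal{A}}}.
\]
As $\mathcal{B}_0$ is an intermediate extension from the open dense stratum $T\subset\overline{T}$ it has neither a non-zero subobject nor a non-zero quotient object supported on $\overline{T}-T$, so both Homs vanish provided $\jmath_T^*\pcoh{-1}{\imath_T^*\mathcal{A}}=0$ and $\jmath_T^*\pcoh{1}{\imath_T^!\mathcal{A}}=0$; since $\jmath_T^*$ is $t$-exact these equal $\pcoh{-1}{k_T^*\mathcal{A}}$ and $\pcoh{1}{k_T^!\mathcal{A}}$.

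It remains to compute these extreme stalk and costalk terms via the link $L$. By the support condition $k_T^*\mathcal{A}$ lies in degrees $\leq -\dim T/2-1$, and the cone formula for the intermediate extension $\mathcal{A}$ identifies $\mathcal{H}^{-\dim T/2-1}(k_T^*\mathcal{A})$ with the local system on $T$ whose stalk is $\ih{(\dim L-1)/2}{L;\mathcal{L}|_L}$, where $\mathcal{L}|_L$ denotes the restriction of $\mathcal{L}$ to the open dense stratum of $L$; dually, using $D\mathcal{A}\cong{\jmath_S}_{!*}(\mathcal{L}^\vee\otimes or_S)[\dim S/2]$, the complex $k_T^!\mathcal{A}$ lies in degrees $\geq -\dim T/2+1$ and $\mathcal{H}^{-\dim T/2+1}(k_T^!\mathcal{A})$ is the dual of the local system with stalk $\ih{(\dim L-1)/2}{L;(\mathcal{L}^\vee\otimes or_S)|_L}$. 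Both coefficient systems $\mathcal{L}|_L$ and $(\mathcal{L}^\vee\otimes or_S)|_L$ arise by restriction of a local system on $S$, so the hypothesis makes these intersection cohomology groups vanish; hence $\pcoh{-1}{k_T^*\mathcal{A}}=0=\pcoh{1}{k_T^!\mathcal{A}}$ (the perverse $t$-structure on the manifold $T$ being the standard one shifted by $\dim T/2$), and this also explains why the weaker hypothesis in the final sentence of the statement already suffices. The main obstacle is this last step: carrying out the cone/link computation with the right normalisation, so that the extreme degrees occur exactly one step past the perverse degree and the relevant intersection cohomology sits in degree precisely $(\dim L-1)/2$, and handling the case in which $L$ is itself singular, where $\ih{\bullet}{L;-}$ must be interpreted as hypercohomology of the corresponding twisted intersection cohomology complex on $L$.
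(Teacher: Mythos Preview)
Your proof is correct and follows essentially the same route as the paper's: adjunction to $\overline{T}$, perverse truncation to isolate $\pcoh{-1}{\imath_T^*\mathcal{A}}$, the fact that an intermediate extension has no sub- or quotient objects supported on the boundary, and finally the cone/link identification of the critical stalk with $\ih{(\dim L-1)/2}{L;\mathcal{L}|_L}$. The only organisational difference is that the paper handles just $\ext{1}{\mathcal{A}}{\mathcal{B}}$ and then invokes Verdier duality $\ext{1}{\mathcal{B}}{\mathcal{A}}\cong\ext{1}{D\mathcal{A}}{D\mathcal{B}}$ to get the other vanishing for free, whereas you run the dual argument with $\imath_T^!\mathcal{A}$ explicitly; both are fine, and your observation that $(\mathcal{L}^\vee\otimes or_S)|_L$ is again the restriction of a local system on $S$ is exactly what is needed to make the weaker hypothesis suffice in your version.
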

\begin{proof}
It suffices to prove that $\ext{1}{\mathcal{A}}{\mathcal{B}} = 0$ for any such $\mathcal{A}$ and $\mathcal{B}$, since then by duality $\ext{1}{\mathcal{B}}{\mathcal{A}}\cong \ext{1}{D\mathcal{A}}{D\mathcal{B}} =0$. By adjunction and the fact that $\mathcal{B} \in \pf{D}^{0}(\overline{T})$ and ${\imath_T}^*\mathcal{A} \in \pf{D}^{< 0}(\overline{T})$ we have
\[
\ext{1}{\mathcal{A}}{\mathcal{B}} \cong \ext{1}{{\imath_T}^*\mathcal{A}}{\mathcal{B}} \cong \mor{H^{-1}({\imath_T}^*\mathcal{A})}{\mathcal{B}}
\]
where $H^{-1}$ is cohomology with respect to the standard, not the perverse, $t$-structure. Since $\mathcal{B}$ has no subobjects supported on $\overline{T}-T$ the right hand group vanishes, for any such $\mathcal{B}$, if $H^{-1}(\imath_T^*\mathcal{A})$ is supported on $\overline{T}-T$. This is equivalent to the vanishing of the stalk of $H^{-1}(\imath_T^*\mathcal{A})$ at some, hence at all, $x\in T$. This stalk is $\ih{(\dim L-1)/2}{L;\mathcal{L}|_L}$. The result follows.
\end{proof}
The conditions of this lemma are satisfied if, for instance, $X$ is Whitney stratified, all strata have smooth closures --- so that all links of pairs of strata are spheres --- and all such links have dimension $\geq 3$. In particular it holds for subspace arrangements where the dimension of pairs of subspaces differ by at least $3$.
\begin{corollary}
\label{int ext exact}
Suppose that for each pair of strata $S > T$ in $X$ and local system $\mathcal{L}$ on the link $L$ of $T$ in $\overline{S}$  the intersection cohomology group  $\ih{(\dim L-1)/2}{L;\mathcal{L}}=0$. Let $\jmath \colon Y \hookrightarrow \overline{Y}$ be the inclusion of a locally-closed union of strata $Y$ in its closure. Then the intermediate extension $\jmath_{!*} \colon \perv{Y} \to \perv{\overline{Y}}$ is exact. In fact it suffices for the above intersection cohomology group to vanish only for those local systems $\mathcal{L}$ which arise as the restriction of a local system on $S$.
\end{corollary}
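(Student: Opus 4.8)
The plan is to deduce the corollary from Lemma~\ref{local exact condition} by a dévissage, after first reducing to the case where $\overline{Y}\setminus Y$ is a single stratum. Since $Y$ is open in $\overline{Y}$ and $\overline{Y}$ is closed in $X$, the set $\overline{Y}\setminus Y$ is closed in $X$; hence a stratum $T$ of $\overline{Y}\setminus Y$ that is minimal in the order $S\leq S'\iff S\subseteq\overline{S'}$ satisfies $\overline{T}=T$, so it is closed in $\overline{Y}$. Iterating this observation one factors $\jmath$ as a chain of open inclusions
\[
Y = Y_m \hookrightarrow Y_{m-1}\hookrightarrow\cdots\hookrightarrow Y_0=\overline{Y},
\]
where each $Y_{k+1}\hookrightarrow Y_k$ is the complement of a single stratum closed in $Y_k$. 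By \cite[2.1.7.1]{bbd} the intermediate extension $\jmath_{!*}$ is the composite of the intermediate extensions of the $Y_{k+1}\hookrightarrow Y_k$, and a composite of exact functors is exact, so it suffices to treat each step. Thus I may assume $\overline{Y}\setminus Y = T$ is a single stratum, closed in $\overline{Y}$, with inclusion $\imath\colon T\hookrightarrow\overline{Y}$.

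The heart of the argument is the following claim $(\star)$: for every $\mathcal{A}\in\perv{Y}$, the perverse sheaf $\jmath_{!*}\mathcal{A}$ has no composition factor supported on $T$. Granting $(\star)$, exactness follows quickly: for a short exact sequence $0\to\mathcal{A}\to\mathcal{B}\to\mathcal{C}\to0$ in $\perv{Y}$, intermediate extension preserves injections and surjections, so $\jmath_{!*}\mathcal{A}\mono\jmath_{!*}\mathcal{B}\epi\jmath_{!*}\mathcal{C}$; the homology $\mathcal{Q}$ in the middle satisfies $\jmath^*\mathcal{Q}=0$ (as $\jmath^*$ is exact and $\jmath^*\jmath_{!*}=\id$), hence is supported on $T$, and it is a subquotient of $\jmath_{!*}\mathcal{B}$, so by $(\star)$ it has no composition factor at all, i.e. $\mathcal{Q}=0$.

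I would prove $(\star)$ by induction on the length of $\mathcal{A}$. If $\mathcal{A}$ is simple then $\jmath_{!*}\mathcal{A}$ is simple with $\jmath^*\jmath_{!*}\mathcal{A}=\mathcal{A}\neq0$, so it is not supported on $T$. For the inductive step choose $0\to\mathcal{A}'\to\mathcal{A}\to s\to0$ with $s$ simple. By the inductive hypothesis $(\star)$ holds for $\mathcal{A}'$ and for $s$; moreover, running the argument of the previous paragraph over a composition series of $\mathcal{A}'$ shows that $\jmath_{!*}$ is exact on short exact sequences whose middle term is shorter than $\mathcal{A}$, so $\jmath_{!*}\mathcal{A}'$ carries a filtration whose subquotients are the twisted intersection cohomology complexes $\jmath_{!*}s_j=\ic{\overline{S_j};\mathcal{L}_j}$, where the $s_j$ are the composition factors of $\mathcal{A}'$, each the intermediate extension of a shifted simple local system $\mathcal{L}_j$ on a stratum $S_j\subseteq Y$ (use the composition formula). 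Now set $\mathcal{Q}=\coker\bigl(\jmath_{!*}\mathcal{A}'\mono\ker(\jmath_{!*}\mathcal{A}\epi\jmath_{!*}s)\bigr)$; then $\jmath^*\mathcal{Q}=0$, so $\mathcal{Q}$ is supported on $T$, and the composition factors of $\jmath_{!*}\mathcal{A}$ supported on $T$ are precisely those of $\mathcal{Q}$, so it suffices to prove $\mathcal{Q}=0$. If $\mathcal{Q}\neq0$ it has a simple subobject $t'=\imath_*\mathcal{M}'$ supported on $T$; since $\jmath_{!*}\mathcal{A}$ has no non-zero subobject supported on $T$ \cite[Corollaire 1.4.25]{bbd}, the preimage of $t'$ in $\ker(\jmath_{!*}\mathcal{A}\epi\jmath_{!*}s)$ is a non-split extension of $t'$ by $\jmath_{!*}\mathcal{A}'$, hence a non-zero class in $\ext{1}{t'}{\jmath_{!*}\mathcal{A}'}$. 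But dévissage along the filtration above reduces the vanishing of this group to $\ext{1}{t'}{\ic{\overline{S_j};\mathcal{L}_j}}=0$ for every $j$: when $T\not\subseteq\overline{S_j}$ the closed supports $T$ and $\overline{S_j}$ are disjoint, and when $T\subseteq\overline{S_j}$ this is Lemma~\ref{local exact condition} in its dual form $\ext{1}{\mathcal{B}}{\mathcal{A}}=0$, which only requires the vanishing of $\ih{(\dim L-1)/2}{L;\mathcal{L}_j^\vee|_L}$ — and $\mathcal{L}_j^\vee$ is pulled back from $S_j$, which accounts for the final sentence of the corollary. This contradiction forces $\mathcal{Q}=0$, completing the induction.

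The main obstacle is the interlocking of the two inductive assertions in the dévissage: one needs ``$\jmath_{!*}$ exact on shorter sequences'' precisely in order to know that $\jmath_{!*}\mathcal{A}'$ is filtered by twisted intersection cohomology complexes — the only objects to which Lemma~\ref{local exact condition} directly applies — while that exactness is itself a consequence of $(\star)$ for shorter objects. Arranging the induction so that each ingredient is available when needed, rather than attempting a single global appeal to the lemma, is the crux; the rest is routine diagram-chasing with the standard properties (preservation of mono-/epimorphisms, absence of sub- and quotient objects on the complement, the composition formula) of intermediate extensions.
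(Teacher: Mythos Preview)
Your proof is correct, but it takes a longer route than the paper's. Both arguments establish the same key claim --- that $\jmath_{!*}\mathcal{A}$ has no composition factor supported on $\overline{Y}-Y$ --- and deduce exactness from it exactly as you do. The difference is in how that claim is proved.

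The paper works directly with a composition series of $\jmath_{!*}\mathcal{A}$ itself. Every simple object of $\perv{\overline{Y}}$ is either an intersection cohomology sheaf coming from a stratum of $Y$ or one supported on $\overline{Y}-Y$. If some factor of $\jmath_{!*}\mathcal{A}$ is of the second kind, take the \emph{lowest} such factor $t$; the subobject $\mathcal{F}$ immediately below it then has, automatically and without any induction, a filtration whose subquotients are intersection cohomology sheaves ${\jmath_{S_j}}_{!*}\mathcal{L}_j$ with $S_j\subset Y$. Lemma~\ref{local exact condition} (together with the easy disjoint-support case) gives $\ext{1}{t}{{\jmath_{S_j}}_{!*}\mathcal{L}_j}=0$, so the extension of $t$ by $\mathcal{F}$ splits and $t$ is a subobject of $\jmath_{!*}\mathcal{A}$ supported on $\overline{Y}-Y$, contradicting \cite[Corollaire 1.4.25]{bbd}.

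This sidesteps precisely what you identify as the ``main obstacle'': you induct on the length of $\mathcal{A}$ in order to know that $\jmath_{!*}\mathcal{A}'$ is filtered by intersection cohomology sheaves, interleaving $(\star)$ with partial exactness. The paper obtains such a filtration for free by choosing the lowest bad factor in a composition series of $\jmath_{!*}\mathcal{A}$, so no interlocking induction is needed and no preliminary reduction to a single boundary stratum is required. Your reduction does buy something --- it makes the $T\not\subset\overline{S_j}$ case transparently a matter of disjoint supports --- but that case is also easily handled in the general setting, since then $T\cap\overline{S_j}=\emptyset$ as $T$ is a single stratum.
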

\begin{proof}
The intermediate extension is exact if, and only if, for each $\mathcal{A} \in \perv{Y}$ the composition series of $\jmath_{!*}\mathcal{A}$ has no factors supported on $\overline{Y}-Y$. It is well-known that the intermediate extension has no non-zero subobjects (or quotients) supported on $\overline{Y}-Y$. However, Lemma \ref{local exact condition} implies that any factor supported on $\overline{Y}-Y$ would appear as a factor of a subobject supported on $\overline{Y}-Y$. Hence there are no non-zero such factors and the intermediate extension is exact.
\end{proof}

\begin{corollary}
\label{cs formula conditions}
Suppose that either
\begin{enumerate}
\item each stratum $S_k$ has finite fundamental group, or
\item $\ih{(\dim L -1)/2}{L;\mathcal{L}}=0$ for each link $L$ and each local system $\mathcal{L}$ on an open stratum of the link.
\end{enumerate}
Then (\ref{cs1}) is the canonical decomposition  of the class $[\beta]$. Moreover, under the second condition, (\ref{cs1}) can be more simply written as (\ref{cs2 order independent}).
\end{corollary}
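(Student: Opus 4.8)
The plan is to reduce both assertions to the exactness of the intermediate extensions occurring in (\ref{cs1}), with Corollary~\ref{exact extension cor} as the main engine. First I would record that, because the chosen ordering extends the closure partial order, each $S_k\cup\cdots\cup S_n$ is a closed union of strata, $S_k$ is open in it, and the closure of $S_k$ there is $\overline{S_k}$; hence ${\jmath_k}_{!*}$ differs from ${\jmath_{S_k}}_{!*}$ only by the exact pushforward along the closed inclusion $\overline{S_k}\hookrightarrow S_k\cup\cdots\cup S_n$. Under hypothesis (1), ${\jmath_{S_k}}_{!*}$ is exact by Lemma~\ref{global exact condition}, so ${\jmath_k}_{!*}$ is exact; under hypothesis (2) the exactness of ${\jmath_k}_{!*}$ follows directly from Corollary~\ref{int ext exact}.

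Next, since $\perv{X}$ and each $\perv{S_k\cup\cdots\cup S_n}$ are noetherian, I would apply Corollary~\ref{exact extension cor} to the gluing
\[
\perv{S_{k+1}\cup\cdots\cup S_n} \stackrel{{\imath_k}_*}{\longrightarrow} \perv{S_k\cup\cdots\cup S_n} \stackrel{\jmath_k^*}{\longrightarrow} \perv{S_k},
\]
whose intermediate extension ${\jmath_k}_{!*}$ is now known to be exact. At the $k$-th stage this identifies the $\wgc{}{\perv{S_k}}$-summand of $[\imath_{k-1}^{!*}\cdots\imath_1^{!*}\beta]$ with $[{\jmath_k}_{!*}\jmath_k^*\imath_{k-1}^{!*}\cdots\imath_1^{!*}\beta]$ and its complement with $[\imath_k^{!*}\cdots\imath_1^{!*}\beta]$. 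Iterating over $k=1,\ldots,n$, and recalling that the canonical decomposition of Corollary~\ref{witt decomposition theorem} is by construction obtained by iterating Proposition~\ref{direct sum decomp} in exactly this fashion (with extension by zero along closed inclusions suppressed), I would conclude that (\ref{cs1}) \emph{is} the canonical decomposition; in particular each of its terms then depends only on the Witt class $[\beta]$, and not on the ordering of the strata.

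For the final clause I would argue as follows. Under hypothesis (2), Corollary~\ref{int ext exact} also gives exactness of the intermediate extension along the open inclusion $X\setminus\overline{S}\hookrightarrow X$ for every stratum $S$ (factor it through the inclusion of $X\setminus\overline{S}$ into its closure, followed by an exact closed pushforward). Hence Corollary~\ref{exact extension cor}, applied first to the pair $(\imath_S, X\setminus\overline{S}\hookrightarrow X)$ and then to $\jmath_S$, shows that $\imath_S^{!*}$ descends to a homomorphism of Witt groups and that $[{\jmath_S}_{!*}\jmath_S^*\imath_S^{!*}\beta]$ is the $\wgc{}{\perv{S}}$-summand of the canonical decomposition of $[\beta]$, so in particular it depends only on $[\beta]$. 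I would then evaluate on an anisotropic representative $\beta_n$ of $[\beta]$ --- for instance the one produced by the inductive procedure preceding (\ref{direct sum formula}) --- and invoke Proposition~\ref{anisotropic formula}, which for anisotropic forms matches the summands of (\ref{cs1}) with those of (\ref{cs2 order independent}) term by term. Since both families of terms are representative-independent, it follows that (\ref{cs1}) and (\ref{cs2 order independent}) coincide.

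The hard part will not be any single computation but the bookkeeping that makes the iterated use of Corollary~\ref{exact extension cor} literally reproduce the canonical decomposition of Corollary~\ref{witt decomposition theorem}: one has to keep track, at each stage, of which summand is the open-stratum part and which is the closed part, and of the suppressed pushforwards along closed inclusions. Closely tied to this is the representative-independence of the individual terms --- in particular of $[\imath_S^{!*}\beta]$ --- which is precisely where the exactness of the intermediate extensions, and hence the two hypotheses, is indispensable.
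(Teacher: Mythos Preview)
Your proposal is correct and follows essentially the same route as the paper: both hypotheses yield exactness of the relevant ${\jmath_k}_{!*}$ (via Lemma~\ref{global exact condition} or Corollary~\ref{int ext exact}), after which iterated application of Corollary~\ref{exact extension cor} identifies (\ref{cs1}) with the canonical decomposition; for the second clause both you and the paper pass to an anisotropic representative, invoke Proposition~\ref{anisotropic formula}, and use exactness of the intermediate extensions along $X\setminus\overline{S}\hookrightarrow X$ and $\jmath_S$ to transfer representative-independence of the individual terms. The only cosmetic difference is that the paper computes $[\imath_S^{!*}\beta']=[\imath_S^{!*}\beta]$ directly from the splitting $[\beta]-\jmath_{!*}\jmath^*[\beta]$, whereas you phrase the same step as ``$\imath_S^{!*}$ descends to Witt groups'' via Corollary~\ref{exact extension cor}; your explicit factorisation of $\jmath_k$ through $\jmath_{S_k}$ and a closed inclusion is a helpful detail the paper leaves implicit.
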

\begin{proof}
By Lemma \ref{global exact condition} and Corollary (\ref{int ext exact}) either one of the conditions implies that each ${\jmath_k}_{!*}$ is exact. Then Corollary \ref{exact extension cor} implies that (\ref{cs1}) is the canonical decomposition, and hence independent of the choice of representative $\beta$.

Now suppose the second condition holds, so that the intermediate extension along the inclusion of {\em any} locally-closed union of strata is exact.  Choosing an anisotropic representative $\beta'$ Proposition \ref{anisotropic formula} implies that 
\[
[\beta] = [\beta'] =  \sum_{S \subset X} [ {\jmath_S}_{!*}\jmath_S^* \imath_S^{!*} \beta' ].
\]
Then $[ {\jmath_S}_{!*}\jmath_S^* \imath_S^{!*} \beta' ] = {\jmath_S}_{!*}\jmath_S^*[  \imath_S^{!*} \beta' ]$ because ${\jmath_S}_{!*}$ is exact, and
\[
[  \imath_S^{!*} \beta' ] = [\beta'] - \jmath_{!*}\jmath^*[\beta'] = [\beta] -  \jmath_{!*}\jmath^*[\beta] = [\imath_S^{!*}\beta]
\]
where $\jmath \colon X-\overline{S} \hookrightarrow X$ because $\jmath_{!*}$ is exact. The result follows.
\end{proof}
The conditions of this corollary are strong, but strong conditions are necessary. Example \ref{cs2 failure} in the next section shows that (\ref{cs2 order independent}) need not hold even when all strata are simply-connected.

\subsection{Perverse sheaves on rank stratifications}
\label{rankstrat}

In this section we use quiver descriptions of perverse sheaves on rank stratifications to illustrate some of our results. The main reference is \cite{braden1999}. 

We begin with the simplest non-trivial example, in which already one sees that one needs to take care in describing how Verdier duality translates to quiver descriptions of perverse sheaves. Let $X= \C$ stratified by the origin and its complement, and let $\jmath \colon \C-\{0\} \hookrightarrow  \C \hookleftarrow \{0\} \colon \imath$ be the inclusions. Then the category  $\perv{X}$ of perverse sheaves of vector spaces over a field $\mathbb{F}$ of characteristic zero is equivalent to the category of representations of the quiver
\begin{equation}
\label{C quiver}
\begin{tikzcd}
 0  \ar[bend left=15]{r}{c}    &   \ar[bend left=15]{l}{v}  1
\end{tikzcd}
\end{equation}
where $1+cv$ and $1+vc$ are invertible \cite[\S4]{MR804054}. Here the vertex $0$ corresponds to the perverse nearby cycles $\Psi_z$ and $1$ to the perverse vanishing cycles $\Phi_z$, where $z$ is a coordinate on $\C$. The arrow $c$ corresponds to the canonical map and $v$ to a variation map. In order to define the latter one needs to pick an orientation of { $\C^*$}, or equivalently a generator of the fundamental group of $\C^*$. The restriction of a perverse sheaf $\sh{E}$ to $\C^*$ is a shifted local system $\sh{L}[1]$, whose stalk $\sh{L}_1$ at the basepoint $1\in \C^*$ is the perverse nearby cycles, and whose monodromy with respect to the chosen generator is $\mu = 1+vc$. The restriction of the Verdier dual $D\sh{E}$ to $\C^*$ is the dual shifted local system $\sh{L}^\vee[1]$ whose monodromy with respect to the {\em reversed} generator is $\mu^*=1+c^*v^*$, where $*$ denotes the vector space dual. 

In order to give a simple description of duality, sufficient for the following examples, we restrict to the Serre subcategory of perverse sheaves with { unipotent} monodromy, \ie for which both $n_\Phi=cv$ and $n_\Psi=vc$ are nilpotent. This allows us to switch to an alternative description in terms of the logarithms $N_\Phi$ and $N_\Psi$ of the monodromies, by which we mean
\[
1+n =e^{N}
\]
in each case. We do so by replacing the variation arrow $v$ by $V = f(n_\Psi)v = vf(n_\Phi)$ where 
\begin{equation}
\label{log power series}
f(t) = \frac{\ln(1+t)}{t} = 1-\frac{t}{2}+\cdots.
\end{equation}
Verdier duality for { unipotent} perverse sheaves then corresponds to the duality 
\[
\begin{tikzcd}
 W_0  \ar[bend left=15]{r}{c}    &   \ar[bend left=15]{l}{V}  W_1
\end{tikzcd}
\quad
\longmapsto 
\quad
\begin{tikzcd}
 W_0^*  \ar[bend left=15]{r}{-V^*}    &   \ar[bend left=15]{l}{c^*}  W_1^*
\end{tikzcd}
\]
on quiver descriptions. Here we first switch to the reversed generator of the fundamental group by changing $V$ to $-V$, and then dualise. This fits with the duality of local systems, in which if $\sh{L}$ has { unipotent} monodromy $e^N$ then $\sh{L}^\vee$ has monodromy $e^{-N^*}$ with respect to the {\em same} generator of the fundamental group. If $\mathbb{F} \subset \C$ then under the Riemann--Hilbert correspondence this agrees (up to a Tate twist) with the description in terms of regular holonomic $D$-modules 
as given in \cite[Theorem 1.6,  Remark 1.7, and Theorem 2.2]{MR1045997}. Note that the usual biduality isomorphism $\chi: id\to D^2$ 
for quiver representations needs to be modified by a sign
at the vertex $0$, so that a symmetric bilinear form on a perverse sheaf corresponds to an (anti-)symmetric bilinear form at the vertex $1$
(respectively vertex $0$).

\begin{example}
\label{ie not exact 1}
Even in this simple example, intermediate extension does not induce a map of Witt groups. Let $\beta \colon \mathcal{B} \to D\mathcal{B}$ be the non-degenerate symmetric form
\[
\left(
\begin{array}{cc}
1 & 1 \\
0 & 1
\end{array}
\right)
\begin{tikzcd}
\F^2 
\ar[loop left]{l}
\ar{r}{\beta}
&
\F^2 \arrow[loop right]{r}
\end{tikzcd}
\left(
\begin{array}{cc}
1 & 0 \\
1 & 1
\end{array}
\right),
\qquad
\beta=\left(
\begin{array}{cc}
0 & 1 \\
-1 & 0
\end{array}
\right)
\]
on a two-dimensional shifted local system on $\C-\{0\}$. Clearly $\beta$ is metabolic with lagrangian the one-dimensional local system $\mathcal{A}$ with trivial monodromy, in particular $[\beta]=0$. The intermediate extensions of $\mathcal{A}$ and of $\mathcal{B}$ are
\[
\begin{tikzcd}
\F \ar[bend left=15]{r} &  0  \ar[bend left=15]{l} & \quad & \F^2 \ar[bend left=15, pos=0.45]{r}{(0\,1)} &  \F  \ar[bend left=15, pos=0.55]{l}{(1\,0)^*}
\end{tikzcd}
\]
respectively.  The subobject $\jmath_{!*}\mathcal{A}$ is isotropic for $\jmath_{!*}\beta$ but no longer lagrangian; the reduction $\ir{\jmath_{!*}\beta}{\jmath_{!*}\mathcal{A}}$ is the form $[1]$ on the point $0$. Applying (\ref{cs1}) to $\jmath_{!*}\beta$ we do not obtain the direct sum decomposition (\ref{witt group sum}) into forms on simple local systems. 

The intermediate extension $\jmath_{!*}$ is not exact, and moreover $[\jmath_{!*}\beta]\neq 0$ since any form Witt-equivalent to $\ir{\jmath_{!*}\beta}{\jmath_{!*}\mathcal{A}}$ must be a form on an object with an odd number of simple factors. Hence the intermediate extension does not induce a map of Witt groups. 
\end{example}

The above description can be generalised to perverse sheaves on a complex line bundle $L$ over a connected stratified space $X$. Stratify $L$ by the preimages of the strata of $X$ intersected with the zero section and its complement. Identify $X$ with the zero section, and let $\imath \colon X \hookrightarrow L$ be the inclusion, and $\jmath \colon L-X \hookrightarrow L$ its complement. A perverse sheaf with respect to this stratification is automatically \defn{monodromic} in the sense of \cite{MR804055} in that it is locally constant on the $\C^*$ fibres of the projection $L-X \to X$. The monodromy of such a perverse sheaf is an automorphism determined by a choice of orientation of $L$, equivalently of a generator of the fundamental group of $\C^*$. Perverse sheaves on $L$ are equivalent to representations of the quiver (\ref{C quiver}) but with values in the abelian category $\perv{L-X}$ rather than in vector spaces --- see \cite[Proposition 5.5]{MR804055}. When $L$ is a trivial line bundle this description corresponds to the one via perverse nearby and vanishing cycles for the projection $L \cong X\times \C \to \C$. The initial example considered above is the special one in which $X$ is a point.

A perverse sheaf $\sh{E}$ on $L-X$ splits as a direct sum $\sh{E}^u\oplus\sh{E}^{nu}$ of perverse sheaves with respectively unipotent and non-unipotent monodromy. For the non-unipotent summand
\[
{}^p\jmath_!\sh{E}^{nu} \cong {}^p\jmath_{!*}\sh{E}^{nu} \cong 
{}^p\jmath_*\sh{E}^{nu} 
\]
so that intermediate extension is exact on the full Serre subcategory of perverse sheaves with non-unipotent monodromy. For this reason we focus on the unipotent part. For this part Verdier duality can be described just as above, once a generator for the fundamental group of the $\C^*$ fibres is chosen, and the variation map is renormalised as before. 

Now we consider perverse sheaves on the rank stratification as in \cite{braden1999}. Let $V$ be an $n$-dimensional complex vector space. Stratify $\mathrm{End}(V)$ by the subspaces of endomorphisms of equal rank --- for $k=0,\ldots,n$ let
\[
S_k =\{x \in \mathrm{End}(V) \mid \mathrm{rank}(x) = n-k\}.
\]
Then $\codim S_k = k^2$ and each $S_k$ is connected with $\pi_1 S_0 \cong \Z$ and all other strata simply-connected. For $n>0$ \cite[Theorem 4.6]{braden1999} implies that the category $\perv{X_n}$ of perverse sheaves on the hypersurface 
\[
X_n=\{ x\in \mathrm{End}(V) \mid \mathrm{rank}(x) < n\} = S_1 \cup\cdots\cup S_n
\]
 of singular endomorphisms, constructible with respect to this stratification and  with coefficients in $\F$, is equivalent to the category of finite-dimensional $\F$-representations of the quiver with relations 
\[
\mathbb{A}_n = \left(
\left.
\begin{tikzcd}
 1  \ar[bend left=15, pos=0.6]{r}{c_1}    &   \ar[bend left=15, pos=0.4]{l}{v_1} \cdots  \ar[bend left=15]{r}{c_{n-1}}& n \ar[bend left=15]{l}{v_{n-1}} 
\end{tikzcd}
\right |
\begin{array}{l}
v_1c_1=0\\
c_kv_k = v_{k+1}c_{k+1} \ \text{for}\ k=1,\ldots,  n-2
\end{array}
\right).
\]
We write $\mu_k = 1+ v_kc_k$ for $k=1,\ldots, n-1$ and $\mu_n=1+c_{n-1}v_{n-1}$, and refer to these as the monodromies of the representation. By the conditions above each monodromy is { unipotent}, with $\mu_1=1$. For $n=1$ the quiver $\mathbb{A}_1$ corresponding to $\perv{\pt}$ has just one vertex and no arrows.

This equivalence between perverse sheaves and quiver representations is obtained in two steps. First one maps a perverse sheaf to its stratified Morse data, a vector space associated to each stratum $S_i$ together with the (micro-local) monodromy $\mu_i$, see \cite[Theorem 4.6 and Proposition 4.7]{braden1999}. This monodromy depends on a choice of generator of the micro-local fundamental group, which in each of these cases is infinite cyclic. The theory of micro-local perverse sheaves is then used in order to obtain the arrows in the quiver description, ultimately by reducing  to considering monodromic perverse sheaves on line bundles, see \cite[Proposition 2.8]{braden1999}. Since all micro-local monodromies are { unipotent}, we can renormalise the variation arrows as above, thereby modifying the identification of perverse sheaves with representations of the same quiver with relations $\mathbb{A}_n$.
Now the micro-local monodromies of the  perverse sheaves correspond to the following monodromies of the representation:
$\mu_k = e^{N_k}$ with $N_k= V_kc_k$ for $k=1,\ldots, n-1$ and $\mu_n=e^{N_n}$ with $N_n=c_{n-1}V_{n-1}$. 

For the remainder of this section we work with this modified identification. In this, Verdier duality on $\perv{X_n}$ corresponds to the functor mapping a representation
\[
\mathcal{A} = \left( 
\begin{tikzcd}
 A_1  \ar[bend left=15]{r}{c_1}   &   \ar[bend left=15]{l}{V_1} \cdots  \ar[bend left=15]{r}{c_{n-1}}& A_n \ar[bend left=15]{l}{V_{n-1}} 
\end{tikzcd}
\right)
\]
to the `dual' representation
\[
\mathcal{A}^*=\left(
\begin{tikzcd}
 A_1^*  \ar[bend left=15]{r}{-V_1^*}  &   \ar[bend left=15]{l}{c_1^*} \cdots  \ar[bend left=15]{r}{-V_{n-1}^*}& A_n^* \ar[bend left=15]{l}{c_{n-1}^*} 
\end{tikzcd}
\right).
\]
The usual biduality isomorphism $\chi: \id\to D^2$  for quiver representations needs to be modified by a sign $(-1)^{n^2-k^2}=(-1)^{n-k}$ at the vertex $k$, as for quivers with an involution \cite[\S3.2]{Young}, since the vector space associated to each stratum $S_k$  is given  as a normal Morse datum  shifted by the complex dimension $n^2-k^2$ of the stratum --- see  \cite[Corollary 5.1.4]{MR2031639}. Then a symmetric bilinear form on a perverse sheaf corresponds to a symmetric bilinear form at the vertex $k$ when $n-k$ is even and an anti-symmetric form when $n-k$ is odd. 

This differs from the description of duality in \cite[Proposition 4.8]{braden1999}, where the reversal of the generator of the fundamental  group is overlooked (as one can check in the $n=1$ case, which is the example considered at the beginning of this section). A further difference is that \cite{braden1999} state their results for perverse sheaves of complex vector spaces, however their methods apply more generally to perverse sheaves of vector spaces over any field $\mathbb{F}$. For the theory of micro-local perverse sheaves in this generality see \cite{MR2081221}. We must restrict to fields of characteristic zero in order to take logarithms of { unipotent} monodromies. 

We now explain how to understand intermediate extension and restriction in the quiver description of $\perv{X_n}$.
\begin{lemma}
\label{subcategories of perverse sheaves}
Under the above identification the diagram
\[
\begin{tikzcd}
\perv{S_k\cup \cdots \cup S_n} \ar{r}\ar{d}  & \perv{X_n} \ar{d}\\
\perv{S_k\cup \cdots \cup S_l} \ar{r}& \perv{S_1\cup \cdots \cup S_l}
\end{tikzcd}
\]
in which horizontal arrows are extensions by zero from closed unions of strata and vertical ones restriction to open unions, corresponds to the diagram
\[
\begin{tikzcd}
\langle \mathcal{A} \in \rep{ \mathbb{A}_n } \mid A_i=0 \ \text{for}\ i=1, \ldots k-1\rangle \ar{r}\ar{d}& \rep{\mathbb{A}_n}\ar{d} \\
\langle \mathcal{A} \in \rep{ \mathbb{A}_l } \mid A_i=0 \ \text{for}\ i=1, \ldots k-1\rangle \ar{r}& \rep{\mathbb{A}_l}.
\end{tikzcd}
\]
for $0<k\leq l \leq n$. Here the horizontal arrows are inclusions of full subcategories of quiver representations and the vertical ones arise from restricting a representation to the subquiver on vertices $1,\ldots, l$.
\end{lemma}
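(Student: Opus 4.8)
The plan is to verify the correspondence one square at a time, translating the well-known sheaf-theoretic characterisation of the relevant subcategories into conditions on the Morse data / quiver data. Everything reduces to understanding, for the rank stratification, what it means geometrically for a perverse sheaf to be the extension by zero from the closed union $S_k\cup\cdots\cup S_n$, and what it means for its restriction to the open union $S_1\cup\cdots\cup S_l$ to vanish on certain strata; both are \emph{support} conditions, and support in the quiver picture is detected by the vanishing of the vector spaces $A_i$ attached to the vertices, because the vertex $i$ records the (normal) stratified Morse datum along $S_i$.

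First I would recall that, under the equivalence of \cite[Theorem 4.6]{braden1999} (as renormalised above), a perverse sheaf $\mathcal A$ on $X_n$ has support contained in $\overline{S_i\cup\cdots\cup S_n}=S_i\cup\cdots\cup S_n$ if and only if $k_{S_j}^*\mathcal A=0$ for all $j<i$, and the latter is equivalent to $A_j=0$ for $j<i$: indeed the normal Morse datum along a stratum $S_j$ determines, and is determined by, the (co)stalk of $\mathcal A$ along $S_j$ up to the standard long exact sequences, so one of these vanishes precisely when the other does. This identifies the left-hand vertical categories and shows the top horizontal arrow of the target diagram is the correct image of extension by zero from $S_k\cup\cdots\cup S_n$. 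The bottom horizontal arrow is the same statement for the truncated quiver $\mathbb A_l$ corresponding to the space $S_1\cup\cdots\cup S_l$, which carries its own rank stratification with the same local description; here I would invoke \cite[Theorem 4.6]{braden1999} again for this smaller hypersurface.

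Next I would identify the vertical arrows. Restriction along the open inclusion $\jmath\colon S_1\cup\cdots\cup S_l\hookrightarrow X_n$ is $t$-exact for the perverse $t$-structures (it is $\jmath^*$ for an open union of strata), hence induces an exact functor on hearts, and on Morse data it simply forgets the data attached to the strata $S_{l+1},\ldots,S_n$; under the identification with quiver representations this is exactly the restriction functor $\rep{\mathbb A_n}\to\rep{\mathbb A_l}$ that forgets the vertices $l+1,\dots,n$ and the arrows between them, since the relations of $\mathbb A_n$ restrict to those of $\mathbb A_l$. Commutativity of the square is then automatic: both composites send a perverse sheaf supported on $S_k\cup\cdots\cup S_n$ to the quiver representation on vertices $1,\dots,l$ with $A_i=0$ for $i<k$, because forgetting the top vertices and forgetting the bottom vertices commute, and the support condition on the bottom $k-1$ vertices is preserved by either order of operations. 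One should also note $k\le l$ guarantees the inclusion target $\langle A_i=0,\ i<k\rangle\subset\rep{\mathbb A_l}$ makes sense.

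The main obstacle is not the diagram chase but making the first step precise: one must be careful that the \emph{renormalised} identification (with $V$ in place of $v$, taking logarithms of unipotent monodromies) still has the property that the vertex spaces $A_i$ detect support along $S_i$, and that this renormalisation — which only rescales the variation arrows by invertible power series in the micro-local monodromy — does not change which objects are extensions by zero from a closed union of strata. This is clear because renormalisation is an isomorphism of categories fixing the underlying vertex spaces, but it is the point that needs to be stated carefully; once it is granted, the rest is a routine unwinding of definitions, which I would present compactly rather than in full.
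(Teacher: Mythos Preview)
Your identification of extension by zero with the vanishing of the vertex spaces $A_1,\ldots,A_{k-1}$ is fine, and the overall shape of the argument is right. The gap is in your treatment of the vertical arrows. You write that $S_1\cup\cdots\cup S_l$ ``carries its own rank stratification with the same local description'' and that you would ``invoke \cite[Theorem~4.6]{braden1999} again for this smaller hypersurface.'' But this open subset of $X_n$ is \emph{not} a hypersurface $X_l$: it has dimension $n^2-1$, not $l^2-1$, and is not the locus of singular endomorphisms of any $l$-dimensional space. So Braden's Theorem~4.6 does not apply to it directly, and you have not actually established the equivalence $\perv{S_1\cup\cdots\cup S_l}\simeq\rep{\mathbb A_l}$, nor its compatibility with the equivalence on $X_n$.

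The paper supplies exactly this missing step via \cite[Proposition~4.8]{braden1999}: restriction of a perverse sheaf on $X_n$ to a normal slice to $S_l$ (shifted by the complex dimension of $S_l$) corresponds to restriction of the associated $\mathbb A_n$-representation to the subquiver on vertices $1,\ldots,l$. Since the normal slice to $S_l$ is itself a rank-stratified $X_l$, this gives the identification $\perv{\text{normal slice}}\simeq\rep{\mathbb A_l}$. One then observes that both open restriction to $S_1\cup\cdots\cup S_l$ and normal-slice restriction realise the same Serre quotient of $\perv{X_n}$, namely by the subcategory of sheaves with vanishing Morse data on $S_1,\ldots,S_l$; hence $\perv{S_1\cup\cdots\cup S_l}\simeq\rep{\mathbb A_l}$ compatibly. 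Your informal claim that ``restriction forgets the Morse data on $S_{l+1},\ldots,S_n$'' is morally this, but without the normal-slice input you have no independent description of the target category against which to check it.
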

\begin{proof}
By \cite[Proposition 4.8]{braden1999} the restriction of a perverse sheaf to a normal slice to the stratum $S_l$ (and shifted by the complex dimension of $S_l$)
corresponds under the equivalence to the restriction of a representation of $\mathbb{A}_n$ to the subquiver on the vertices $1,\ldots, l$. 
In particular perverse sheaves on a normal slice can be identified with perverse sheaves on $X_l$. This remains the same under our modified identification. Perverse sheaves on the union $S_1\cup\cdots \cup S_l$ can also be identified with those on a normal slice to $S_l$ --- both are naturally equivalent to the category obtained by quotienting $\perv{X_n}$ by the Serre subcategory of perverse sheaves with vanishing Morse data on the strata $S_1, \ldots, S_l$. These correspond to representations $\mathcal{A}$ of the quiver $\mathbb{A}_n$ with $A_i=0$ for $i=1,\ldots, l$. The latter subcategory of perverse sheaves is the image of the extension by zero along the closed inclusion $S_{l+1} \cup \cdots \cup S_n \hookrightarrow X$. The result follows. 
\end{proof}

\begin{lemma}
\label{rank strat functors}
Let $\imath \colon S_k\cup\cdots\cup S_l \hookrightarrow S_1\cup\cdots\cup S_l$ and $\jmath \colon S_k\cup\cdots\cup S_l \hookrightarrow S_k\cup\cdots\cup S_n$ be the inclusions, where $0 < k\leq l\leq n$. Identify the representation
\[
\mathcal{A} = \left( 
\begin{tikzcd}
A_1 \ar[bend left=15]{r}{c_1} & A_2  \ar[bend left=15]{r}{c_2}   \ar[bend left=15]{l}{V_1} &   \ar[bend left=15]{l}{V_2} \cdots  \ar[bend left=15]{r}{c_l}& A_l \ar[bend left=15]{l}{V_l} 
\end{tikzcd}
\right)
\]
with a perverse sheaf on $S_1\cup\cdots\cup S_l$, and the representation
\[
\mathcal{B} = \left( 
\begin{tikzcd}
B_k \ar[bend left=15, pos=0.6]{r}{c_{k+1}} & B_{k+1}  \ar[bend left=15]{r}{c_{k+2}}   \ar[bend left=15, pos=0.4]{l}{V_{k+1}} &   \ar[bend left=15]{l}{V_{k+2}} \cdots  \ar[bend left=15]{r}{c_l}& B_l \ar[bend left=15]{l}{V_l} 
\end{tikzcd}
\right)
\]
with a perverse sheaf on $S_k\cup\cdots\cup S_l$. Then
\begin{enumerate}
\item $\pf{\imath^!} \mathcal{A} =
\left(
\begin{tikzcd}
\ker V_k \ar[bend left]{r} &  \ar[bend left]{l}  \cdots  \ar[bend left]{r}& \ker (V_k \cdots V_l ) \ar[bend left]{l}
\end{tikzcd}
\right)
$
\item $\pf{\imath^*} \mathcal{A} =
\left(
\begin{tikzcd}
\coker c_k \ar[bend left]{r} &   \ar[bend left]{l} \cdots  \ar[bend left]{r}& \coker (c_l \cdots c_k ) \ar[bend left]{l}
\end{tikzcd}
\right)
$
\item $\pf{\jmath_!} \mathcal{B} =
\left(
\begin{tikzcd}
B_k \ar[bend left=15]{r} & \cdots  \ar[bend left=15]{r}  \ar[bend left=15]{l} & B_l  \ar[bend left=15]{l} \ar[bend left=15, equal]{r}  & \cdots \ar[bend left=15]{l}{N_l} \ar[bend left=15, equal]{r} & B_l \ar[bend left=15]{l}{N_l}
\end{tikzcd}
\right)
$
\item $\pf{\jmath_*} \mathcal{B} =
\left(
\begin{tikzcd}
B_k \ar[bend left=15]{r} & \cdots  \ar[bend left=15]{r}  \ar[bend left=15]{l} & B_l  \ar[bend left=15]{l} \ar[bend left=15]{r}{N_l} & \cdots \ar[bend left=15, equal]{l} \ar[bend left=15]{r}{N_l} & B_l \ar[bend left=15, equal]{l}
\end{tikzcd}
\right)
$
\end{enumerate}
In each case the unlabelled upper and lower arrows are naturally induced respectively from the $c_i$ and the $V_i$. The natural morphisms $\pf{\imath^!} \mathcal{A} \to \mathcal{A} \to \pf{\imath^*} \mathcal{A}$ are given respectively by the evident inclusions and quotients, and the natural morphism $\pf{\jmath_!}\mathcal{B} \to \pf{\jmath_*}\mathcal{B}$ by the identity maps. The intermediate restriction therefore has
\[
\left( \imath^{!*}\mathcal{A} \right)_{k+i} = \im \left( \ker  (V_k\cdots V_{k+i} )\to A_{k+i} \to \coker (c_{k+i} \cdots c_k) \right)
\]
for $i=0, \ldots, l-k$, and the intermediate extension $\jmath_{!*} \mathcal{A}$ is the representation
\[
\begin{tikzcd}
A_k \ar[bend left]{r} & \cdots  \ar[bend left]{r}  \ar[bend left]{l} & A_l  \ar[bend left]{l} \ar[bend left, pos=0.6]{r}{N_l} & \im( N_l) \ar[bend left, pos=0.4]{r}{N_l} \ar[bend left, hookrightarrow]{l} & \cdots \ar[bend left, hookrightarrow]{l} \ar[bend left, pos=0.6]{r}{N_l} & \im(N_l)^{n-l} \ar[bend left, hookrightarrow]{l}.
\end{tikzcd}
\]
\end{lemma}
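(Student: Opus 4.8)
The plan is to reduce the whole statement to an explicit computation with representations of $\mathbb{A}_n$, using Lemma \ref{subcategories of perverse sheaves} as the translation device. Under that translation $\imath_*$ (extension by zero from the closed union $S_k\cup\cdots\cup S_l$) becomes the inclusion of the full subcategory of $\mathbb{A}_l$-representations vanishing at the vertices $1,\dots,k-1$, and $\jmath^*$ becomes restriction of an $\mathbb{A}_n$-representation to the subquiver on the vertices $k,\dots,l$; the complementary open inclusion $\jmath'\colon S_1\cup\cdots\cup S_{k-1}\hookrightarrow S_1\cup\cdots\cup S_l$ and the complementary closed inclusion $\imath'\colon S_{l+1}\cup\cdots\cup S_n\hookrightarrow S_k\cup\cdots\cup S_n$ are described the same way. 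Since, as in the $\C$ example at the start of this section, Verdier duality corresponds to the dual representation together with the sign twist $(-1)^{n-k}$ in the biduality isomorphism, and since $D\,\pf{\imath^!}\,D\cong\pf{\imath^*}$ and $D\,\pf{\jmath_!}\,D\cong\pf{\jmath_*}$, it is enough to establish parts (1) and (3): parts (2) and (4) then follow by applying $D$, as does the fact that the natural map $\pf{\jmath_!}\to\pf{\jmath_*}$ is interchanged with itself under these identifications.

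For part (1) I would start from the gluing triangle $\imath_*\imath^!\to\id\to\jmath'_*(\jmath')^*$ of \S\ref{gluing and splitting}. Because $\imath_*$ is $t$-exact and $\jmath'_*$ is left $t$-exact, applying $\pf{H}^0$ identifies $\imath_*\pf{\imath^!}\mathcal{A}$ with the kernel of the unit $\mathcal{A}\to\pf{\jmath'_*}(\jmath')^*\mathcal{A}$, i.e.\ with the largest subrepresentation of $\mathcal{A}$ vanishing at the vertices below $k$. One builds this subrepresentation one vertex at a time: at the first relevant vertex the chosen subspace must be annihilated by the downward arrow leaving it (so that it maps into the zero space below), and at each subsequent vertex it must be carried by the downward arrow into the subspace already chosen, which forces it to be the kernel of the appropriate iterated composite of downward arrows. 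The relations $N_j=V_jc_j$ and $c_{j-1}V_{j-1}=V_jc_j$ in $\mathbb{A}_n$ are exactly what is needed to check that these subspaces are also stable under the upward arrows, so that the datum really is a subrepresentation; this proves (1), and (2) follows by applying $D$.

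For part (3) note first that $\pf{\jmath_!}$ is the left adjoint of $\jmath^*$ between hearts: since $R\jmath_*$ is right adjoint to $\jmath^*$ on derived categories and $R\jmath_*\mathcal{B}\in\pf{D}^{\geq0}$ for $\mathcal{B}$ perverse, the adjunction descends to hearts with $\pf{\jmath_*}$ as right adjoint and dually $\pf{\jmath_!}$ as left adjoint. Factoring $\jmath$ into open inclusions that each adjoin a single stratum, and using that $\pf{\jmath_!}$ of such a composite is the composite of the $\pf{\jmath_!}$'s, it suffices to treat the case $n=l+1$; this is precisely the situation of a monodromic perverse sheaf on a line bundle, for which the answer is the one already recorded in the $\C$ example above (compare \cite{braden1999,MR804055}). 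Concretely I would check directly in the quiver that the $\mathbb{A}_n$-representation obtained from $\mathcal{B}$ by declaring every new tail space to be $B_l$, every new upward arrow the identity and every new downward arrow $N_l$ is well defined (the tail relations reduce to $N_l\,\id=\id\,N_l$ and the single boundary relation to $V_lc_l=N_l$), and that it has the required universal property: a morphism $\mathcal{B}\to(\jmath')^*\mathcal{C}$ extends uniquely, its component at the $j$-th tail vertex being forced to equal $c^{\mathcal{C}}_{l+j-1}\cdots c^{\mathcal{C}}_{l}$ followed by the component at vertex $l$, compatibility with the downward arrows holding automatically because a morphism of representations intertwines the monodromy at vertex $l$. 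This gives (3), and (4) follows by duality.

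Finally, the formulas for $\imath^{!*}=\im(\pf{\imath^!}\to\pf{\imath^*})$ and $\jmath_{!*}=\im(\pf{\jmath_!}\to\pf{\jmath_*})$ are read off by identifying these natural maps vertex by vertex. For $\imath$ the composite $\pf{\imath^!}\mathcal{A}\to\mathcal{A}\to\pf{\imath^*}\mathcal{A}$ is the evident inclusion followed by the evident quotient, so at each vertex its image is the image of the iterated kernel from (1) inside the iterated cokernel from (2). For $\jmath$ the map $\pf{\jmath_!}\mathcal{B}\to\pf{\jmath_*}\mathcal{B}$ is the identity on the core $B_k,\dots,B_l$, so by commutativity with the tail arrows its component at the $j$-th tail vertex must be $N_l^{\,j}$, whence its image there is $\im(N_l^{\,j})$, with induced upward arrow the restriction of $N_l$ and induced downward arrow the inclusion $\im(N_l^{\,j+1})\hookrightarrow\im(N_l^{\,j})$. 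The delicate point is not any single computation but keeping the renormalised quiver of \cite{braden1999} --- the reversed generator of the micro-local fundamental group, the rescaled variation maps, and the sign twist in biduality --- consistent throughout, so that the two functors in each dual pair, and in particular the two natural maps whose images define $\imath^{!*}$ and $\jmath_{!*}$, are genuinely interchanged by $D$; this rests on the micro-local reduction to normal slices underlying Lemma \ref{subcategories of perverse sheaves}.
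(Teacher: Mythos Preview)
Your proposal is correct and follows essentially the same strategy as the paper: the paper's proof is a single sentence observing that $\pf{\imath^!}\mathcal{A}$ and $\pf{\imath^*}\mathcal{A}$ are the maximal subobject and quotient of $\mathcal{A}$ supported on $S_k\cup\cdots\cup S_l$, and that $\pf{\jmath_!}\mathcal{B}$ and $\pf{\jmath_*}\mathcal{B}$ are the initial and terminal extensions of $\mathcal{B}$. You derive these characterisations from the gluing triangles and adjunctions and then carry out the explicit quiver computations that the paper leaves to the reader. The only minor difference is that you deduce (2) and (4) from (1) and (3) via the duality $D\,\pf{\imath^!}D\cong\pf{\imath^*}$ and $D\,\pf{\jmath_!}D\cong\pf{\jmath_*}$, whereas the paper treats all four cases symmetrically via their own universal properties; your route is a legitimate shortcut but requires care with the sign conventions you flag at the end.
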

\begin{proof}
These results follow from the description of $\pf{\imath^!} \mathcal{A}$ and $\pf{\imath^*} \mathcal{A}$ as respectively the maximal subobject and quotient of $\mathcal{A}$ supported on $S_k\cup \cdots \cup S_l$,  and of $\pf{\jmath_!} \mathcal{B} $ and $\pf{\jmath_*} \mathcal{B} $ as respectively initial and terminal objects amongst all extensions of $\mathcal{B}$ to a perverse sheaf on $S_k\cup\cdots \cup S_n$.
\end{proof}

\begin{example}
\label{ie not exact 2}
Intermediate extension from a union of strata need not be an exact functor, even when all strata are simply connected. Consider the rank stratification for $n=3$, and specifically the inclusion $\jmath \colon S_1\cup S_2 \hookrightarrow S_1\cup S_2\cup S_3$. On the left below is a short exact sequence in $\perv{S_1\cup S_2}$, and on the right is the result of applying the intermediate extension $\jmath_{!*}$ to it:
\[
\begin{tikzcd}[row sep=large]
\F \ar[bend left=15]{r}{1} \ar{d}{1} &  \F\ar[bend left=15]{l}{0}  \ar{d}{(1\, 0)^*}
&&  
\F \ar[bend left=15]{r}{1}  \ar{d}{1}&  \F \ar[bend left=15]{l}{0}  \ar[bend left=15]{r} \ar{d}[pos=0.4]{(1\, 0)^*} & 0 \ar[bend left=15]{l} \ar{d}
\\
\F \ar[bend left=15, pos=0.55]{r}{(1\,0)^*} \ar{d} & \ar[bend left=15, pos=0.45]{l}{(0\,1)} \F^2 \ar{d}{ (0\, 1)}
&&
\F \ar[bend left=15, pos=0.55]{r}{(1\,0)^*} \ar{d} & \ar[bend left=15, pos=0.45]{l}{(0\,1)} \F^2 \ar[bend left=15, pos=0.45]{r}{(0\,1)} \ar{d}[pos=0.6]{ (0\, 1)}&  \F  \ar[bend left=15, pos=0.55]{l}{(1\,0)^*} \ar{d}
\\
0 \ar[bend left=15]{r} &  \F\ar[bend left=15]{l}
&&  
0  \ar[bend left=15]{r}& \F \ar[bend left=15]{r} \ar[bend left=15]{l}&  0\ar[bend left=15]{l}
\end{tikzcd}
\]
It is evident from the final column that the sequence on the right is no longer exact in the middle.
\end{example}
\begin{example}
\label{cs2 failure}
Let $\mathcal{B}$ be the perverse sheaf on $S_1\cup S_2\cup S_3$ in the middle row of the right hand diagram of  Example \ref{ie not exact 2} above, and let $\beta \colon \mathcal{B} \to D\mathcal{B}$ be the non-degenerate symmetric form 
\[
\begin{tikzcd}[row sep=large]
\F \ar[bend left=15, pos=0.55]{r}{(1\,0)^*} \ar{d}{1} & \ar[bend left=15, pos=0.45]{l}{(0\,1)} \F^2 \ar[bend left=15, pos=0.45]{r}{(0\,1)} \ar{d}{\alpha}&  \F  \ar[bend left=15, pos=0.55]{l}{(1\,0)^*} \ar{d}{-1}
\\
\F \ar[bend left=15, pos=0.55]{r}{(0\,-1)^*}  & \ar[bend left=15, pos=0.45]{l}{(1\,0)} \F^2 \ar[bend left=15, pos=0.45]{r}{(-1\,0)} &  \F \ar[bend left=15, pos=0.55]{l}{(0\,1)^*} 
\end{tikzcd}
\ \text{where}\
\alpha = \left( 
\begin{array}{cc}
0 & 1\\
-1& 0
\end{array}
\right).
\]
This has an evident isotropic subobject given by the simple object supported on the middle vertex. The corresponding isotropic reduction is the direct sum 
\[
I_{S_1\cup S_2 \cup S_3} \oplus -I_{S_3} = I_{X_3} \oplus -I_{\pt}
\]
 where we identify perverse sheaves and quiver descriptions, and write $I_Y$ for the intersection form on the intersection cohomology complex, with coefficients in $\F$, of a stratified space $Y$.

As before, let $S_3 \stackrel{\imath_2}{\longrightarrow} S_2\cup S_3\stackrel{\imath_1}{\longrightarrow} S_1\cup S_2\cup S_3$ be the closed inclusions, and let $\jmath_1 \colon S_1 \hookrightarrow S_1 \cup S_2\cup S_3$ and $\jmath_2 \colon S_2 \hookrightarrow S_2\cup S_3$ be the complementary open inclusions, and $\jmath_3 \colon S_3 \to S_3$ be the identity. Then
\[
\imath_1^{!*}\beta = 
\left( 
\begin{tikzcd}
0 \ar[bend left=15]{r} \ar{d}& \ar[bend left=15, pos=0.45]{l} \F \ar[bend left=15, pos=0.45]{l} \ar{d}{ -1}
\\
0 \ar[bend left=15]{r} & \ar[bend left=15, pos=0.45]{l} \F \ar[bend left=15, pos=0.45]{l}
\end{tikzcd}
\right)
=-I_{S_3}
\]
so that  $\imath_2^{!*}\imath_1^{!*}\beta=-I_{S_3}$ too. In contrast, $(\imath_1\circ \imath_2)^{!*} \beta=0$.  Hence (\ref{cs1}) is
\begin{align*}
[\beta] &=[{\jmath_1}_{!*}\jmath_1^* \beta] +[{\jmath_2}_{!*}\jmath_2^* \imath_1^{!*}\beta] + [{\jmath_3}_{!*}\jmath_3^*  \imath_2^{!*}\imath_1^{!*}\beta] \\
&=[I_{S_1\cup S_2\cup S_3}] +[0]+ [-I_{S_3}] \\
&= [I_{X_3}] - [I_{\pt}],
\end{align*}
in agreement with the isotropic reduction. However the formula (\ref{cs2 order independent}) is false in this case:
\[
\sum_{S \subset X} [ {\jmath_S}_{!*}\jmath_S^* \imath_S^{!*} \beta ] = [I_{X_3}]
\]
because $\imath_{S_3}^{!*}\beta = (\imath_1\circ \imath_2)^{!*} \beta=0$ and $\imath_{S_2}^{!*}\beta = (\imath_1\circ \jmath_2)^{!*} \beta= \jmath_2^* \imath_1^{!*}\beta = 0$ by the above calculations. Since $[I_{\pt}]\neq 0$ this does not agree with $[\beta] = [I_{X_3}] - [I_{\pt}]$. Therefore (\ref{cs2 order independent}) does not hold without further assumptions on the form, for instance that it is anisotropic. 
\end{example}

\subsection{Perverse sheaves on Schubert-stratified projective spaces}
\label{schubertstrat}

We consider a similar example but where the total space and the closures of each stratum are smooth. The main reference is \cite{MR1900761}, although we consider only the special case of projective spaces rather than all Grassmannians. The quiver description of perverse sheaves on Schubert-stratified projective spaces is well-known in the literature, e.g.\ \cite[Alternative proof of Proposition 2.9]{MR1862802} and \cite[Example 1.1]{stroppel}, however we need Braden's geometric approach in order to identify the action of Verdier duality.

Let $W$ be an $n$-dimensional complex vector space with a complete flag 
\[
W_0 \subset \cdots \subset W_n=W
\]
 of linear subspaces where $\dim_\C W_i = i$. Let $X=P(W)$ be the corresponding $(n-1)$-dimensional complex projective space with the Schubert stratification with strata $S_i = P(W_{n-i+1})- P(W_{n-i}) \cong \C^{n-i}$ for $i=1,\ldots,n$.

The category $\perv{X}$ of perverse sheaves with coefficients in the field $\F$ constructible with respect to the Schubert stratification is equivalent to the category of finite-dimensional $\F$-representations of the quiver with relations 
\[
\mathbb{A}'_n = \left(
\left.
\begin{tikzcd}
 1  \ar[bend left=15, pos=0.6]{r}{c_1}    &   \ar[bend left=15, pos=0.4]{l}{v_1} \cdots  \ar[bend left=15]{r}{c_{n-1}}& n \ar[bend left=15]{l}{v_{n-1}} 
\end{tikzcd}
\right |
\begin{array}{l}
v_1c_1=0\\
c_kv_k = v_{k+1}c_{k+1} \ \text{for}\ k=1,\ldots,  n-2\\
c_kc_{k-1} = 0 = v_{k-1}v_k \ \text{for}\ k=2, \ldots, n-1
\end{array}
\right).
\]
We write $\mu_k = 1+ v_kc_k$ for $k=1,\ldots, n-1$ and $\mu_n=1+c_{n-1}v_{n-1}$, and refer to these as the monodromies of the representation. By the conditions above each monodromy is { unipotent}, with $\mu_1=1$. This equivalence is a special case of \cite[Theorem 1.4.1]{MR1900761}. The general quiver description for Grassmannians given in \cite[\S1.3]{MR1900761} reduces to the description above in our case. 

The equivalence is obtained by a similar procedure as for the rank stratification case, except that one has to consider micro-local perverse sheaves through codimension $0$, $1$ and now also $2$. The argument for codimensions $0$ and $1$ is as before: first one maps a perverse sheaf to its stratified Morse data, a vector space at each stratum $S_i$ together with the (micro-local) monodromy $\mu_i$, see \cite[Proposition 4.3.1]{MR1900761}. This monodromy depends on a choice of generator of the micro-local fundamental group, which in each of these cases is again infinite cyclic. There is an arrow between vertices $i$ and $j$ if and only if the conormal spaces of $S_i$ and $S_j$ intersect in codimension $1$ \cite[Corollary 2.5.2]{MR1900761}, which for us is if and only if $|i-j|=1$. 

The relations $v_1c_1=0$ and $c_kv_k = v_{k+1}c_{k+1} $ for $k=1,\ldots,  n-2$ are deduced in the same way as for the rank stratification discussed in the previous section. The key technique is again to reduce to considering monodromic perverse sheaves on line bundles, see \cite[Lemma 3.4.1]{MR1900761}. The third type of relations $c_kc_{k-1} = 0 = v_{k-1}v_k$ for $k=2, \ldots, n-1$, c.f.\ \cite[(4) on p497]{MR1900761}, are obtained by considering codimension $2$ intersections of conormal spaces, i.e.\ for strata $S_i$ and $S_j$ with $|i-j|=2$, see \cite[Proposition 2.6.2]{MR1900761}. 

In order to describe Verdier duality in the quiver description we renormalise as in the rank stratification example. This is possible because as before all micro-local monodromies are { unipotent}. The quiver with relations  $\mathbb{A}'_n$ is unchanged, however the description of the (micro-local) monodromies is now 
$\mu_k = e^{N_k}$ with $N_k= V_kc_k$ for $k=1,\ldots, n-1$ and $\mu_n=e^{N_n}$ with $N_n=c_{n-1}V_{n-1}$. 
Note that as before the usual biduality isomorphism $\chi: \id\to D^2$ 
for quiver representations needs to be modified by a sign $(-1)^{n-k}$
at the vertex $k$ corresponding to a stratum $S_k$ of complex dimension $n-k$.
The descriptions of the six functors and of the intermediate extension and restriction remain the same because, as before, the description is compatible with restriction to a normal slice, see \cite[\S 4.2]{MR1900761}. We now work only with this modified identification. 

Example \ref{cs2 failure} transfers without change to this example in the $n=3$ case, i.e.\ for the Schubert stratification of $\C\P^2$. Here, not only are all three strata contractible but also their closures are smooth and simply-connected. Even under these strong conditions equation (\ref{cs2 order independent}) does not hold.

\begin{remark}
The path algebras of the  quivers with relations $\mathbb{A}_n$ and $\mathbb{A}'_n$, or their representation categories, appear in various other contexts:
\begin{enumerate}
\item as the Auslander algebra of $\C[x]/\langle x^n \rangle$ \cite{ploog};
\item in the braid group actions on categories studied in \cite{MR1862802};
\item  as convolution algebras related to hyperplane arrangements \cite[Example 4.6 and Theorem 4.8]{MR2680198};
\item as `hypertoric enveloping algebras' \cite[Example 4.11]{MR2964613}.
\end{enumerate}
As explained in these references, the representation categories of  $\mathbb{A}_n$ and $\mathbb{A}'_n$ are Koszul dual. The Koszul grading for $\mathbb{A}'_n$, and more generally for Braden's quiver description of perverse sheaves on Grassmannians, becomes visible only after a renormalisation similar to the one we use to understand Verdier duality, but this time using the square root of the power series (\ref{log power series}), see \cite[\S 5.7]{MR2521250}.
\end{remark}

\subsection{Relation to Cappell and Shaneson's work}
\label{relation to cs}
In the paper \cite{cs} Cappell and Shaneson introduce a notion of cobordism of self-dual complexes of sheaves { of vector spaces}, that is of objects $\mathcal{B} \in \constr{X}$ equipped with an isomorphism $\beta \colon \mathcal{B}\to D\mathcal{B}$, which is not assumed to have any symmetry properties. (Their definition of self-dual isomorphism involves a shift by $[\dim X]$, but we omit this because we are using the conventions of \cite{bbd} for indexing perverse sheaves rather than those of \cite{gm2}.) Let { $\Omega^{\pm}_{\textrm{\sc{CS}}}(X)$ denote the set of cobordism classes of constructible sheaf complexes with an (anti-)symmetric self-duality}.  The cobordism relation is generated by `elementary cobordisms' which arise from isotropic morphisms $\imath\colon  \mathcal{A} \to \mathcal{B}$. In the special case in which $\mathcal{A},\mathcal{B} \in \perv{X}$ and $\imath$ is a monomorphism, $\beta$ and $\ir{\beta}{\mathcal{A}}$ are elementarily cobordant. Thus there is a homomorphism
\[
{ \wgc{\pm}{\perv{X}} \to \Omega^{\pm}_{\textrm{\sc{CS}}}(X).}
\]
(Cappell and Shaneson do not discuss the structure of the set of cobordism classes, but \cite{MR1283567} shows that it is an abelian group under direct sum.)
{ Moreover, the homomorphism above is an isomorphism by \cite[Theorem 7.4]{youssin}.} This understood, their Theorem 2.1 states that the image of  (\ref{cs2 order independent}) holds in $\Omega_{\textrm{\sc{CS}}}(X)$. Example \ref{cs2 failure} above shows that this is incorrect ---  in that case there is a missing term corresponding to the class of the intersection form of a point --- and therefore that further conditions are required for their result. (Cappell and Shaneson work with compact spaces, so to be absolutely precise one should use the counterpart of Example \ref{cs2 failure} for Schubert-stratified projective spaces.) On \cite[p534]{cs}, in order to apply their (1.3), Cappell and Shaneson assume that ${}^p\imath_k^{!}\mathcal{A}=0$ implies ${}^p\imath_k^{*}\imath_{k+1}^!\mathcal{A}=0$. It is this which fails in Example \ref{cs2 failure}.

Cappell and Shaneson's decomposition is valid, and even lifts to the Witt group of perverse sheaves, when the form $\beta$ is anisotropic. It is also valid for any form $\beta$ on a sufficiently nice space  $X$, for instance when the second condition of  { Corollary \ref{cs formula conditions}} is satisfied. Another case in which it is valid is when the depth of $X$ is one, although in this case it may not correspond to the canonical decomposition. 

Let us suppose that we are in one of these `good' cases in which (\ref{cs2 order independent}) holds. Suppose further that $f\colon  Y \to X$ is a proper stratified map --- \ie a proper map such that the preimage of any stratum is a union of strata, and the restriction $f|_S \colon  S \to f(S)$ to any stratum is a locally trivial fibre bundle --- of Whitney stratified spaces with only even-dimensional strata. { Assume $Y$ has a dense top dimensional stratum which is oriented.} Then the intersection form $I_Y\colon \ic{Y} \to D \ic{Y}$ { of the corresponding intersection cohomology complex is non-degenerate in $\perv{Y}$ and is symmetric for $\dim Y \equiv 0\:(4)$ and anti-symmetric for $\dim Y \equiv 2\:(4)$}. Proper push-forward $f_*=f_!$ commutes with duality and so induces a map of Witt groups
{  $\wgc{\pm}{\perv{Y}} \to \wgc{\pm}{\perv{X}}$.} Hence (\ref{cs2 order independent}) yields
\[
[f_*I_Y] = \sum_S [{\jmath_S}_{!*}\jmath_S^*\imath_S^{!*}f_*I_Y].
\]
By proper base change $\imath_S^! f_* = f_* \ell_S^!$ and $\imath_S^* f_* = f_* \ell_S^*$, where $\ell_S \colon  f^{-1}\overline{S} \hookrightarrow X$. Hence 
\[
\imath_S^{!*}f_* = \im\!\left( H^0f_* \ell_S^!I_Y \to H^0f_* \ell_S^*I_Y \right)\!.
\]
 Section 4 of \cite{cs} uses this identification to interpret the local system $\jmath_S^*\imath_S^{!*}f_*I_Y$ on $S$ geometrically. The stalk is the middle-dimensional intersection cohomology of $f^{-1}N_x / f^{-1}L_x$ where $N_x$ is a normal slice to $S$ at $x\in X$, and $L_x = \partial N_x$ is the link. In this way one can obtain formul\ae\ for the Witt class, and thence the signature and $L$-class, of $Y$ as a sum of terms indexed by the strata of $X$, each with a natural geometric interpretation.

\subsection{Families of stratifications}

We make some brief remarks about Witt groups of perverse sheaves constructible with respect to a family of stratifications, rather than a fixed one. Let $\strat$ be a collection of stratifications of $X$ with only even dimensional strata, and such that any two stratifications admit a common refinement in $\strat$. Let $\pervc{\strat}{X}$ be the category of $\strat$-constructible perverse sheaves of $\F$-vector spaces on $X$. 
For example, $\strat$ might consist of a single stratification, or more interestingly $X$ might be a complex algebraic or analytic variety and $\strat$ the collection of all algebraic, respectively analytic, stratifications. 

We make $\strat$ into a poset with the ordering $S \leq \mathcal{S}' \iff \mathcal{S}'$ is a refinement of $\mathcal{S}$, \ie the strata of $\mathcal{S}$ are unions of strata of $\mathcal{S}'$. There is a fully-faithful inclusion
\[
\pervc{\mathcal{S}}{X} \hookrightarrow \pervc{\mathcal{S}'}{X}
\]
whenever $\mathcal{S} \leq \mathcal{S}'$. Moreover this inclusion commutes with duality and so induces a map of Witt groups.
\begin{proposition}
 Elements of the Witt group of $\strat$-constructible perverse sheaves $W(\pervc{\strat}{X})$ are represented by elements of $W( \pervc{\mathcal{S}}{X})$ for some stratification $\mathcal{S} \in \strat$; two such represent the same element if and only if they agree in the Witt group of perverse sheaves constructible with respect to { a common} refinement. In other words,
\[
W(\pervc{\strat}{X}) \cong \colim_{\mathcal{S} \in \strat} W( \pervc{\mathcal{S}}{X}).
\]
\end{proposition}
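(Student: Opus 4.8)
The plan is to prove both assertions at once by showing that the natural homomorphism
\[
\Phi\colon \colim_{\mathcal{S}\in\strat} W(\pervc{\mathcal{S}}{X}) \longrightarrow W(\pervc{\strat}{X}),
\]
assembled from the compatible maps induced by the inclusions $\pervc{\mathcal{S}}{X}\hookrightarrow\pervc{\strat}{X}$, is an isomorphism; the explicit description of elements of $W(\pervc{\strat}{X})$ and of when two agree is simply the concrete form of this statement for a filtered colimit of abelian groups. First I would record the structural input. Since $\strat$ is up-directed, an easy induction shows that any finite collection of objects of $\pervc{\strat}{X}$ lies in $\pervc{\mathcal{S}}{X}$ for a single $\mathcal{S}\in\strat$, and then, by full faithfulness of the inclusions, so does any finite collection of morphisms among them; thus $\pervc{\strat}{X}$ is the filtered union of the subcategories $\pervc{\mathcal{S}}{X}$, in particular an abelian category with duality in which kernels, cokernels, images and Verdier duals of data drawn from a fixed $\pervc{\mathcal{S}}{X}$ are already computed in that subcategory. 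Moreover, for $\mathcal{S}\leq\mathcal{S}'$ a sub- or quotient object, or an extension, of $\mathcal{S}$-constructible perverse sheaves is again $\mathcal{S}$-constructible, so $\pervc{\mathcal{S}}{X}$ is a Serre subcategory of $\pervc{\mathcal{S}'}{X}$; hence the inclusion is exact, and, being also fully faithful and so faithful, it reflects zero objects and therefore reflects exactness of sequences. Finally each inclusion strictly commutes with Verdier duality and the self-dual perverse $t$-structure.

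For surjectivity of $\Phi$, let $\beta\colon\mathcal{B}\to D\mathcal{B}$ be a non-degenerate symmetric form in $\pervc{\strat}{X}$ and choose $\mathcal{S}$ with $\mathcal{B}\in\pervc{\mathcal{S}}{X}$. Then $D\mathcal{B}\in\pervc{\mathcal{S}}{X}$, and since $\Hom{\pervc{\mathcal{S}}{X}}{\mathcal{B}}{D\mathcal{B}}=\Hom{\pervc{\strat}{X}}{\mathcal{B}}{D\mathcal{B}}$ the morphism $\beta$, its inverse (a fully faithful functor reflects isomorphisms) and the biduality isomorphism $\chi_{\mathcal{B}}$ all lie in $\pervc{\mathcal{S}}{X}$. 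So $\beta$ is already a non-degenerate symmetric form in $\pervc{\mathcal{S}}{X}$, defining a class there whose image under $\Phi$ is $[\beta]$.

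For injectivity, suppose classes $[\beta_0]\in W(\pervc{\mathcal{S}_0}{X})$ and $[\beta_1]\in W(\pervc{\mathcal{S}_1}{X})$ have the same image in $W(\pervc{\strat}{X})$. By definition of Witt equivalence in the abelian category $\pervc{\strat}{X}$ there are metabolic forms $\eta_0,\eta_1$, with chosen lagrangians $\alpha_0,\alpha_1$, and an isometry $\beta_0\oplus\eta_0\cong\beta_1\oplus\eta_1$. All objects and morphisms appearing here form a finite collection, so by directedness there is a refinement $\mathcal{S}\geq\mathcal{S}_0,\mathcal{S}_1$ with all of them in $\pervc{\mathcal{S}}{X}$. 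Since the inclusion $\pervc{\mathcal{S}}{X}\hookrightarrow\pervc{\strat}{X}$ is exact and reflects exact sequences, the short exact sequences witnessing $\alpha_0,\alpha_1$ as lagrangians remain exact in $\pervc{\mathcal{S}}{X}$, so $\eta_0,\eta_1$ are metabolic there, and the isometry is an isometry in $\pervc{\mathcal{S}}{X}$. Hence $[\beta_0]=[\beta_1]$ already in $W(\pervc{\mathcal{S}}{X})$, which is exactly the statement that $(\mathcal{S}_0,[\beta_0])$ and $(\mathcal{S}_1,[\beta_1])$ are identified in the colimit.

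The only point requiring genuine care is the claim that each $\pervc{\mathcal{S}}{X}$ sits in $\pervc{\mathcal{S}'}{X}$ as a Serre subcategory, equivalently that the transition functors are exact and reflect exact sequences (\cf\ \cite[\S4.2]{MR2031639}); everything else is the standard observation that Witt equivalence is witnessed by only finitely much data and so descends to a finite stage of a filtered system.
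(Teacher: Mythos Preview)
Your proof is correct and follows essentially the same approach as the paper: both construct the natural map from the colimit and check surjectivity and injectivity by observing that a form, and the finite amount of data witnessing a Witt equivalence, already live in $\pervc{\mathcal{S}}{X}$ for some sufficiently fine $\mathcal{S}\in\strat$. The only cosmetic difference is that you phrase Witt equivalence via metabolic stabilisation while the paper uses the equivalent description via a finite chain of isotropic reductions; one small caution is that the full Serre subcategory claim is stronger than what you actually use and need, namely that the fully faithful inclusions $\pervc{\mathcal{S}}{X}\hookrightarrow\pervc{\mathcal{S}'}{X}$ are exact (hence reflect exactness), which follows directly from the fact that the perverse $t$-structure on $\constr{X}$ for $\mathcal{S}'$ restricts to that for $\mathcal{S}$.
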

\begin{proof}
The universal property of the colimit induces a map 
\[
\colim_{\mathcal{S} \in \strat} W( \pervc{\mathcal{S}}{X}) \to W(\pervc{\strat}{X}).
\]
It is surjective since each class in $W(\pervc{\strat}{X})$ is represented by a form on a perverse sheaf which is constructible with respect to some particular stratification in $\strat$. If two such forms are equivalent, then the equivalence is realised by a finite sequence of isotropic reductions. So the forms are already equivalent in the Witt group of perverse sheaves constructible with respect to any sufficiently refined stratification for which all objects in this sequence are constructible. Hence the map is also injective.
\end{proof}

Say that $\strat$ is \defn{artinian} if the poset of closed unions of strata, considered as subspaces of $X$ ordered by inclusion, of all stratifications in $\strat$ is artinian. For example this holds if we work in the complex algebraic (respectively analytic) context with the collection of all algebraic (respectively analytic)  Whitney stratifications (on a compact analytic space). When this is the case the category $\pervc{\strat}{X}$ is both artinian and noetherian --- for algebraic stratifications this is \cite[Th\'eor\`eme 4.3.1]{bbd}, the general case is proved in a similar fashion. A simple object is an intermediate extension of an irreducible local system $\mathcal{L}$ on a stratum $S$. Two such, $\mathcal{L}$ on $S$, and $\mathcal{L}'$ on $S'$  are isomorphic if and only if there is a stratum $S''$, dense and open in both $S$ and $S'$, such that $\mathcal{L}|_{S''} \cong \mathcal{L}'|_{S''}$. Applying Corollary \ref{simples generate witt gp} we obtain
\begin{corollary}
If $\strat$ is artinian then each class in $W(\pervc{\strat}{X})$ has a decomposition into a sum of classes represented by forms on simple objects. The sum of terms represented by forms on a given isomorphism class of simple objects is well-defined. 
\end{corollary}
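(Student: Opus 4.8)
The plan is to apply the general structure theory for Witt groups of noetherian abelian categories with exact duality developed in \S\ref{witt groups}, namely Corollary~\ref{simples generate witt gp} and Corollary~\ref{direct-sum-cor}, directly to the category $\cat{A} = \pervc{\strat}{X}$. Since $\strat$ is artinian, it has already been noted above that $\pervc{\strat}{X}$ is both artinian and noetherian, and Verdier duality restricts to an exact duality on it. Thus the hypotheses of those two corollaries are satisfied, and all that remains is to rephrase their conclusions in terms of simple objects on strata.

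For the first assertion, Corollary~\ref{simples generate witt gp} provides an anisotropic representative $\beta$ for any given Witt class, and shows (via the third part of Remarks~\ref{splitting remarks}) that $\beta$ is isometric to a finite orthogonal direct sum of non-degenerate symmetric forms on simple objects of $\pervc{\strat}{X}$. Any simple object carrying a non-degenerate form is necessarily self-dual; concretely such objects are the intermediate extensions of irreducible local systems on strata, as recalled above. Taking Witt classes yields the desired decomposition of $[\beta]$ into a sum of classes of forms on simple objects.

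For the second assertion, that the partial sum of those terms whose underlying simple object lies in a fixed isomorphism class is \emph{well-defined}, I would invoke Corollary~\ref{direct-sum-cor}, which exhibits a direct sum decomposition $W(\pervc{\strat}{X}) \cong \bigoplus_{[s\cong Ds]} W(\langle s\rangle)$ indexed by isomorphism classes of self-dual simple objects. The partial sum attached to the class of a self-dual simple $s$ is precisely the component of $[\beta]$ in the summand $W(\langle s\rangle)$, and hence depends only on the Witt class, not on the chosen anisotropic representative. (Alternatively one may cite the uniqueness up to isometry of anisotropic representatives, used already in the proof of Corollary~\ref{simples generate witt gp}, so that the multiset of simple summands is an invariant of the class and therefore so is each of its isotypic parts.) For an isomorphism class of a non-self-dual simple object there is nothing to check, since no term with such underlying object appears in an anisotropic decomposition.

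The only input not already contained in \S\ref{witt groups} is the verification that $\pervc{\strat}{X}$ is noetherian together with the identification of its simple objects; both have been carried out in the preceding discussion (the noetherian property following \cite[Th\'eor\`eme 4.3.1]{bbd} in the algebraic case and by the same argument in general), so no substantial obstacle remains --- the statement is essentially a specialisation of Corollaries~\ref{simples generate witt gp} and~\ref{direct-sum-cor}.
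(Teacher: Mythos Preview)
Your proposal is correct and follows essentially the same approach as the paper: the corollary is stated there as an immediate consequence of Corollary~\ref{simples generate witt gp} (the paper gives no separate proof), and your additional invocation of Corollary~\ref{direct-sum-cor} to justify the well-definedness of the isotypic partial sums is an appropriate elaboration of what the paper leaves implicit.
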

Irrespective of whether $\strat$ is artinian or not, one can apply Theorem \ref{sum thm} inductively to obtain formul\ae\  like (\ref{cs1}). If one decomposes in this way according to a stratification with respect to which a representative for the class is constructible then the summands will be represented by forms on intermediate extensions of local systems. In `good' cases (anisotropic forms or exact intermediate extensions) this sum will correspond to the canonical decomposition of the above corollary. 

\subsection{Unipotent nearby and vanishing cycles}

Let $X$ be a complex algebraic variety. Let $\perva{X}$ denote the algebraically constructible perverse sheaves on $X$. Fix an algebraic map $f\colon X \to \C$ and let $\imath \colon Y=f^{-1}(0) \hookrightarrow X$ and $\jmath \colon U=X-Y \hookrightarrow X$ be the inclusions. An important feature of this situation is that { the open inclusion  $\jmath\colon U \hookrightarrow X$ is an affine morphism}, which implies that $\pf{\jmath_!} = \jmath_!$ and $\pf{\jmath_*} = \jmath_*$.

There are exact functors
\[
\begin{tikzcd}
\perva{U} \ar{rr}{\ME} \ar{dr}[swap]{\NC} && \perva{X} \ar{dl}{\VC}\\
& \perva{Y}
\end{tikzcd}
\]
constructed in \cite{Beilinson1987}, see also the notes \cite{reich}. The functor $\ME$ is the \defn{maximal extension}, $\NC$ the \defn{unipotent nearby cycles}, and $\VC$ the \defn{unipotent vanishing cycles}. Here we follow the presentation of
\cite{Morel}, which is better adapted for the discussion of Verdier duality
\begin{remark}
In this section we work in the complex algebraic context without fixing a complex algebraic Whitney stratification.
However, all results apply to the case of a fixed Whitney stratification of $X$ in the complex algebraic or analytic context (with the same arguments), if $Y=f^{-1}(0)$ is a closed union of strata. For in that situation the corresponding constructible derived categories as well as the categories of perverse sheaves are stable under the functors  $\pf{\jmath_!} = \jmath_!$ and $\pf{\jmath_*} = \jmath_*$, as well as under $\NC$ and $\VC$ (see \cite[\S4.2.2 and \S6.0.4]{MR2031639}).
\end{remark}

Let $\Z(1)$ denote the orientation sheaf $\text{or}_{\C^*}$ of $\C^*$ and, by abuse of notation, also its stalk $\text{or}_{\C^*,1}\cong 2\pi i \Z$ at the chosen base point $1\in \C^*$. There is a natural representation $t$ of $\pi_1(\C^*,1)$ on  $\Z(1)$. A choice of  orientation of $\C^*$, equivalently of a generator $g\in \pi_1(\C^*,1)$, identifies $\Z(1)\cong \Z$ with the constant sheaf of integers with $t(g)=1$. As previously discussed, Verdier duality switches the chosen orientation to the opposite one with $t(g^{-1})=-t(g)$. In the following we therefore want to work without choosing an orientation.

For $n\geq 0$ let $\Z(n)=\Z^{\otimes n}$ and $\Z(-n)=\Z(-1) ^{\otimes n}$, where $\Z(-1)=\Z(1)^*$ is the dual local system.  Again we use the same notation for their stalks at the base point $1\in \C^*$, as well as for the corresponding local systems on $U=X-Y$ pulled back via $f: U\to \C^*$. Similarly  $-(n)=-\otimes_{\Z}\Z(n)$ denotes the corresponding Tate-twists of sheaves or stalks of $\mathbb{F}$-vector spaces, with $\mathbb{F}$ our base field of characteristic zero.

Consider for $p\geq 1$ the $p$-dimensional  $\mathbb{F}$-vector space  
\[
L^p=\mathbb{F}\oplus \mathbb{F}(-1)\oplus \dots \oplus  \mathbb{F}(1-p)
\]
together with the nilpotent morphism $N: L^p\to L^p(-1)$ given by  the matrix
\[
N=
\left(
\begin{array}{cccc}
0 & 1 & 0&   \\
0 & 0  & 1 & \\
0 & 0 & 0&  \\
 && &\ddots
\end{array}
\right).
\]
Let $\mathcal{L}^p$ be the corresponding local system on $\C^*$, with stalk $L^p$ in $1\in \C^*$, and monodromy action
\[
\mu(g)=e^{t(g)\cdot N}: L^p\to L^p
\]
 for $g\in  \pi_1(\C^*,1)$ any generator. For $p+q=n$ there is a short exact sequence
\begin{equation}
\label{local system ses}
0 \to \mathcal{L}^p \to \mathcal{L}^n \to \mathcal{L}^q(-p) \to 0
\end{equation}
where the maps are inclusion of the first $p$ coordinates, and projection onto the last $q$ coordinates. 

The unipotent nearby cycles of a perverse sheaf $\mathcal{A}$ on $U$ are defined by
\[
\imath_* \NC\mathcal{A}= \lim_{n\to \infty} \ker\left[ \jmath_! \left(\mathcal{A} \otimes f^*\mathcal{L}^n\right) \to \jmath_* \left(\mathcal{A} \otimes f^*\mathcal{L}^n\right)\right],
\]
where the map on the right hand side is the natural one. The kernel of this stabilises for sufficiently large $n$, and the limit denotes this stable kernel, see \cite[Corollary 3.2]{Morel}. The maximal extension of $\mathcal{A}$ is constructed similarly as
\[
\ME\mathcal{A} = \lim_{n\to \infty} \ker\left[ \jmath_! \left(\mathcal{A} \otimes f^*\mathcal{L}^n\right) \to\jmath_* \left(\mathcal{A} \otimes f^*\mathcal{L}^{n-1}\right) (-1)\right]
\]
where the map on the right is induced from the quotient in (\ref{local system ses}) with $q=n-1$, and once again the kernel stabilises for sufficiently large $n$, see \cite[Proposition 5.1]{Morel}. The action of $N: \mathcal{L}\to \mathcal{L}(-1) $ induces respective  actions  $\NC\mathcal{A}\to \NC\mathcal{A}(-1)$ and $\ME\mathcal{A}\to\ME\mathcal{A}(-1) $, which we also denote by $N$, and the same holds for the induced monodromy action $\mu(g)=e^{t(g)\cdot N}$ of a generator $g$ of $\pi_1(\C^*,1)$ on   $\NC\mathcal{A}$ and $\ME\mathcal{A}$.

Whereas the maximal extension functor commutes with Verdier duality \cite[Corollary 5.4]{Morel}, the unipotent nearby cycle functor commutes with Verdier duality only up to a Tate-twist \cite[Corollary 4.2]{Morel}:
\begin{equation}
D\left(  \NC\mathcal{A} \right) \cong  \NC \left( D(\mathcal{A})\right) (-1)\:.
\end{equation}
Moreover, there are two natural short exact sequences
\begin{equation}\label{dual SESs1} 
\begin{tikzcd}
0\ar{r} & \jmath_!\mathcal{A} \ar{r}{\alpha_- } & \ME \mathcal{A} \ar{r}{\beta_-} & \imath_* \NC\mathcal{A} (-1) \ar{r}&  0
\end{tikzcd}
\end{equation}
and
\begin{equation}\label{dual SESs2}  
\begin{tikzcd}
0\ar{r}& \imath_* \NC\mathcal{A}  \ar{r}{\beta_+} &  \ME \mathcal{A} \ar{r}{ \alpha_+}&  \jmath_*\mathcal{A}  \ar{r}&  0
\end{tikzcd}
\end{equation}
which are exchanged by duality \cite[Proposition 5.1, Corollary 5.4]{Morel}. The maps are induced from those in (\ref{local system ses}) for $(p,q)=(1,n-1)$ and $(n-1,1)$. The composite  $\alpha_+\circ \alpha_-$ is the natural map, and $\beta_-\circ\beta_+ = N$ \cite[Remark 5.6]{Morel}.
In particular the action $N: \NC\mathcal{A}\to \NC\mathcal{A}(-1)$ commutes with the duality isomorphism above:
$D\circ N\cong N\circ D$. This also holds without Tate-twists, if one chooses opposite generators of $\pi_1(\C^*,1)$ on both sides of this identification.
Otherwise a minus sign shows up, e.g.\ if one chooses on both sides the complex orientation of $\C^*$ as in \cite{MR1045997}.

The perverse unipotent vanishing cycles $\VC \mathcal{B}$ of $\mathcal{B}\in\perva{X}$ are defined to be (the restriction of) the cohomology 
$\imath^*H^0(-)$ of the complex
\[
\begin{tikzcd}
\jmath_!\jmath^*\mathcal{B} \ar{rr}{(\alpha_-,\gamma_-)^t }&&   \ME \jmath^*\mathcal{B} \oplus \mathcal{B} \ar{rr}{ (\alpha_+,-\gamma_+)} &&\jmath_*\jmath^*\mathcal{B} 
\end{tikzcd}
\]
 sitting in degrees $-1$ to $1$, where $\gamma_\pm$ are the unit and counit of the adjunctions. Note that the first, respectively last,  morphism in this complex is injective, respectively surjective, with its cohomology $H^0(-)$ supported on $Y$ (since its restriction to $X-Y$ is vanishing). That $\VC$ commutes with duality follows from the fact that duality interchanges the above two short exact sequences \cite[Remark 6.1]{Morel}. One also gets induced morphisms 
\[
\begin{tikzcd}
\NC\jmath^*\mathcal{B} \ar{r}{\text{can}}&  \VC \mathcal{B} \ar{r}{\text{Var}} & \NC\jmath^*\mathcal{B}(-1),
\end{tikzcd}
\]
of perverse sheaves on $Y$ with $N=\text{Var} \circ \text{can}$, so that $\text{can}$ and $\text{Var}$ are exchanged by duality \cite[Remark 6.1]{Morel}. Moreover, the category $\perva{X}$ can be described in terms of the gluing data
\cite[Theorem 8.1]{Morel}:
\[
\mathcal{B} \mapsto \left( \jmath^*\mathcal{B},  \VC \mathcal{B}, \text{can}, \text{Var} \right).
\] 
For example $\jmath_{!*}\mathcal{A}$ has the following gluing data description (see also \cite[Proposition 4.7]{reich}):
\begin{equation}\label{gluing-intermediate}
\left( \mathcal{A}, \im(N: \NC \mathcal{A} \to \NC \mathcal{A}(-1)), N, \text{incl}\right),
\end{equation}
with $N: \NC (\jmath^{*} \jmath_{!*}\mathcal{A})=\NC \mathcal{A} \to \NC \mathcal{A}(-1)=\NC (\jmath^{*} \jmath_{!*}\mathcal{A}) (-1) $ factorised as
\[
\begin{tikzcd}
\NC \mathcal{A} \ar{r}{N}& \im(N) \ar{r}{\text{incl}} & \NC \mathcal{A}(-1).
\end{tikzcd}
\]
Since the unipotent vanishing cycles and the maximal extension commute with duality they induce maps of Witt groups. We abuse notation by using the functors to denote these induced maps.
\begin{lemma}
\label{hypersurface splitting}
The map $[\beta] \mapsto \left( \jmath^*[\beta] , \VC [\beta] \right)$ is an isomorphism
\[
W(\perva{X}) \cong W(\perva{U}) \oplus W(\perva{Y})
\]  
with inverse $\left( [\beta], [\beta'] \right) \mapsto \ME [\beta] + \imath_* [\beta']$.
\end{lemma}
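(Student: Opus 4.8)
The plan is to realize this as an instance of the gluing/splitting formalism of \S\ref{gluing and splitting}, applied not to the triangulated categories $\constr{Y}\to\constr{X}\to\constr{U}$ directly (where $\jmath^*$ does not have the right adjoint behaviour on perverse $t$-structures because $\jmath$ need not be affine in general --- though here it is) but to the \emph{abelian} gluing of \cite{Morel} as recorded in the Remark following that reference: $\perva{X}$ is equivalent to the gluing category built from $\perva{Y}$ and $\perva{U}$ via the data $\mathcal{B}\mapsto(\jmath^*\mathcal{B},\VC\mathcal{B},\mathrm{can},\mathrm{Var})$. In this abelian gluing the role of $\imath_*$ is played by extension by zero $\imath_*$ along $Y\hookrightarrow X$, the role of $\jmath^*$ by restriction $\jmath^*$, the role of $\jmath_!$ by $\jmath_!$ (which is perverse-exact since $\jmath$ is affine), and crucially the functor $\ME$ is the substitute for the pair of one-sided adjoints: we have the two short exact sequences (\ref{dual SESs1}) and (\ref{dual SESs2}) exhibiting $\jmath_!\to\ME\to\imath_*\NC(-1)$ and $\imath_*\NC\to\ME\to\jmath_*$. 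Since $\ME$ and $\VC$ commute with Verdier duality (and $\jmath^*$, $\imath_*$, $\jmath_!$, $\jmath_*$ commute with duality up to the usual identifications), all the functors in sight induce maps of Witt monoids and, being exact, of Witt groups; this is what legitimises writing $\ME[\beta]$, $\VC[\beta]$ etc.

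First I would check that the two composites are formally correct on the level of \emph{objects up to Witt-equivalent representatives}, then upgrade. For one composite: starting from $([\beta],[\beta'])$ with $\beta$ on $\mathcal{A}\in\perva{U}$ and $\beta'$ on $\mathcal{A}'\in\perva{Y}$, form $\ME\beta\oplus\imath_*\beta'$ on $\ME\mathcal{A}\oplus\imath_*\mathcal{A}'$; applying $\jmath^*$ kills $\imath_*\mathcal{A}'$ and sends $\ME\mathcal{A}$ to $\jmath^*\ME\mathcal{A}\cong\mathcal{A}$ (this is part of the defining properties of $\ME$ in \cite{Morel}), recovering $[\beta]$; applying $\VC$ sends $\imath_*\mathcal{A}'$ to $\mathcal{A}'$ (since $\VC\circ\imath_*\cong\mathrm{id}$, a standard property of unipotent vanishing cycles, visible from the defining complex for $\VC$) and sends $\ME\mathcal{A}$ to $\VC\ME\mathcal{A}$; the key computation is that $\VC\ME\cong 0$, which again follows from \cite{Morel} --- indeed the defining three-term complex for $\VC\ME\mathcal{A}$ is $\jmath_!\mathcal{A}\to\ME\mathcal{A}\oplus\ME\mathcal{A}\to\jmath_*\mathcal{A}$ with the two obvious maps, whose cohomology vanishes because $\ME\mathcal{A}$ is the (co)image and the sequences (\ref{dual SESs1}), (\ref{dual SESs2}) splice to give exactness. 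So $\VC(\ME\beta\oplus\imath_*\beta')=\beta'$, giving back $([\beta],[\beta'])$. Hence one composite is the identity on representatives.

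For the other composite: given a non-degenerate symmetric $\beta$ on $\mathcal{B}\in\perva{X}$, I must show $[\beta]=\ME\jmath^*[\beta]+\imath_*\VC[\beta]$ in $W(\perva{X})$. This is precisely where I would invoke the splitting machinery. The exact triangle $\imath_*\imath^!\to\mathrm{id}\to\jmath_*\jmath^*$ (together with $\jmath_!\jmath^*\to\mathrm{id}\to\imath_*\imath^*$) on the constructible derived category induces, on the heart $\perva{X}$, the exact sequence $\jmath_!\jmath^*\mathcal{B}\to\mathcal{B}\to\imath_*\imath^*\mathcal{B}$ (using $\jmath$ affine so $\jmath_!=\pf{\jmath_!}$ is exact and $\jmath_!\jmath^*\mathcal{B}\hookrightarrow\mathcal{B}$ is mono), so that by Corollary~\ref{splitting cor} we get $[\beta]=[\overline{\beta|_{\jmath_!\jmath^*\mathcal{B}}}]+[\overline{\gamma}]$ for the complementary piece $\gamma$. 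One identifies the first term with $[\jmath_{!*}\jmath^*\beta]$ exactly as in the proof of Theorem~\ref{sum thm}, and the second with $[\imath_*\imath^{!*}\beta]$. So in fact $[\beta]=[\jmath_{!*}\jmath^*\beta]+[\imath_*\imath^{!*}\beta]$. It then remains to match these two summands with $\ME\jmath^*[\beta]$ and $\imath_*\VC[\beta]$ up to metabolic error. From the gluing-data description (\ref{gluing-intermediate}), $\jmath_{!*}\mathcal{A}$ and $\ME\mathcal{A}$ differ by the exact sequences (\ref{dual SESs1})--(\ref{dual SESs2}): concretely Corollary~\ref{relating gluing and splitting} of the paper (which I may assume, as it appears in the introduction) gives $[\jmath_{!*}\gamma]=\ME[\gamma]+\imath_*[\NC\gamma\circ N]$ and $[\imath^{!*}\beta]=\VC[\beta]-[\NC(\jmath^*\beta)\circ N]$; adding $\imath_*$ of the second to the first with $\gamma=\jmath^*\beta$, the two $\pm\,\imath_*[\NC(\jmath^*\beta)\circ N]$ terms cancel, yielding $[\jmath_{!*}\jmath^*\beta]+[\imath_*\imath^{!*}\beta]=\ME\jmath^*[\beta]+\imath_*\VC[\beta]$, as required. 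Hence both composites are identities and the map is an isomorphism.

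The main obstacle, and the step needing genuine care rather than formal manipulation, is the verification that the abelian gluing of \cite{Morel} satisfies the hypotheses needed for the splitting argument --- in particular that $\jmath_!$ is perverse-exact (true since $\jmath$ is affine), that $\jmath_!\jmath^*\mathcal{B}\to\mathcal{B}$ is a monomorphism in $\perva{X}$, and that the identifications $\VC\circ\imath_*\cong\mathrm{id}$, $\jmath^*\circ\ME\cong\mathrm{id}$, $\VC\circ\ME\cong 0$ hold as stated; each of these is in \cite{Morel} but must be cited precisely. A secondary subtlety is keeping track of Tate twists and of the orientation-reversal in Verdier duality so that the forms $\ME\beta$, $\VC\beta$ really are symmetric for the relevant duality --- here one uses that $\ME$ commutes with $D$ on the nose \cite[Corollary 5.4]{Morel} while the twist in $D\NC\cong\NC D(-1)$ is absorbed because it enters symmetrically in (\ref{dual SESs1}) and (\ref{dual SESs2}), so that $\VC$ also commutes with $D$ \cite[Remark 6.1]{Morel}.
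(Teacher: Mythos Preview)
Your argument has two real problems.

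First, the claim that $\VC\ME\cong 0$ as a functor is false. The cohomology of the complex $\jmath_!\mathcal{A}\to\ME\mathcal{A}\oplus\ME\mathcal{A}\to\jmath_*\mathcal{A}$ does not vanish: by \cite[Corollary 7.2]{Morel} there is a functorial short exact sequence $0\to\NC\to\VC\ME\to\NC(-1)\to 0$. What is true, and what the paper uses, is that $\VC\ME$ vanishes \emph{on Witt groups}, because this short exact sequence exhibits $\NC\mathcal{A}$ as a lagrangian for the induced form $\VC\ME\beta$, which is therefore metabolic. This is easily fixed, but you should not claim the stronger functorial vanishing.

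Second, and more seriously, your proof of the other composite is circular. You invoke Corollary~\ref{relating gluing and splitting} to rewrite $[\jmath_{!*}\jmath^*\beta]$ and $[\imath^{!*}\beta]$ in terms of $\ME$ and $\VC$, but in the paper's logical order that corollary is \emph{deduced from} Lemma~\ref{hypersurface splitting}: its proof reads ``Hence by Lemma~\ref{hypersurface splitting}\ldots''. The appearance of those formulae in the introduction is a forward reference, not an independent input. The paper's subsequent Remark does indicate that the identities of Corollary~\ref{relating gluing and splitting} can alternatively be obtained directly as splitting relations from the exact sequences (\ref{dual SESs1}), (\ref{dual SESs2}) and the dual pair involving $\mathrm{can}$ and $\mathrm{Var}$; if you carried that out, your route via Theorem~\ref{sum thm} would become non-circular. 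As written, however, you are assuming what you need to prove.

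By contrast, the paper's own proof avoids Theorem~\ref{sum thm} entirely and is much shorter: it uses only the functorial identities $\jmath^*\ME\cong\id$ and $\VC\imath_*\cong\id$, the metabolic vanishing of $\VC\ME$ on Witt groups, and a direct isotropic-reduction argument (lift a lagrangian for $\jmath^*(\beta\oplus-\ME\jmath^*\beta)$ via $\jmath_!$ and reduce) to show that $[\beta]-\ME\jmath^*[\beta]$ lies in the image of $\imath_*$.
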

\begin{proof}
From the constructions $\VC \circ \imath_*$ and $\jmath^*\circ \ME$ are the identity. Therefore
\[
\ME [\beta] = \imath_*[\beta']
\]
implies $[\beta] = \jmath^* \ME[\beta] = 0$, and hence $[\beta']=\VC \imath_*[\beta']=0$ too. 

Given $[\beta] \in W(\perva{X})$ the form $\beta \oplus \ME \jmath^*\left(-\beta \right)$ is metabolic when restricted to $U$. Using $\jmath_!$ we can construct an isotropic subobject for this form from a lagrangian for the restriction. The reduction by this isotropic subobject will be supported on $Y$, so that 
\[
[\beta] - \ME \jmath^*[\beta] = \imath_* [\beta']
\]
for some $[\beta'] \in W(\perva{Y})$. We now show that $[\beta'] = \VC[\beta]$, or equivalently that $\VC \circ \ME =0$ on Witt groups. To see this recall that there is a functorial short exact sequence \cite[Corollary 7.2]{Morel}
\[
0\to \NC\to\VC\ME\to  \NC(-1) \to 0,
\]
so that the induced form  $\VC \ME[\beta]$ is metabolic. Therefore $\VC \ME[\beta]=0$ in the Witt group as claimed.
\end{proof}

We can relate the above decomposition to our earlier splitting results.
\begin{corollary}
\label{relating gluing and splitting}
For $[\beta: \mathcal{A}\to D(\mathcal{A})]\in W(\perva{U})$ the composite 
\[
\NC\beta \circ N \colon \NC \mathcal{A} \to \NC \mathcal{A}(-1)  \to D \NC \mathcal{A}
\] 
is symmetric and $[\jmath_{!*}\beta] = \ME [\beta] +\imath_* [\NC\beta \circ N]$. Similarly for $[\beta']\in W(\perva{X})$ we have
\[
[\imath^{!*}\beta'] = \VC [\beta'] - [\NC(\jmath^*\beta' )\circ N].
\]
\end{corollary}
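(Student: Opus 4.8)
The plan is to extract both formulae from the gluing-data description \eqref{gluing-intermediate} of the intermediate extension together with the direct-sum decomposition of Lemma~\ref{hypersurface splitting}. First I would establish the symmetry claim. We have $D\NC\mathcal{A} \cong \NC(D\mathcal{A})(-1)$, and $N \colon \NC\mathcal{A} \to \NC\mathcal{A}(-1)$ commutes with duality (choosing opposite generators of $\pi_1(\C^*,1)$ on the two sides, as recorded after \eqref{dual SESs2}). Writing out $\NC\beta \circ N$ and its dual $D(\NC\beta\circ N) = DN \circ D\NC\beta$, symmetry of $\beta$ (\ie $D\beta \circ \chi = \beta$) together with $DN \cong N$ and naturality of $N$ in its argument gives $D(\NC\beta\circ N)\circ\chi = \NC\beta\circ N$. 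This is a short diagram chase of exactly the same flavour as the symmetry verifications in \S\ref{categories with duality}.

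Next I would prove the first identity $[\jmath_{!*}\beta] = \ME[\beta] + \imath_*[\NC\beta\circ N]$. By Lemma~\ref{hypersurface splitting} the map $[\gamma]\mapsto(\jmath^*[\gamma],\VC[\gamma])$ is an isomorphism $W(\perva{X})\xrightarrow{\sim} W(\perva{U})\oplus W(\perva{Y})$ with inverse $([\gamma_0],[\gamma_1])\mapsto \ME[\gamma_0]+\imath_*[\gamma_1]$. So it suffices to compute $\jmath^*[\jmath_{!*}\beta]$ and $\VC[\jmath_{!*}\beta]$. Since $\jmath^*\jmath_{!*}\beta = \beta$ the first component is $[\beta]$. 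For the second, the gluing data \eqref{gluing-intermediate} says $\VC(\jmath_{!*}\mathcal{A}) = \im(N\colon \NC\mathcal{A}\to\NC\mathcal{A}(-1))$ with $\mathrm{can} = N$ (in factored form $\NC\mathcal{A}\xrightarrow{N}\im N\hookrightarrow\NC\mathcal{A}(-1)$) and $\mathrm{Var} = \mathrm{incl}$. I would check that under duality of gluing data the induced form $\VC(\jmath_{!*}\beta)$ on $\im N$ is precisely the \emph{non-degenerate form} $\overline{\NC\beta\circ N}$ associated to the (possibly degenerate) form $\NC\beta\circ N$ on $\NC\mathcal{A}$ --- indeed $\im N = \coim(\NC\beta\circ N)$ after identifying $D\NC\mathcal{A}\cong\NC\mathcal{A}(-1)$, because $\beta$ is an isomorphism. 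Hence $\VC[\jmath_{!*}\beta] = [\overline{\NC\beta\circ N}] = [\NC\beta\circ N]$ in $W(\perva{Y})$, and applying the inverse isomorphism yields $[\jmath_{!*}\beta] = \ME[\beta] + \imath_*[\NC\beta\circ N]$.

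For the second identity I would apply Theorem~\ref{sum thm} to $\beta'$, giving $[\beta'] = [\imath_*\imath^{!*}\beta'] + [\jmath_{!*}\jmath^*\beta']$, hence $\imath_*[\imath^{!*}\beta'] = [\beta'] - [\jmath_{!*}\jmath^*\beta']$. Substituting the first identity with $\beta$ replaced by $\jmath^*\beta'$ gives $\imath_*[\imath^{!*}\beta'] = [\beta'] - \ME[\jmath^*\beta'] - \imath_*[\NC(\jmath^*\beta')\circ N]$. Now apply $\VC$ to both sides: on the left $\VC\imath_* = \id$, so we get $[\imath^{!*}\beta'] = \VC[\beta'] - \VC\ME[\jmath^*\beta'] - \VC\imath_*[\NC(\jmath^*\beta')\circ N] = \VC[\beta'] - 0 - [\NC(\jmath^*\beta')\circ N]$, using $\VC\circ\ME = 0$ on Witt groups (from the functorial sequence $0\to\NC\to\VC\ME\to\NC(-1)\to 0$ in the proof of Lemma~\ref{hypersurface splitting}) and $\VC\circ\imath_* = \id$. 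This is the desired formula.

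The main obstacle I anticipate is the precise bookkeeping in the middle step: identifying the form $\VC(\jmath_{!*}\beta)$ with $\overline{\NC\beta\circ N}$ requires carefully tracking how Verdier duality acts on the quadruple $(\jmath^*\mathcal{B},\VC\mathcal{B},\mathrm{can},\mathrm{Var})$ --- in particular that duality swaps $\mathrm{can}$ and $\mathrm{Var}$ and that under this swap the self-duality $\jmath_{!*}\beta$ restricts on the vanishing-cycles piece to the non-degenerate reduction of $\NC\beta\circ N$ rather than to that form itself. The Tate-twist signs and the opposite-generator convention must be handled consistently throughout, exactly as flagged in the discussion following \eqref{dual SESs2}; once the conventions are fixed the identification is forced by non-degeneracy of $\beta$ and the explicit factorisation of $N$ in \eqref{gluing-intermediate}.
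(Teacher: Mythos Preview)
Your proposal is correct and follows essentially the same route as the paper: verify symmetry of $\NC\beta\circ N$ via $D\circ N\cong N\circ D$, use the gluing-data description \eqref{gluing-intermediate} to identify $\VC[\jmath_{!*}\beta]=[\NC\beta\circ N]$, invoke Lemma~\ref{hypersurface splitting} for the first identity, and then combine Theorem~\ref{sum thm} with the first identity for the second. The only cosmetic difference is that for the second identity the paper cancels the $\ME$ terms by writing $[\beta']=\ME\jmath^*[\beta']+\imath_*\VC[\beta']$ and subtracting, whereas you apply $\VC$ and use $\VC\circ\ME=0$; these are equivalent by Lemma~\ref{hypersurface splitting}. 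Your extra care in identifying $\VC(\jmath_{!*}\beta)$ with the non-degenerate reduction $\overline{\NC\beta\circ N}$ is a useful elaboration of a step the paper leaves implicit.
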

\begin{proof}
It is easy to verify that 
\[
\NC\beta \circ N \colon \NC \mathcal{A} \to \NC \mathcal{A}(-1)  \to\NC (D(\mathcal{A})) (-1) \cong  D \NC \mathcal{A}
\]
is symmetric, since $N:  \NC \mathcal{A} \to \NC \mathcal{A}(-1)$ commutes with duality. Moreover,  from the description of intermediate extensions in terms of gluing data (\ref{gluing-intermediate}) one gets
\[
\VC [\jmath_{!*}\beta] = [\NC\beta \circ N].
\]
Hence by Lemma \ref{hypersurface splitting} $[\jmath_{!*}\beta] = \ME [\beta] + \imath_* [\NC\beta \circ N]$ so that
\[
[\imath^{!*}\beta'] = [\beta'] - [\jmath_{!*}\jmath^*\beta'] = \VC [\beta'] - [\NC(\jmath^*\beta') \circ N]
\]
as claimed.
\end{proof}
\begin{remark}
An alternative method of proof is to verify that the first equation is the splitting relation arising from the short exact sequences (\ref{dual SESs1}) and (\ref{dual SESs2}).
The second is the splitting relation for the following two exact sequences of perverse sheaves which are exchanged by duality \cite[Proposition 6.2]{Morel}:
\[
\begin{tikzcd}
\quad \quad \quad  \quad\quad
\NC (\jmath^*\mathcal{B}) \ar{r}{\text{can}}& \VC \mathcal{B} \ar{r} & H^0(\imath^*\mathcal{B}) \ar{r} & 0
\end{tikzcd}
\]
and
\[
\begin{tikzcd}
0 \ar{r}& H^0(\imath^!\mathcal{B}) \ar{r} & \VC\mathcal{B} \ar{r}{\text{Var}} & \NC (\jmath^* \mathcal{B}) (-1),
\end{tikzcd}
\]
with $H^0$ the corresponding perverse cohomology. 
\end{remark}

\end{document}